\documentclass[12pt]{article}

\usepackage[T1]{fontenc}
\usepackage[utf8]{inputenc}
\usepackage[english]{babel}
\usepackage{courier}
\usepackage{graphicx}
\usepackage{float}
\usepackage{amsthm}
\usepackage{amsmath}
\usepackage{amssymb}
\usepackage{concrete}
\usepackage{geometry}
\usepackage{caption}
\usepackage{subcaption}
\usepackage{algorithm}
\usepackage{algorithmic}
\usepackage{tikz}
\usepackage{wrapfig}
\usepackage{multirow}
\usepackage{hyperref}
\usepackage{sidecap}
\usepackage{authblk}

\geometry{left = 3 cm, right = 1.5 cm, top = 2 cm, bottom = 2 cm}

\theoremstyle{plain}
\newtheorem{theorem}{Theorem}
\newtheorem{lemma}{Lemma}
\newtheorem{proposition}{Proposition}

\theoremstyle{definition}
\newtheorem{definition}{Definition}

\theoremstyle{remark}
\newtheorem{cons}{Corollary}

\newcommand{\rank}{\mathrm{rank}\kern 2pt}

\newcommand{\mathC}{\mathbb{C}}
\newcommand{\cV}{{\cal V}_2}
\newcommand{\cVr}{{\cal V}_2^r}

\tolerance=200 

\floatname{algorithm}{Algorithm}

\renewcommand{\geq}{\geqslant}
\renewcommand{\leq}{\leqslant}
\renewcommand{\ref}{\href}

\newcommand*{\myfont}{\fontfamily{phv}\selectfont}

\begin{document}

\title{Rectangular maximum volume and projective volume search algorithms.}

\author[1,2,3]{Osinsky~A.I. \thanks{a.osinskiy@skoltech.ru}}

\affil[1]{Moscow Institute of Physics and Technology, Dolgoprudny, Institutsky per., 9, Russia}
\affil[2]{Institute of Numerical Mathematics RAS, Moscow, Gubkina str., 8, Russia}
\affil[3]{Skolkovo Institute of Science and Technology Bolshoy Boulevard 30, bld. 1 Moscow, Russia}

\begin{abstract}
New methods for finding submatrices of (locally) maximal volume and large projective volume are proposed and studied.
Detailed analysis is also carried out for existing methods.
The effectiveness of the new methods is shown in the construction of cross approximations, and estimates are also proved in the case of their application for the search for a strongly nondegenerate submatrix.
Much attention is also paid to the choice of the starting submatrix.
\end{abstract}

\noindent{\it Keywords:}
Low-rank approximations; Pseudoskeleton approximations; Maximum volume principle  

\maketitle

\section{Introduction}

In this paper, we study algorithms aimed at finding submatrices of large volume and projective volume. The volume and the projective ($r$-projective) volume for an arbitrary matrix $A \in \mathC^{M \times N}$ are defined as follows:
\[
  V(A) = vol(A) = \cV(A) = \prod\limits_{i = 1}^{\min (M,N)} {{\sigma _i}(A)},
\]
\[
  \cVr(A) = \prod\limits_{i = 1}^{r} {{\sigma _i}(A)},
\]
where $\sigma_i(A)$ are singular values of $A$ in descending order (see Notations subsection).

The submatrices with close to the maximum volume (or projective volume) are known to yield precise cross approximations \cite{Cheb0, Cheb1, me}. 
Especially we pay attention to the projective volume, which guarantees better cross approximation accuracy estimates. High accuracy is confirmed in the numerical experiments.

An essential feature of the cross approximations is the use of a small part of matrix elements. This requires efficient algorithms for large volume (or large projective volume) submatrix search.

Since the maximum volume submatrix search is an NP-complete problem, in practice much simpler computational algorithms are used.
One such algorithm is called {\myfont maxvol} \cite{maxvol} (see algorithm \ref{maxvol-alg}). 
It finds a square submatrix of locally maximal volume \cite{Pan} also called dominant \cite{maxvol}. We will use the same definition for the rectangular submatrices.

\begin{definition}[\cite{Pan}, \cite{maxvol}]
A submatrix $\hat A \in \mathC^{m \times n}$ at the intersection of the rows $R \in \mathC^{M \times n}$ and columns $C \in \mathC^{m \times N}$ of the matrix $A \in \mathC^{M \times N}$ is called dominant or said to have a locally maximum volume if the volume of $\hat A$ does not increase by swapping one of its rows with another row from $C$ or one of its columns with another column from $R$.
\end{definition}

The notion of locally maximum projective volume is defined the same way: the projective volume should not increase with any single swap of rows or columns.

In practice, dominant submatrices are often in ``good'' rows and columns for the cross approximation. Moreover, there are estimates on the norm of the pseudoinverse matrix guaranteeing that dominant submatrices are relatively well conditioned.

Our algorithm {\myfont maxvol-rect} (algorithm \ref{maxvolrect-alg}) finds a dominant rectangular submatrix and it produces the same results as {\myfont maxvol} when searching for a square submatrix. This is the main difference of our algorithm from rectangular greedy maximum volume search algorithm {\myfont maxvol2} \cite{maxvol2} (see also algorithm \ref{maxvol2-alg} in section \ref{alg-sec}) later renamed as {\myfont rect\_maxvol} \cite{Mich}, which adds rows to increase the 2-volume (thus the original name) and never swaps them. We will stick to the original name when discussing this algorithm.

It was already used to construct cross approximations based on large projective volume in \cite{me}, and we will also use it that way. Later, in \cite{Mich}, another approach was proposed, but it requires the knowledge of $SVD$ and leads to an additional factor of two in the estimates, so we will not use it.

We also derive a faster version of {\myfont maxvol2}, which uses Householder reflections for the updates (Algorithm \ref{hmaxvol2-alg} in the Appendix). We will refer to the new version as ``Householder-based {\myfont maxvol2}'' while saying ``original {\myfont maxvol2}'' when referring to the original update formulas.

In this paper, we also consider another generalization of the maxvol algorithm called {\myfont maxvol-proj} (algorithm \ref{maxvolproj-alg}), which searches for large projective volume submatrices.

The main idea of the proposed algorithm for large projective volume is to search for ``good'' rows and columns separately.
Since the sought-for $m \times n$ submatrix is expected to be ``good'' only in the sense of its largest $r$ singular values, it is possible to start with $r \times n$ and $m \times r$ submatrices of large volume.

A choice of the starting submatrix is studied in detail, which provides a restriction on the number of row or column exchanges in the algorithms.

In the current section, we introduce the necessary notation and give Lemmas concerning the volume, projective volume, and dominant submatrices.

In section \ref{alg-sec}, the algorithms are constructed and briefly described. Detailed versions can be found in the appendix.

In section \ref{exp-sec}, the numerical experiments on random matrices are carried out. The results confirm the hypothesis of high low-rank approximation accuracy in the Frobenius norm.

\subsection{Notations}

Before proceeding further, we formulate the basic notation concerning column and cross approximations.

$A$ usually denotes the original matrix.
Its size is $M \times N,$ unless otherwise specified.

$\hat A$ is used to denote a submatrix of $A$, whose rows and columns generate a $CGR$-approxima\-tion (also called $CUR$ \cite{bestCUR}).

\begin{definition}
An approximation of the matrix $A \in \mathC^{M \times N}$ is called $CGR$ if it is described as a product of some of the columns $C \in \mathC^{M \times n}$ and rows $R \in \mathC^{m \times N}$ of the matrix $A$ and an arbitrary matrix $G \in \mathC^{m \times n}$ called a generator:
\[
  A \approx CGR.
\]
\end{definition}

Matrices $U$ and $V$ everywhere further contain orthonormal rows or columns.
Although they can be rectangular, we shall also call them ``unitary''. 
They often arise from the singular value decomposition for a matrix $A \in \mathC^{M \times N}$ of rank $R$ as follows:
\[
\begin{gathered}
  A = U\Sigma V, \hfill \\
  U \in {\mathbb{C}^{M \times R}}, \hfill \\
  \Sigma = diag({\sigma _1}, \ldots ,{\sigma _R}),\quad {\sigma _1} \geqslant {\sigma _2} \geqslant  \ldots  \geqslant {\sigma _R}, \hfill \\
  V \in {\mathbb{C}^{R \times N}}. \hfill \\ 
\end{gathered} 
\]
Then $\hat U \in \mathC^{n \times R}$ and $\hat V \in \mathC^{R \times n} $ denote some submatrices corresponding to $n$ rows $R$ and columns $C$ of $A$. $\sigma_i = \sigma_i(A)$ are singular values of the matrix $A$ in the descending order.

The approximation rank is denoted by $r$.

For an arbitrary matrix $B$ of rank at least $r$, the matrices $B_r$ and $B_r^+$ (the latter is called $r$ -pseudoinverse of $B$) are defined in terms of the singular value decomposition $B = U \Sigma V$ as follows:
\begin{eqnarray}
B_r  & = & U \Sigma_r V,\quad {\sigma _i}\left( {\Sigma_r } \right) = \left\{ \begin{array}{ll}
  \sigma _i \left( \Sigma  \right), & \text{if} \; i \leqslant r , \\
  0, & \text{otherwise}. \hfill \\ 
\end{array}  \right. \nonumber \\
B_r^+  & = & {V^*} \Sigma_r^+ U^*,\quad {\sigma _i}\left( {\Sigma_r ^+} \right) = \left\{ \begin{array}{ll}
  \sigma _i^{-1}\left( \Sigma  \right), & \text{if} \; i \leqslant r , \\
  0, & \text{otherwise}. \hfill \\ 
\end{array}  \right. \nonumber
\end{eqnarray}
Thus $B_r$ is the best rank-$r$ approximation of $B$ with respect to the 2-norm and the Frobenius norm. If $r$ coincides with the rank of $B$, we can write $B^+$ (exact pseudoinverse) instead of $B_r^+$.
$r$-pseudoinversion is further mainly used for submatrices of a large projective volume to construct a pseudo-skeleton approximation.

The range of natural numbers $\left\{ {1,2,3,...,n} \right\}$ is denoted by $\overline{1,n}$. The equality $\forall i = \overline{1,n}$ means that $i$ is an arbitrary value from the range.

\subsection{Properties of extreme submatrices}

This subsection describes the main properties of submatrices with a locally maximal volume or projective volume, proved in \cite{me}. 
Naturally, properties of the projective volume are also valid for volume.

\begin{lemma}[\cite{me}]\label{first-lem}
For an arbitrary $r \leqslant \min(m, n)$, the submatrix $\hat A \in \mathC^{m \times n}$ of the matrix $A = \left[ \begin{array}{cc} \hat A & b\end{array}\right] \in \mathC^{m \times (n+1)}$ with the locally maximal $r$-projective volume in $A$ and the submatrix $\hat U \in \mathC^{m \times n}$ of the matrix $U \in \mathC^{m \times N}, UU^* = I$ with the locally maximal $r$-projective volume in $U$, the following properties hold:
\begin{enumerate}
\item
For the $r$-projective volume of $\hat A$
\[
\cVr(\hat A) \sqrt{1 + \|\hat A_r^+ b\|_2^2} \leqslant \cVr(A) \leqslant \cVr(\hat A) \sqrt{\frac{n+1}{n-r+1}}.
\]
This implies
\[
\| \hat A_r^+ b \|_2^2 \leqslant \frac{r}{n - r + 1}.
\]
\item
For the norms of the $r$-pseudoinverse
\[
\|\hat{U}_r^+\|_2 \leqslant \sqrt{1 + \frac{r(N-n)}{n-r+1}},
\]
\[
\|\hat{U}_r^+\|_F \leqslant \sqrt{r + \frac{r(N-n)}{n-r+1}},
\]
\end{enumerate}
\end{lemma}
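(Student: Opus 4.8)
The plan is to prove the two items of the lemma in turn (only the upper bound in item~1 uses dominance), then deduce the bound on $\|\hat A_r^+b\|_2$ by combining the two bounds of item~1, and finally obtain item~2 by summing the item~1 estimate over the columns of $U$. For the lower bound in item~1, fix an SVD $\hat A=U_{\hat A}\Sigma_{\hat A}V_{\hat A}$ and put $W=U_1^*\in\mathC^{r\times m}$, where $U_1$ collects the first $r$ columns of $U_{\hat A}$, so $WW^*=I$. Since $W$ has orthonormal rows, $\cVr(A)\geqslant\mathrm{vol}(WA)=\mathrm{vol}([\,W\hat A\ \ Wb\,])$. A routine SVD computation shows that $W\hat A$ has full row rank $r$, that $\mathrm{vol}(W\hat A)=\cVr(\hat A)$, and that $(W\hat A)^+Wb=\hat A_r^+b$; inserting $X=W\hat A(W\hat A)^*$ and $y=Wb$ into the rank-one update $\det(X+yy^*)=\det(X)(1+y^*X^{-1}y)$ gives $\mathrm{vol}([\,W\hat A\ \ Wb\,])^2=\cVr(\hat A)^2(1+\|\hat A_r^+b\|_2^2)$, which is the claimed lower bound.

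For the upper bound, write $A_{\hat k}$ for $A\in\mathC^{m\times(n+1)}$ with its $k$-th column removed. The crux is the column-deletion inequality
\[
\sum_{k=1}^{n+1}\cVr(A_{\hat k})^2\;\geqslant\;(n+1-r)\,\cVr(A)^2 .
\]
To prove it, take an SVD $A=U\Sigma V$ with $VV^*=I$, set $D=\Sigma^2$ and $w_k=\Sigma v_k$, with $v_k$ the $k$-th column of $V$; then $\cVr(A_{\hat k})^2=\prod_{i=1}^r\lambda_i(D-w_kw_k^*)$ and $\sum_k w_kw_k^*=\Sigma VV^*\Sigma=D$. Let $P\in\mathC^{\rho\times r}$ span the top-$r$ eigenspace of $D$, so $\det(P^*DP)=\cVr(A)^2$; since $D-w_kw_k^*\succeq0$, Cauchy interlacing gives $\prod_{i=1}^r\lambda_i(D-w_kw_k^*)\geqslant\det(P^*(D-w_kw_k^*)P)=\det(P^*DP)(1-\tilde w_k^*(P^*DP)^{-1}\tilde w_k)$ with $\tilde w_k=P^*w_k$, and summing over $k$ while using $\sum_k\tilde w_k\tilde w_k^*=P^*DP$ collapses the correction to $\det(P^*DP)\,\tr(I_r)=r\,\cVr(A)^2$, proving the inequality. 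Now apply it to $A=[\,\hat A\ \ b\,]$: dominance of $\hat A$ says exactly that deleting any one of the $n$ columns of $\hat A$ while keeping $b$ does not increase $\cVr$, and $A_{\widehat{n+1}}=\hat A$, so $\sum_k\cVr(A_{\hat k})^2\leqslant(n+1)\cVr(\hat A)^2$; combined with the displayed inequality this yields $\cVr(A)^2\leqslant\frac{n+1}{n-r+1}\cVr(\hat A)^2$. Squaring and dividing the two bounds of item~1 then gives $1+\|\hat A_r^+b\|_2^2\leqslant\frac{n+1}{n-r+1}$, i.e. $\|\hat A_r^+b\|_2^2\leqslant\frac{r}{n-r+1}$.

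For item~2, apply item~1 with $\hat A=\hat U$ and $b$ running over the $N-n$ columns $u$ of $U$ that are not in $\hat U$: $\hat U$ is dominant in each $[\,\hat U\ \ u\,]$, hence $\|\hat U_r^+u\|_2^2\leqslant\frac{r}{n-r+1}$. For a column $u$ of $\hat U$ itself, $\hat U_r^+\hat U$ is the orthogonal projector onto the top-$r$ right singular subspace of $\hat U$, so $\sum_{u\in\hat U}\|\hat U_r^+u\|_2^2=\|\hat U_r^+\hat U\|_F^2=r$. Since $UU^*=I$ we have $\sum_{\text{cols }u}uu^*=I$, and therefore $\|\hat U_r^+\|_F^2=\tr((\hat U_r^+)^*\hat U_r^+)=\sum_u\|\hat U_r^+u\|_2^2\leqslant r+\frac{r(N-n)}{n-r+1}$, the Frobenius bound. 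Finally $\hat U\hat U^*\preceq UU^*=I$, so all singular values of $\hat U$ are $\leqslant1$ and all $r$ singular values of $\hat U_r^+$ are $\geqslant1$; hence $\|\hat U_r^+\|_2^2=\sigma_1(\hat U_r^+)^2\leqslant\|\hat U_r^+\|_F^2-(r-1)\leqslant1+\frac{r(N-n)}{n-r+1}$. The only genuinely non-routine point is the column-deletion inequality: for $r$ equal to the rank it is a Cauchy--Binet-type identity, but for $r$ smaller one must replace the identity by the lower bound obtained from compression onto the top-$r$ eigenspace of $\Sigma^2$, which is precisely what makes the correction terms telescope to $r\,\cVr(A)^2$.
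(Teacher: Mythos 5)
Your proof is correct. Note first that the paper itself does not prove Lemma \ref{first-lem}: it is imported verbatim from the cited reference \cite{me}, so there is no in-paper argument to compare against. Judged on its own, your write-up holds up at every step: the lower bound via compression $W=U_1^*$ onto the top-$r$ left singular subspace of $\hat A$ (using $\sigma_i(WA)\leqslant\sigma_i(A)$ and the matrix determinant lemma with $X=\Sigma_r^2$, $y=U_1^*b$) is sound and correctly uses no dominance; the column-deletion inequality $\sum_k\cVr(A_{\hat k})^2\geqslant(n+1-r)\cVr(A)^2$ is the real content of the upper bound, and your derivation via Cauchy interlacing applied to the compression $P^*(D-w_kw_k^*)P$ onto the top-$r$ eigenspace of $D=\Sigma^2$, with the correction terms telescoping to $r\,\cVr(A)^2$ because $\sum_k w_kw_k^*=D$, is exactly the right generalization of the Cauchy--Binet identity to $r<\rank A$; and item 2 follows correctly by summing the item-1 bound over the $N-n$ external columns and using $UU^*=I$ together with $\sigma_i(\hat U)\leqslant 1$ for the passage from the Frobenius to the spectral bound. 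This is essentially the same averaging-over-single-swaps strategy used in \cite{me}. The only thing worth making explicit is the standing assumption $\cVr(\hat A)>0$ (equivalently $\rank\hat A\geqslant r$), which you use implicitly when inverting $\Sigma_r$ and $P^*DP$; it is guaranteed by local maximality whenever $\rank A\geqslant r$, and the statement is vacuous otherwise.
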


The second property is especially important since the cross approximation accuracy is affected by the properties of some submatrices of unitary matrices.

The next theorem shows that the locally maximal volume of long rectangular submatrices is close to the globally maximal volume, which may be false for square submatrices.

\begin{theorem}\label{maxvol2-cons}
Let $\hat A \in \mathbb{C}^{n \times r}$ be a submatrix of $A \in \mathbb{C}^{N \times r}$ with the maximal (nonzero) volume among all the submatrices, which differ from  $\hat A$  in a single row. 
Let $A_M \in \mathbb{C}^{n \times r}$ be a submatrix with the maximal volume in $A$. Then
\begin{equation}\label{maxvol2-cons_res}
{\cV}\left( {{A_M}} \right) \leqslant {\cV}\left( {\hat A} \right) \cdot \left( \frac{n+1}{n-r+1} \right)^{r/2} .
\end{equation}
\end{theorem}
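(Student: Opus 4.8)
\emph{Reduction to a trace estimate.} The plan is to reduce \eqref{maxvol2-cons_res} to an upper bound on the trace of a single positive semidefinite $r\times r$ matrix, and then to derive that bound from the swap-optimality of $\hat A$ together with Lemma~1. We may assume $\cV(A_M)>0$, otherwise there is nothing to prove; then $\hat A$ and $A_M$ have full column rank $r$, in particular $n\ge r$. For an index $k$ write $\rho_k\in\mathC^{r}$ for the $k$-th row of $A$ regarded as a column vector, and let $I,J\subseteq\overline{1,N}$ be the $n$-element row index sets of $\hat A$ and $A_M$. Set $G=\hat A^{*}\hat A=\sum_{i\in I}\rho_i\rho_i^{*}$ (invertible) and $p_{k\ell}=\rho_k^{*}G^{-1}\rho_\ell$. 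Since $\cV(A_M)^{2}=\det(A_M^{*}A_M)$ and $\cV(\hat A)^{2}=\det G$, the matrix $M:=G^{-1/2}(A_M^{*}A_M)G^{-1/2}$ is positive semidefinite of order $r$ with $\det M=\cV(A_M)^{2}/\cV(\hat A)^{2}$ and $\tr M=\tr\!\big(G^{-1}\sum_{j\in J}\rho_j\rho_j^{*}\big)=\sum_{j\in J}p_{jj}$. By the AM--GM inequality applied to the eigenvalues of $M$ one has $\det M\le(\tr M/r)^{r}$, so it suffices to prove
\begin{equation}\label{trgoal}
\sum_{j\in J}p_{jj}\;\le\;\frac{r(n+1)}{n-r+1}.
\end{equation}

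\emph{The trace estimate.} First, $\sum_{i\in I}p_{ii}=\tr(G^{-1}G)=r$. Next fix $j\notin I$: appending the row $\rho_j^{*}$ to $\hat A$ gives an $(n+1)\times r$ matrix in which $\hat A$ has locally maximal $r$-projective volume — this is exactly the single-row-swap optimality of $\hat A$, because for a matrix with at least $r$ rows the volume coincides with the $r$-projective volume — so the upper bound of Lemma~1, part~1 (used on the transposed matrix, with $n$ as the column count) gives $\cV(\hat A)^{2}(1+p_{jj})=\det(G+\rho_j\rho_j^{*})\le\cV(\hat A)^{2}\,\frac{n+1}{n-r+1}$, that is, $p_{jj}\le\frac{r}{n-r+1}$ for every $j\notin I$. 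Moreover, replacing the row of $\hat A$ indexed by $i\in I$ with the row $\rho_j^{*}$ ($j\notin I$) multiplies $\cV(\hat A)^{2}$ by $(1-p_{ii})(1+p_{jj})+|p_{ij}|^{2}$ (a rank-two determinant identity); by optimality this factor is at most $1$, hence $(1-p_{ii})(1+p_{jj})\le1$, which rearranges — with no division involved — to $p_{jj}\le p_{ii}(1+p_{jj})\le\frac{n+1}{n-r+1}\,p_{ii}$ for all $i\in I$ and $j\notin I$. Put $s=|J\setminus I|=|I\setminus J|$ and $\mu=\min_{i\in I}p_{ii}\ge0$. Splitting $J=(I\cap J)\cup(J\setminus I)$ and using $\sum_{i\in I}p_{ii}=r$,
\begin{equation}\label{trsplit}
\sum_{j\in J}p_{jj}=r-\sum_{i\in I\setminus J}p_{ii}+\sum_{j\in J\setminus I}p_{jj}.
\end{equation}
Here $\sum_{i\in I\setminus J}p_{ii}\ge s\mu$ (each summand is $\ge\mu$) and $\sum_{j\in J\setminus I}p_{jj}\le s\cdot\frac{n+1}{n-r+1}\mu$ (each summand is $\le\frac{n+1}{n-r+1}\mu$), so the right-hand side of \eqref{trsplit} is at most $r+s\mu\big(\frac{n+1}{n-r+1}-1\big)=r+\frac{r\,s\mu}{n-r+1}$; since $s\mu\le\sum_{i\in I\setminus J}p_{ii}\le r$, this is $\le r+\frac{r^{2}}{n-r+1}=\frac{r(n+1)}{n-r+1}$, which is \eqref{trgoal}. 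Taking square roots in $\det M\le(\tr M/r)^{r}\le\big(\frac{n+1}{n-r+1}\big)^{r}$ yields \eqref{maxvol2-cons_res}; in fact the same argument bounds the volume of every $n\times r$ submatrix of $A$, not only of $A_M$.

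\emph{The main obstacle.} All the content sits in \eqref{trgoal}, and the naive routes miss it: appending the missing rows of $A_M$ to $\hat A$ one by one and using only $p_{jj}\le\frac{r}{n-r+1}$ gives the exponent $|J\setminus I|$, which can be as large as $n$, and bounding $\sum_{j\in J}p_{jj}$ term-by-term inflates the exponent by a factor of order $n/r$. What makes the bound tight is that the incoming leverage $\sum_{j\in J\setminus I}p_{jj}$ is charged against the outgoing leverage $\sum_{i\in I\setminus J}p_{ii}$ through the swap inequality $p_{jj}\le\frac{n+1}{n-r+1}p_{ii}$: each of the $|I\setminus J|$ discarded rows must carry leverage at least $\mu$, and $|I\setminus J|=|J\setminus I|$, which is exactly the cancellation in \eqref{trsplit}. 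A minor point to verify is that the rank-two determinant update and the inequalities drawn from it remain valid without assuming $p_{ii}<1$, since no step divides by $1-p_{ii}$.
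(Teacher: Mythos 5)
Your proof is correct and follows essentially the same route as the paper's: both reduce the claim to the trace bound $\sum_{j\in J}p_{jj}=\|A_M\hat A^{+}\|_F^{2}\le \frac{r(n+1)}{n-r+1}$, obtain it by charging the leverage of incoming rows against that of discarded rows via the single-swap identity $(1-p_{ii})(1+p_{jj})+|p_{ij}|^{2}\le 1$ together with Lemma~\ref{first-lem}, and finish with AM--GM on the singular values. The only differences are cosmetic — you parametrize by $\mu=\min_{i\in I}p_{ii}$ instead of the maximal outgoing leverage $x$, which lets you avoid the paper's separate treatment of the cases $k=0$ and $k>0$.
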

\begin{proof}
Without loss of generality let the rows of $\hat A$ coincide with the first $n$ rows of $A$.
In justifying the {\myfont Dominant-C} algorithm (algorithm \ref{dominantc-alg}) in section~\ref{alg-sec}, we show the following. 
If the $j$-th row of $\hat A$ is replaced by the $i$-th row of $A$ then the ratio of the new volume $V_{new}$ to the old one $V_{old} = \cV(\hat A)$ equals
\begin{equation}\label{vol_div_pre}
  V_{new} / V_{old} = |C_{i,j}|^2 + (1 + \| C_i \|_2^2)(1 - \|C_j\|_2^2),
\end{equation}
where $C = A \hat A^+ \in \mathbb{C}^{N \times n}$.

Every submatrix of $C$ now has rank $r$, so it is natural to search for the maximal $r$-projective volume submatrix in $C$. Moreover, it corresponds to the maximal volume submatrix in $A$. Let an arbitrary submatrix $\tilde C \in \mathC^{n \times n}$ of $C$ correspond to the submatrix $\tilde A \in \mathC^{n \times r}$ in $A$. With $QR$ factorization of $\hat A$: $\hat A = Q R \in \mathC^{n \times r}$, $R \in \mathC^{r \times r}$, we see that
\[
  \cVr(\tilde C) = \cVr(\tilde A \hat A^+) = \cVr(\tilde A R^{-1} Q^*) = \cVr(\tilde A R^{-1}) = \cV(\tilde A R^{-1}) = \cV(\tilde A) \cV(R^{-1}) = \cV(\tilde A) \cV(\hat A^+).
\]
So all volumes are just multiplied by $\cV(\hat A^+)$. We denote the submatrices in $C$, corresponding to $\hat A$ and $A_M$ as $\hat C$ and $C_M$.

So now we have some matrix $C$, which contains submatrix $\hat C$ with locally maximum projective volume and submatrix $C_M$ with the maximum projective volume. Our task will be to estimate $\|C_M\|_F$ in this matrix. We are not going to make any swaps of the rows in $A$ or $C$ and so will not change $C$ in any way.

First, we estimate the squared 2-norm of the $j$-th row of $C$, denoted by $l_j$. Let $x = \mathop{max} \limits_{i} l_i$. 
Since we start from the locally maximum volume submatrix, the right hand side in (\ref {vol_div_pre}) does not exceed 1, which means the second term $(1+\|C_i\|_2^2)(1 - \|C_j\|_2^2) = (1+l_i)(1-l_j)$ is also not greater than one. With substitution $l_i = x$ we find that
\[
  \forall j = \overline{1,n}: \; \;(1-l_j)(1+x) \leqslant 1
\]
and therefore
\begin{equation}\label{lj_min}
  \forall j = \overline{1,n}: \; \; l_j \geqslant \frac{x}{1+x}.
\end{equation}

Let $A_M$ contain $k > 0$ rows from $\hat A$. The sum of the squared lengths of all rows in $C$ corresponding to $\hat A$ (that is, rows of $\hat C$) equals $r$. On the other hand, the sum of squared 2-norms of the rest $n - k$ rows, according to (\ref{lj_min}) is at least $(n - k) \frac{x}{1 + x}$.
So the Frobenius norm of the shared rows is at most
\[
  r - (n-k) \frac{x}{1+x}.
\]

Then the squared Frobenius norm of $C_M$ is bounded by
\[
  \|C_M\|_F^2 \leqslant (n-k)x + (r - (n-k)\frac{x}{1+x}) = r - (n-k)\frac{x^2}{1+x}.
\]  

The maximum of this expression is achieved at $k = 1$.
Then
\[
  \| C_M \|_F^2 \leqslant r+\frac{x^2(n-1)}{1+x}.
\]

This expression grows with increasing $x$. Choosing the maximum according to item 1 of the Lemma \ref{first-lem} value $x = \frac{r}{n - r + 1}$, we get
\[
  \| C_M \|_F^2 \leqslant r+\frac{r^2(n-1)}{(n-r+1)(n+1)} \leqslant r\frac{n+1}{n-r+1}.
\]

In the case $k = 0$, using item 1 of the Lemma \ref{first-lem} again for $n$ rows of $C,$ we obtain
\[
  \| C_M \|_F^2 \leqslant n\frac{r}{n-r+1},
\]
which is lower than the previous estimate.

The maximum projective volume is achieved in case of equal $r$ singular values.
This means they are all equal to $\sqrt{\frac{n + 1}{n-r + 1}}$, which proves (\ref{maxvol2-cons_res}).
\end{proof}

\begin{cons}
For $n \geqslant \frac{r^2}{2} + r - 1$, the volume ratio of the maximum volume submatrix to any locally maximum volume submatrix is bounded by $(1+2/r)^{r/2} \leqslant e = const$.
\end{cons}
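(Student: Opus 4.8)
The plan is to insert the hypothesis $n \geq \frac{r^2}{2} + r - 1$ directly into the bound of Theorem \ref{maxvol2-cons}. First I would note that in the situation of that theorem the ambient matrix $A \in \mathbb{C}^{N \times r}$ has exactly $r$ columns, so every $n \times r$ submatrix already uses all columns of $A$; hence no column exchange is possible, and for such a submatrix ``locally maximum volume'' (dominance) means precisely that the volume cannot be increased by replacing one row, which is exactly the hypothesis imposed on $\hat A$ in Theorem \ref{maxvol2-cons}. Thus that theorem applies to any dominant $\hat A$ of nonzero volume and yields
\[
  \frac{\cV(A_M)}{\cV(\hat A)} \leq \left( \frac{n+1}{n-r+1} \right)^{r/2}.
\]

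Next I would bound the base. Since $n \geq r$, both $n+1$ and $n-r+1$ are positive, so $\frac{n+1}{n-r+1} \leq 1 + \frac{2}{r}$ is equivalent to $r(n+1) \leq (r+2)(n-r+1)$, i.e.\ to $0 \leq 2n - r^2 - 2r + 2$, i.e.\ to $n \geq \frac{r^2}{2} + r - 1$. As this is exactly our assumption, we obtain $\left( \frac{n+1}{n-r+1} \right)^{r/2} \leq \left( 1 + \frac{2}{r} \right)^{r/2}$, which is the first asserted bound. Finally, applying $1 + x \leq e^x$ with $x = 2/r$ gives $\left( 1 + \frac{2}{r} \right)^{r/2} \leq \left( e^{2/r} \right)^{r/2} = e$, a constant independent of $r$ and $n$.

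There is essentially no hard step: the argument is a substitution followed by the elementary inequality $1+x \le e^x$. The only point deserving a moment's attention is the reduction carried out in the first paragraph, namely that for a tall $N \times r$ matrix a dominant $n \times r$ submatrix is automatically optimal with respect to single-row exchanges; this is immediate once one observes that all $r$ columns are necessarily selected, so the ``column'' part of the dominance condition is vacuous and Theorem \ref{maxvol2-cons} is directly applicable.
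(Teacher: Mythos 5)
Your proof is correct and matches the paper's (implicit) argument: the corollary is stated as an immediate consequence of Theorem \ref{maxvol2-cons}, obtained exactly as you do by checking that $n \geqslant \frac{r^2}{2}+r-1$ is equivalent to $\frac{n+1}{n-r+1} \leqslant 1+\frac{2}{r}$ and then applying $1+x \leqslant e^x$. Your preliminary observation that the column part of the dominance condition is vacuous for an $N \times r$ matrix is a sensible clarification but does not change the substance.
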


At last, we give one more significant property.
\begin{lemma}[\cite{LowerBounds}]\label{for-lem}
For an arbitrary matrix $A \in \mathbb{C}^{M \times N}$ and any $r \leqslant \min(M, N)$, the following identity holds
\begin{equation}\label{lb_sig-eq}
\sum\limits_{T,\;|T|=r} {\cal V}^2(A_T) 
= \sum\limits_{1 \leqslant t_1 < t_2 < \cdots < t_r \leqslant \min\left\{M, N\right\}} 
\sigma^2_{t_1} \sigma^2_{t_2} \cdots \sigma^2_{t_r} \geqslant \left( \cVr(A) \right)^2, 
\end{equation}
where the sum is taken over all sets of $r$ indices $T = \left \{t_1, \ldots, t_r \right \}$.
$A_T$ are the rows of the matrix $A$ with indices from $T,$ and $\sigma_1 \geqslant \sigma_2 \geqslant \cdots \geqslant \sigma_{\min \{M, N \}}$ are the singular values of the matrix $A.$
\end{lemma}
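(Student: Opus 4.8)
The plan is to recognize the identity in the lemma as a (double) application of the Cauchy--Binet formula. Since $r \leqslant \min(M,N)$, for every index set $T$ with $|T| = r$ the block $A_T \in \mathC^{r \times N}$ has at most $r$ nonzero singular values, so ${\cal V}^2(A_T) = \prod_{i=1}^{r}\sigma_i^2(A_T) = \det(A_T A_T^*)$. If $R := \rank A < r$, then each such determinant vanishes, every product $\sigma_{t_1}^2 \cdots \sigma_{t_r}^2$ on the right contains a zero factor, and $\cVr(A) = 0$; thus the statement reduces to the trivial $0 = 0 \geqslant 0$, and I may assume $R \geqslant r$ from now on.

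Next I would substitute the singular value decomposition $A = U \Sigma V$ with $U \in \mathC^{M \times R}$, $U^* U = I_R$, $V \in \mathC^{R \times N}$, $VV^* = I_R$, $\Sigma = diag(\sigma_1, \ldots, \sigma_R)$. Then $A_T = U_T \Sigma V$ and, using $VV^* = I_R$,
\[
  A_T A_T^* = U_T \Sigma^2 U_T^* = (U_T \Sigma)(\Sigma U_T^*).
\]
Applying Cauchy--Binet to this product of an $r \times R$ and an $R \times r$ matrix, together with the fact that $\Sigma$ is diagonal, gives
\[
  \det(A_T A_T^*) = \sum_{S \subseteq \overline{1,R},\ |S| = r} \Bigl(\prod_{s \in S}\sigma_s^2\Bigr)\, \bigl|\det (U_T)_{:,S}\bigr|^2 ,
\]
where $(U_T)_{:,S}$ denotes the $r \times r$ minor of $U$ with row set $T$ and column set $S$. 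Summing over all $T$ and interchanging the two finite sums,
\[
  \sum_{|T| = r}{\cal V}^2(A_T) = \sum_{|S| = r}\Bigl(\prod_{s \in S}\sigma_s^2\Bigr)\sum_{|T| = r}\bigl|\det (U_T)_{:,S}\bigr|^2 = \sum_{|S| = r}\Bigl(\prod_{s \in S}\sigma_s^2\Bigr)\det\bigl((U_{:,S})^* U_{:,S}\bigr),
\]
the last equality being Cauchy--Binet once more (the Gram-determinant expansion). Since $U$ has orthonormal columns, $(U_{:,S})^* U_{:,S} = I_r$ and each inner factor is $1$; enlarging the range of $S$ from $\overline{1,R}$ to $\overline{1,\min\{M,N\}}$ only appends terms with a vanishing singular value, which yields the claimed identity.

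The inequality is then immediate: the summand with $T = \overline{1,r}$ equals $\sigma_1^2 \cdots \sigma_r^2 = (\cVr(A))^2$, and every remaining summand is nonnegative. I expect the only steps requiring attention to be the bookkeeping in the two Cauchy--Binet expansions --- in particular splitting the diagonal factors $\sigma_s$ symmetrically between $U_T \Sigma$ and $\Sigma U_T^*$ so that the two occurring minors of $U$ are complex conjugates of one another and combine into $|\det (U_T)_{:,S}|^2$ --- and the degenerate case $\rank A < r$; neither is a genuine obstacle, so this is essentially a clean ``expand and simplify'' argument.
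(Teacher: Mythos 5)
Your proof is correct. Note that the paper does not actually prove this lemma: it is imported from \cite{LowerBounds}, and the only comment the text offers is that the inequality in (\ref{lb_sig-eq}) follows by ``taking into account only the first summand,'' which is exactly the closing step of your argument. Your double Cauchy--Binet computation --- writing $\mathcal{V}^2(A_T)=\det(A_TA_T^*)$, expanding $\det(U_T\Sigma^2U_T^*)$ over column sets $S$ to pull out $\prod_{s\in S}\sigma_s^2\,|\det(U_T)_{:,S}|^2$, then summing over $T$ and collapsing $\sum_T|\det(U_T)_{:,S}|^2=\det\bigl((U_{:,S})^*U_{:,S}\bigr)=1$ by orthonormality --- is the standard self-contained derivation of this identity, and your handling of the degenerate case $\rank A<r$ and of the extension of the index range beyond $\rank A$ is sound. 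The only cosmetic slip is in the last paragraph: the summand equal to $(\cVr(A))^2$ is the term of the \emph{middle} sum with $\{t_1,\dots,t_r\}=\{1,\dots,r\}$, not a summand indexed by a row set $T$ of the left-hand sum; the intended meaning is clear and the inequality stands.
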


The inequality in (\ref{lb_sig-eq}) is obtained by taking into account only the first summand.

\section{Algorithms}\label{alg-sec}

The primary intention of this section is to obtain and to justify new algorithms. In particular, an algorithm for finding dominant rectangular submatrices is discussed.
We estimate the iteration numbers and the computational complexity of each algorithm.
The complete descriptions with update formulas are given in the Appendix.

All the basic algorithms use fast updates to replace one of the rows or columns of the current submatrix. The replacements are performed to increase the volume.
The criteria for such exchanges are given in the current section.

Figure \ref{scheme} shows the general scheme for large volume and projective volume submatrix search. All the algorithms are enumerated according to the sequence of their execution. In practice, of course, some of the steps (algorithms) can be omitted. However, additional steps can improve approximation quality and provide guarantees on the total complexity. These guarantees and estimates are discussed in the corresponding subsections. Also, algorithms on the bottom of the diagram are derived similarly to the simpler algorithms before them, so it is convenient to talk about simpler approaches first and then move on to more complex ones.

Let us briefly describe the general approach. We start from some $r$ nondegenerate columns (which can be chosen randomly), and we want to start from a highly nondegenerate $k \times r$ submatrix in these columns to reduce the number of steps for further algorithms. This is the purpose of {\myfont pre-maxvol} (algorithm \ref{premaxvol-alg}), described in subsection \ref{premaxvolsubsec}. Essentially, it just uses $QR$ with pivoting. Then we can find $r \times r$ locally maximum volume submatrix with {\myfont maxvol} \cite{maxvol} (algorithm \ref{maxvol-alg}), described in the subsection \ref{maxvolsubsec}.

If we want to find a locally maximum volume rectangular submatrix, then we, first of all, need to add some rows or columns using {\myfont maxvol2} \cite{maxvol2} (algorithm \ref{maxvol2-alg}), described in the subsection \ref{maxvol2subsec}. Note also, that we present an asymptotically faster analog in the Appendix.

Next we move on to the new algorithms.

To find locally maximum volume rectangular submatrix, we use {\myfont maxvol-rect} (algorithm \ref{maxvolrect-alg}), described in the subsection \ref{maxvolrectsubsec}. It uses different criteria for updates along smaller and larger size, described in {\myfont Dominant-C} (algorithm \ref{dominantc-alg}) and {\myfont Dominant-R} (algorithm \ref{dominantr-alg}). Note that {\myfont Dominant-R} uses the same criterion and similar updates to the strong rank revealing $QR$, described in \cite{rrqr}.

Finally, in the subsection \ref{maxvolprojsubsec}, rectangular locally maximum volume submatrices are used to find large projective volume submatrix of arbitrary size (see {\myfont maxvol-proj}, algorithm \ref{maxvolproj-alg}).

After all the algorithms are presented, we discuss possible improvements and simplifications in the subsection \ref{impsubsec} and make a comparison with other subset selection algorithms in the subsection \ref{appsubsec}.

\begin{figure}[H]
\center
\vspace{-1cm}
\begin{tikzpicture}

	\makeatletter
		\let\cs\@undefined
	\makeatother

	\newlength{\cs} 
	\setlength{\cs}{0.8cm}
	
    \node [align = center] at (1\cs, -11\cs) {\Large maxvol\\ \Large -rect\\ \large Alg. \ref{maxvolrect-alg}};	
    \node [align = center] at (-3.3\cs, -15.6\cs) {\large maxvol\\ \large -proj \\ \large Alg. \ref{maxvolproj-alg}};	
    \node [align = center] at (1\cs, -4\cs) {\Large maxvol2 \\ \large Alg. \ref{maxvol2-alg}};
    
    \node at (-7\cs, 5.3\cs) {\large Alg. \ref{premaxvol-alg}};
    \node at (2\cs, 5.3\cs) {\large Alg. \ref{maxvol-alg}};
    \node at (-6.8\cs, -8.6\cs) {\large Alg. \ref{dominantc-alg}};
    \node at (4.2\cs, -8.6\cs) {\large Alg. \ref{dominantr-alg}};
    
    \node [circle, thick, draw, minimum size = 30mm] at (1\cs, -4\cs) {};
    \node [circle, thick, draw, minimum size = 30mm] at (1\cs, -11\cs) {};
	
	\node at (-6.3\cs, 6\cs) {\large pre-maxvol };
	\node at (2.1\cs, 6\cs) {\large maxvol };
	\node at (-6\cs, -8\cs) {\large Dominant-C };
	\node at (5\cs, -8\cs) {\large Dominant-R };
	
	
	\draw [thick, rounded corners]	(-8\cs, 6.5\cs) rectangle +(9\cs, -7.5\cs);
	
	\foreach \x in {-4\cs}
		\foreach \y in {0.4\cs}
	{\Large
	
		\draw [thick] (\x, \y) rectangle +(2\cs, 5\cs);
		\draw [thick] (\x, \y + 4\cs) rectangle +(2\cs, 1\cs);

        \node at (\x + 1\cs, \y + 4.5\cs) {\( \downarrow \)};
		
		\node at (\x + 1\cs, \y + 5.7\cs) {\( r \)};
		\node at (\x - 0.7\cs, \y + 4.5\cs) {\( k \)};
		\node at (\x + 2.7\cs, \y + 2.5\cs) {\( M \)};
		
		\draw [|<->|, very thick] (\x, \y + 5.2\cs) -- (\x + 2\cs, \y + 5.2\cs);
		\draw [|<->|, very thick] (\x - 0.2\cs, \y + 4\cs) -- (\x - 0.2\cs, \y + 5\cs);
		
		\draw [|<->|, very thick] (\x + 2.2\cs, \y) -- (\x + 2.2\cs, \y + 5\cs);
	}
	
	
	\draw [thick, rounded corners]	(1.0\cs, 6.5\cs) rectangle +(9.5\cs, -7.5\cs);

	\foreach \x in {4.5\cs}
		\foreach \y in {0.4\cs}
	{\Large
		\draw [thick] (\x, \y) rectangle +(5\cs, 5\cs);
		\draw [thick] (\x + 1\cs, \y) rectangle +(1\cs, 5\cs);
		\draw [thick] (\x + 3\cs, \y) rectangle +(1\cs, 5\cs);
		\draw [thick] (\x, \y + 3.5\cs) rectangle +(5\cs, 1\cs);
		\draw [thick] (\x, \y + 1.5\cs) rectangle +(5\cs, 1\cs);

        \node at (\x + 3.5\cs, \y + 4\cs) {\( \hat A \)};
		\node at (\x + 1.5\cs, \y + 2\cs) {\( \hat A \)};
		\node at (\x + 3.5\cs, \y + 2\cs) {\( \hat A \)};
		\node at (\x + 3.5\cs, \y + 3\cs) {\( \downarrow \)};
		\node at (\x + 2.5\cs, \y + 2\cs) {\( \leftarrow \)};
		
		\node at (\x + 1.5\cs, \y + 5.7\cs) {\( r \)};
		\node at (\x + 5.7\cs, \y + 2\cs) {\( r \)};
		\node at (\x - 0.7\cs, \y + 2.5\cs) {\( M \)};
		\node at (\x + 2.5\cs, \y - 0.7\cs) {\( N \)};
		
		\draw [|<->|, very thick] (\x + 1\cs, \y + 5.2\cs) -- (\x + 2\cs, \y + 5.2\cs);
		\draw [|<->|, very thick] (\x + 5.2\cs, \y + 1.5\cs) -- (\x + 5.2\cs, \y + 2.5\cs);
		\draw [|<->|, very thick] (\x - 0.2\cs, \y) -- (\x - 0.2\cs, \y + 5\cs);
		\draw [|<->|, very thick] (\x, \y - 0.2\cs) -- (\x + 5\cs, \y - 0.2\cs);
	}
	
	\draw [thick, rounded corners]	(-8\cs, -1\cs) rectangle +(18.5\cs, -6.5\cs);
	
	
	\foreach \x in {-4\cs}
		\foreach \y in {-7.1\cs}
	{\Large
		\draw [thick] (\x, \y) rectangle +(1\cs, 5\cs);
		\draw [thick] (\x, \y + 3\cs) rectangle +(1\cs, 2\cs);
		\draw [thick] (\x, \y + 4\cs) rectangle +(1\cs, 1\cs);

        \node at (\x + 0.5\cs, \y + 4.5\cs) {\( \hat A \)};
        \node at (\x + 0.5\cs, \y + 3.5\cs) {\( \downarrow \)};
		
		\node at (\x + 0.5\cs, \y + 5.7\cs) {\( r \)};
		\node at (\x - 0.7\cs, \y + 4\cs) {\( n \)};
		\node at (\x + 1.7\cs, \y + 2.5\cs) {\( M \)};
		
		\draw [|<->|, very thick] (\x, \y + 5.2\cs) -- (\x + 1\cs, \y + 5.2\cs);
		\draw [|<->|, very thick] (\x - 0.2\cs, \y + 3\cs) -- (\x - 0.2\cs, \y + 5\cs);
		
		\draw [|<->|, very thick] (\x + 1.2\cs, \y) -- (\x + 1.2\cs, \y + 5\cs);
	}
	
	
	\foreach \x in {9.5\cs}
		\foreach \y in {-5.1\cs}
	{\Large
		\draw [thick] (\x, \y) rectangle +(-5\cs, 1\cs);
		\draw [thick] (\x - 3\cs, \y) rectangle +(-2\cs, 1\cs);
		\draw [thick] (\x - 4\cs, \y) rectangle +(-1\cs, 1\cs);

        \node at (\x - 4.5\cs, \y + 0.5\cs) {\( \hat A \)};
        \node at (\x - 3.5\cs, \y + 0.5\cs) {\( \rightarrow \)};
		
		\node at (\x - 5.7\cs, \y + 0.5\cs) {\( r \)};
		\node at (\x - 4\cs, \y - 0.7\cs) {\( n \)};
		\node at (\x - 2.5\cs, \y + 1.7\cs) {\( N \)};
		
		\draw [|<->|, very thick] (\x - 5.2\cs, \y) -- (\x - 5.2\cs, \y + 1\cs);
		\draw [|<->|, very thick] (\x - 3\cs, \y - 0.2\cs) -- (\x - 5\cs, \y - 0.2\cs);
		
		\draw [|<->|, very thick] (\x, \y + 1.2\cs) -- (\x - 5\cs, \y + 1.2\cs);
	}
	
	\draw [thick, rounded corners]	(-8\cs, -7.5\cs) rectangle +(18.5\cs, -7\cs);
	
	
	\foreach \x in {-4\cs}
		\foreach \y in {-14\cs}
	{\Large
		\draw [thick] (\x, \y) rectangle +(1\cs, 5\cs);
		\draw [thick] (\x, \y + 3.5\cs) rectangle +(1\cs, 1.5\cs);
		\draw [thick] (\x, \y + 1\cs) rectangle +(1\cs, 1.5\cs);

        \node at (\x + 0.5\cs, \y + 4.25\cs) {\( \hat A \)};
        \node at (\x + 0.5\cs, \y + 1.75\cs) {\( \hat A \)};
        \node at (\x + 0.5\cs, \y + 3\cs) {\( \downarrow \)};
		
		\node at (\x + 0.5\cs, \y + 5.7\cs) {\( r \)};
		\node at (\x - 0.7\cs, \y + 4.25\cs) {\( n \)};
		\node at (\x + 1.7\cs, \y + 2.5\cs) {\( M \)};
		
		\draw [|<->|, very thick] (\x, \y + 5.2\cs) -- (\x + 1\cs, \y + 5.2\cs);
		\draw [|<->|, very thick] (\x - 0.2\cs, \y + 3.5\cs) -- (\x - 0.2\cs, \y + 5\cs);
		
		\draw [|<->|, very thick] (\x + 1.2\cs, \y) -- (\x + 1.2\cs, \y + 5\cs);
	}
	
	
	\foreach \x in {4.5\cs}
		\foreach \y in {-12\cs}
	{\Large
		\draw [thick] (\x, \y) rectangle +(5\cs, 2\cs);
		\draw [thick] (\x, \y) rectangle +(1\cs, 2\cs);
		\draw [thick] (\x + 2\cs, \y) rectangle +(1\cs, 2\cs);

        \node at (\x + 0.5\cs, \y + 1\cs) {\( \hat A \)};
        \node at (\x + 1.5\cs, \y + 1\cs) {\( \rightarrow \)};
        \node at (\x + 2.5\cs, \y + 1\cs) {\( \hat A \)};
		
		\node at (\x - 0.7\cs, \y + 1\cs) {\( m \)};
		\node at (\x + 0.5\cs, \y + 2.7\cs) {\( r \)};
		\node at (\x + 2.5\cs, \y - 0.7\cs) {\( N \)};
		
		\draw [|<->|, very thick] (\x - 0.2\cs, \y) -- (\x - 0.2\cs, \y + 2\cs);
		\draw [|<->|, very thick] (\x, \y + 2.2\cs) -- (\x + 1\cs, \y + 2.2\cs);
		
		\draw [|<->|, very thick] (\x, \y - 0.2\cs) -- (\x + 5\cs, \y - 0.2\cs);
	}
	
	
	\draw [thick, rounded corners]	(-4.5\cs, -14.5\cs) rectangle +(11.5\cs, -8\cs);
	
	\foreach \x in {-1\cs}
		\foreach \y in {-21\cs}
	{\Large
		\draw [thick] (\x, \y) rectangle +(5\cs, 5\cs);
		\draw [thick] (\x, \y) rectangle +(1\cs, 5\cs);
		\draw [thick] (\x + 1.5\cs, \y) rectangle +(2\cs, 5\cs);
		\draw [thick] (\x, \y + 4\cs) rectangle +(5\cs, 1\cs);
		\draw [thick] (\x, \y + 1.5\cs) rectangle +(5\cs, 2\cs);

        \node at (\x + 2.5\cs, \y + 2.5\cs) {\huge $\hat A$};
		
		\node at (\x + 0.5\cs, \y + 5.7\cs) {\( r \)};
		\node at (\x - 0.7\cs, \y + 2.5\cs) {\( m \)};
		\node at (\x - 0.7\cs, \y + 4.5\cs) {\( r \)};
		\node at (\x + 2.5\cs, \y + 5.7\cs) {\( n \)};
		\node at (\x + 5.7\cs, \y + 2.5\cs) {\( M \)};
		\node at (\x + 2.5\cs, \y - 0.7\cs) {\( N \)};
		
		{\tiny
		\node [align = center] at (\x + 2.5\cs, \y + 4.5\cs) {\normalsize maxvol\\ \normalsize -rect};
		\node [rotate = 90, align = center] at (\x + 0.5\cs, \y + 2.5\cs) {\normalsize maxvol\\ \normalsize -rect};
		}
		
		\draw [|<->|, very thick] (\x, \y + 5.2\cs) -- (\x + 1\cs, \y + 5.2\cs);
		\draw [|<->|, very thick] (\x + 1.5\cs, \y + 5.2\cs) -- (\x + 3.5\cs, \y + 5.2\cs);
		\draw [|<->|, very thick] (\x - 0.2\cs, \y + 4\cs) -- (\x - 0.2\cs, \y + 5\cs);
		\draw [|<->|, very thick] (\x - 0.2\cs, \y + 1.5\cs) -- (\x - 0.2\cs, \y + 3.5\cs);
		
		\draw [|<->|, very thick] (\x + 5.2\cs, \y) -- (\x + 5.2\cs, \y + 5\cs);
		\draw [|<->|, very thick] (\x, \y - 0.2\cs) -- (\x + 5\cs, \y - 0.2\cs);
	}
	
\end{tikzpicture}
\caption{General maximum volume and projective volume search scheme. Algorithms are enumerated according to the execution order and appearance in the text. Arrows show the new (appended) rows or columns (pre-maxvol, maxvol2) or changes of the current submatrix $\hat A$ (maxvol, maxvol-rect).}\label{scheme}
\end{figure}
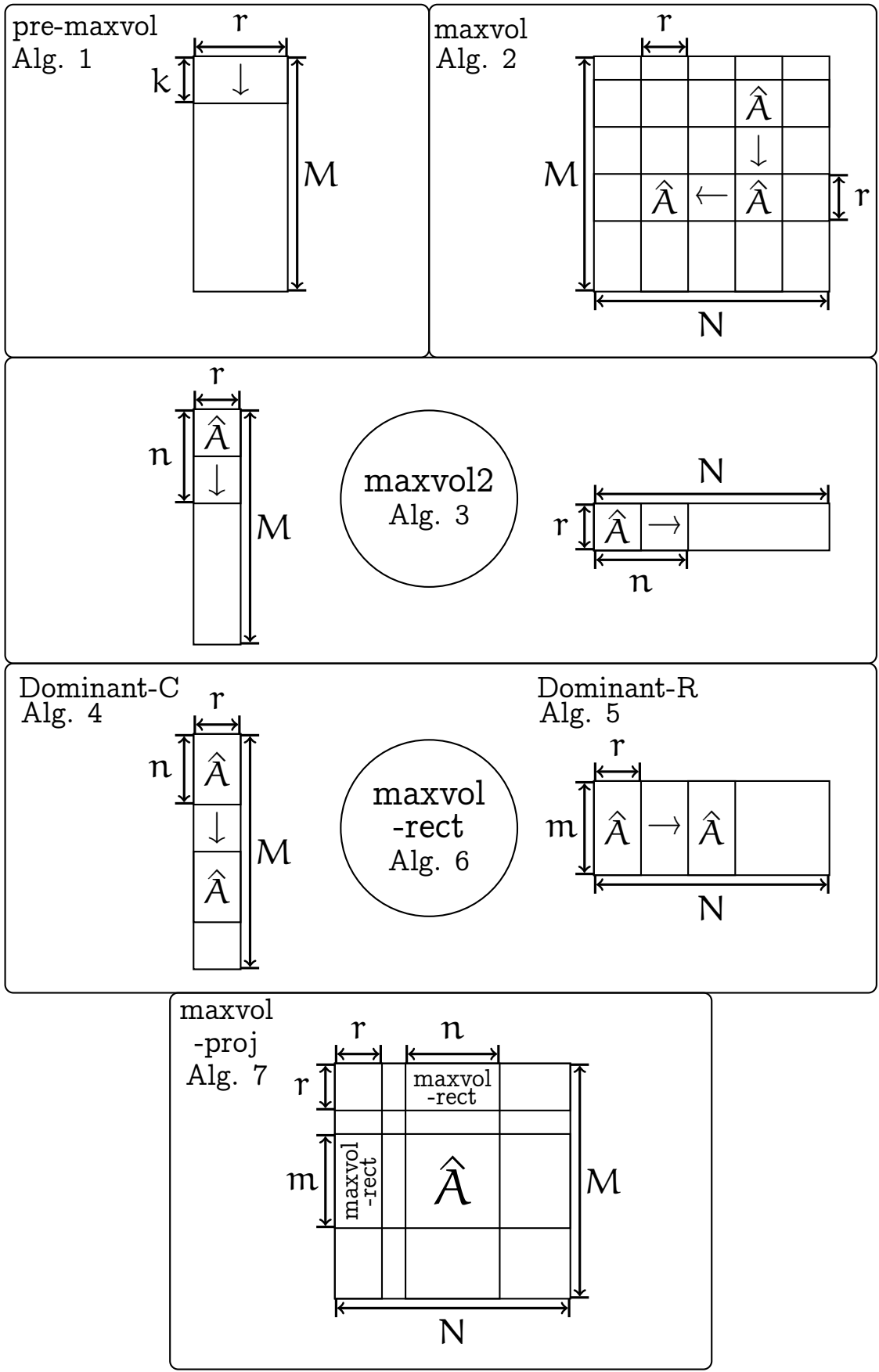

\subsection{How to start}\label{premaxvolsubsec}

\begin{wrapfigure}{R}{0.35\textwidth}
    \begin{tikzpicture}
	\makeatletter
		\let\cs\@undefined
	\makeatother

	\newlength{\cs} 
	\setlength{\cs}{1.0cm}

	\foreach \x in {1.5\cs}
		\foreach \y in {1\cs}
	{\huge
		\draw [thick] (\x, \y) rectangle +(2\cs, 5\cs);
		\draw [thick] (\x, \y + 4\cs) rectangle +(2\cs, 1\cs);

        \node at (\x + 1\cs, \y + 4.5\cs) {\( \downarrow \)};
		
		\node at (\x + 1\cs, \y + 5.7\cs) {\( n \)};
		\node at (\x - 0.7\cs, \y + 4.5\cs) {\( r \)};
		\node at (\x + 2.7\cs, \y + 2.5\cs) {\( M \)};
		
		\draw [|<->|, very thick] (\x, \y + 5.2\cs) -- (\x + 2\cs, \y + 5.2\cs);
		\draw [|<->|, very thick] (\x - 0.2\cs, \y + 4\cs) -- (\x - 0.2\cs, \y + 5\cs);
		
		\draw [|<->|, very thick] (\x + 2.2\cs, \y) -- (\x + 2.2\cs, \y + 5\cs);
	}
\end{tikzpicture}
    \caption{{\myfont pre-maxvol}: construction of $\hat A = A_{\mathcal{I},:}$. Pay attention to the notations for the number of rows and columns.}
    \label{premaxvol-fig}
\end{wrapfigure}
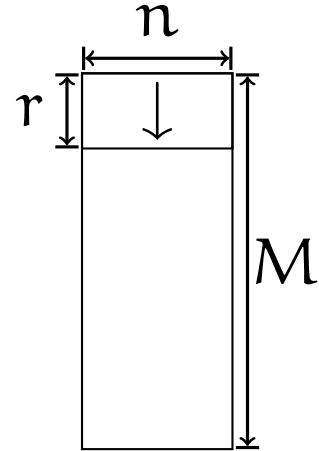

Our first task is to select some rows, corresponding to a submatrix with a sufficiently large volume so that we can decrease the number of steps for further algorithms (see figure \ref{premaxvol-fig}). To do it, we can construct $RRQR$ (rank-revealing $QR$) via Householder reflections with column pivoting.
This algorithm, called $QRP$, iteratively adds a new row/column corresponding to the row/column of the current error with the largest 2-norm (for a detailed description, see \cite{hqr}, page 278).
An alternative version of this algorithm, which we call {\myfont pre-maxvol}, is presented in the Appendix. {\myfont Pre-maxvol} is sufficient to obtain a submatrix with a volume that differs from the maximum volume submatrix by no more than $r!$ times (see proposition \ref{pre-prop}).

The ratio of the new volume to the old one is equal to the length of the new row in the orthogonal complement to the already selected rows.
These squared lengths will be stored in a vector $\gamma$.

\begin{algorithm}[h!]
\caption{{\myfont pre-maxvol} (The full version is a modification of the addition of rows in $RRQR$ from \cite{rrqr})}
\label{premaxvol-alg}
\begin{algorithmic}[1]
\REQUIRE{Matrix $A \in \mathbb{R}^{M \times n}$, required rank $r$.}
\ENSURE{Set of row indices $\mathcal{I}$ of cardinality $r$, containing a submatrix with volume no more than $r!$ less than the maximum.}
\STATE $\mathcal{I} := \emptyset$
\FOR{$i := 1$ \TO $M$}
  \STATE $\gamma_i := \| A_{i,:} \|_2^2$
\ENDFOR
\FOR{$k := 1$ \TO $r$}
  \STATE Select $j$, corresponding to $\mathop {\max }\limits_{j}  {\gamma_j}$
  \STATE $\mathcal{I} := \mathcal{I} \cup \{j\}$
  \STATE Update elements of $\gamma$: squared lengths of rows of $A$ after projection on the orthogonal complement to the rows in $\mathcal{I}$
\ENDFOR
\end{algorithmic}
\end{algorithm}

The complexity of {\myfont pre-maxvol} is $O(Mnr)$ for $A \in \mathbb{C}^{M \times n}$. 

The following statement was first proved in \cite{gr_maxvol}, where the greedy addition of rows was also investigated. Here we give a simpler proof using the projective volume.

\begin{proposition}[\cite{gr_maxvol}]\label{pre-prop}
The algorithm {\myfont pre-maxvol} finds a submatrix with the volume not more than $r!$ times smaller than the maximum.
\end{proposition}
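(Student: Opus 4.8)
The plan is to track how the volume of the greedily chosen submatrix degrades relative to the maximum volume submatrix, one row at a time, using the identity ``ratio of new volume to old volume equals the squared length of the new row projected onto the orthogonal complement of the rows already chosen.'' Denote by $\mathcal{I}_k$ the set of $k$ rows chosen after $k$ steps, and let $\gamma_k^{\max}$ be the maximal squared projected length at step $k$ (the quantity the greedy step picks). Then $\cV^2(A_{\mathcal{I}_r}) = \prod_{k=1}^r \gamma_k^{\max}$. I would compare this with the maximum volume submatrix $A_M$ supported on a row set $T$ with $|T| = r$.

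The key step is a lower bound on $\gamma_k^{\max}$ in terms of $\cV(A_M)$. At step $k$, having already selected $\mathcal{I}_{k-1}$, consider appending any one of the $r - (k-1)$ rows of $T$ not yet chosen: by Cauchy--Binet / the base-times-height formula, the product of the squared projected lengths obtained by appending the rows of $T \setminus \mathcal{I}_{k-1}$ one after another (in any order) to $A_{\mathcal{I}_{k-1}}$ is at least $\cV^2(A_M) / \cV^2(A_{\mathcal{I}_{k-1} \cap T})$, and in particular at least $\cV^2(A_M)/\cV^2(A_{\mathcal{I}_{k-1}})$ once one checks the submatrix on $\mathcal{I}_{k-1}$ has volume at least that on $\mathcal{I}_{k-1} \cap T$ — or, more cleanly, one argues directly that the $r - k + 1$ rows of $T$ that remain, projected onto the complement of $\mathrm{span}(\mathcal{I}_{k-1})$, span a parallelepiped of $(r-k+1)$-volume at least $\cV(A_M)/\cV(A_{\mathcal{I}_{k-1}})$, because projecting can only be undone by the rows already chosen whose combined volume is $\cV(A_{\mathcal{I}_{k-1}})$. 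Hence the largest of those $r-k+1$ projected squared lengths is at least $\bigl(\cV(A_M)/\cV(A_{\mathcal{I}_{k-1}})\bigr)^{2/(r-k+1)}$, and the greedy choice does at least as well, so
\[
  \gamma_k^{\max} \;\geq\; \left( \frac{\cV(A_M)}{\cV(A_{\mathcal{I}_{k-1}})} \right)^{2/(r-k+1)}.
\]

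From here the argument is a telescoping estimate. Writing $v_k = \cV(A_{\mathcal{I}_k})$ and $V = \cV(A_M)$, the bound reads $v_k^2 / v_{k-1}^2 \geq (V/v_{k-1})^{2/(r-k+1)}$, i.e. $v_k \geq V^{1/(r-k+1)} v_{k-1}^{1 - 1/(r-k+1)} = V^{1/(r-k+1)} v_{k-1}^{(r-k)/(r-k+1)}$. Taking logarithms and unrolling the recursion for $k = 1, \dots, r$ (with $v_0 = 1$), the coefficient of $\log V$ sums to $1$ and one is left with $\log v_r \geq \log V - \sum_{k} c_k$ for explicit combinatorial coefficients $c_k$; carrying out this bookkeeping shows $v_r \geq V / r!$, i.e. $\cV(A_M) \leq r!\,\cV(A_{\mathcal{I}_r})$. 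Alternatively, and perhaps more transparently, one can use Lemma \ref{for-lem}: at step $k$ apply it to the matrix whose rows are the projections of the $r-k+1$ remaining rows of $T$ onto $\mathrm{span}(\mathcal{I}_{k-1})^\perp$ (a matrix of rank $r-k+1$), which gives $\sum$ over single rows of squared projected length $\geq \bigl(\cV_{r-k+1}\bigr)^2$ of that projected matrix, and since there are $r-k+1$ terms the max is at least a $1/(r-k+1)$ fraction of it on a log scale; the factor $r!$ then appears as $\prod_{k=1}^r (r-k+1) = r!$ after combining the per-step losses.

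The main obstacle is making the per-step bound on $\gamma_k^{\max}$ fully rigorous: one must correctly relate the $(r-k+1)$-volume of the \emph{projected} remaining rows of $T$ to $\cV(A_M)/\cV(A_{\mathcal{I}_{k-1}})$, which requires either a careful Cauchy--Binet expansion or the observation that $\cV(A_T) \le \cV(A_{\mathcal{I}_{k-1}\cap T})\cdot(\text{projected }(r-k+1)\text{-volume})$ together with $\cV(A_{\mathcal{I}_{k-1}\cap T}) \le \cV(A_{\mathcal{I}_{k-1}})$ — and the latter inequality must itself be justified (it follows because $A_{\mathcal I_{k-1}\cap T}$ is a submatrix of $A_{\mathcal I_{k-1}}$ of full row rank, and the volume of a submatrix formed by a subset of rows is bounded by the projective volume of the larger one, which is $\cV(A_{\mathcal I_{k-1}})$). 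Once that inequality is in hand, the rest is the routine telescoping/logarithm computation sketched above, which I would not belabor.
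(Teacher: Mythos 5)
There is a genuine gap: the per-step inequality
\[
\gamma_k^{\max}\;\geqslant\;\left(\frac{\cV(A_M)}{\cV(A_{\mathcal{I}_{k-1}})}\right)^{2/(r-k+1)}
\]
on which your entire telescoping rests is false, and the justification you sketch for it goes in the wrong direction. Writing $S=\mathcal{I}_{k-1}\cap T$, the base-times-height identity gives exactly $\cV(A_T)=\cV(A_S)\cdot\cV(P_S^{\perp}A_{T\setminus S})$, where $P_S^{\perp}$ projects onto the orthogonal complement of the rows in $S$; replacing $P_S^{\perp}$ by the projector onto the orthogonal complement of the \emph{larger} span of all chosen rows $\mathcal{I}_{k-1}$ can only \emph{shrink} the projected volume, so the inequality you need, $\cV(A_T)\leqslant\cV(A_S)\cdot\cV(P_{\mathcal{I}_{k-1}}^{\perp}A_{T\setminus S})$, is backwards whenever $\mathcal{I}_{k-1}\not\subseteq T$. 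Equivalently, $\cV(P_{\mathcal{I}_{k-1}}^{\perp}A_{T\setminus\mathcal{I}_{k-1}})=\cV(A_{\mathcal{I}_{k-1}\cup T})/\cV(A_{\mathcal{I}_{k-1}})$, and appending the off-$T$ chosen rows to $A_T$ can drastically \emph{decrease} its volume, since these are wide ($r\leqslant n$) matrices. A concrete counterexample with $r=2$: take the three rows $(1+\delta,0)$, $(\cos\theta,\sin\theta)$, $(-\cos\theta,\sin\theta)$. Greedy picks the first row, after which $\gamma_2^{\max}=\sin^2\theta$, while your bound demands $\gamma_2^{\max}\geqslant\bigl(2\sin\theta\cos\theta/(1+\delta)\bigr)^2\approx4\sin^2\theta$. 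There is also a red flag you should have caught in the bookkeeping itself: unrolling your recursion $\log v_k\geqslant\frac{1}{r-k+1}\log V+\frac{r-k}{r-k+1}\log v_{k-1}$ gives $\log v_k\geqslant\frac{k}{r}\log V$ by induction, hence $v_r\geqslant V$ with no $r!$ left over at all — your per-step bound, if true, would prove greedy is \emph{exactly} optimal, which the example above (where the greedy-to-optimal volume ratio tends to $2=r!$) refutes. The same collapse happens in your alternative variant via Lemma \ref{for-lem}: at the last step $k=r$ the exponent is $2/1$ and the intermediate volumes cancel, again forcing $v_r\geqslant V$.

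The paper avoids this per-step comparison against a fixed optimal set altogether. It argues by induction on $r$: using Lemma \ref{first-lem} and Lemma \ref{for-lem} applied to the $(r+1)\times n$ matrix obtained by stacking the longest row $a$ (greedy's first pick) on top of $A_M$, it shows that \emph{some} $r$-row submatrix containing $a$ has volume at least $\cV(A_M)/r$; it then recurses on the matrix of orthogonal complements to $a$, losing a further factor $(r-1)!$. The total loss is thus assembled as $r\cdot(r-1)\cdots1$ across the induction, with the optimal reference submatrix allowed to change at every level — which is precisely the freedom your fixed-$T$ telescoping forbids and why it cannot work as stated.
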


\begin{proof}
We yield a proof by induction on $r$. The base $r = 1$ is obvious. Let us suppose that the proposition is true for an algorithm with $r-1$ rows and prove the induction step.

Let $a$ be the longest row in $A$. Then there is a submatrix in $\hat{A} \in \mathbb{C}^{r \times n}$, which contains $a$ and such that
\begin{equation}\label{prop1-eq}
\frac{{\cal V}(\hat{A})}{{\cal V} (A_M)} \geq \frac{1}{r},
\end{equation}
where $A_M \in \mathbb{C}^{r \times n}$ is the maximum volume submatrix. If $a$ is contained in $A_M$, then (\ref{prop1-eq}) is true for $\hat A = A_M$. Otherwise cnsider a submatrix $A' = \left[ {\begin{array}{*{20}{c}}
  a \\ 
  {{A_M}} 
\end{array}} \right] \in \mathC^{(r+1)^n}$, which is obtained by adding $a$ to $A_M$. From item 1 of Lemma 1 we have
\[
\frac{\left( \cVr(A') \right)^2}{\left( \cVr(A_M) \right)^2} \geqslant 1 + \|A_M^+ a \|_2^2,
\]
where we have pseudoinverse instead of $r$-pseudoinverse, because $\rank A_M = r$. On the other hand, from Lemma 2 
\[
  \sum\limits_{T,\;|T|=r} \left(\cV(A_T)\right)^2 \geqslant \left( \cVr(A') \right)^2,
\]
so
\[
\frac{\sum\limits_{T,\;|T|=r} \left(\cV(A_T)\right)^2}{\left( \cVr(A_M) \right)^2} \geqslant \frac{\left( \cVr(A') \right)^2}{\left( \cVr(A_M) \right)^2} \geqslant 1 + \|A_M^+ a \|_2^2.
\]
Next,
\[
  1 + \|A_M^+ c \|_2^2  \geqslant 1 + \frac{\| c \|_2^2}{\| A_M \|_2^2} \geqslant 1 + \frac{\| c \|_2^2}{\| A_M \|_F^2} \geqslant 1 + \frac{1}{r},
\]
since $c$ is not smaller than any row of $A_M$. Thus, the sum of the squared volumes of all the submatrices in $A'$ is not less than $1 + 1/r$. By subtracting the volume of $A_M$, the sum of squared volumes of the remaining $r$ submatrices is estimated as at least $1/r$. This fact implies that the squared volume of some submatrix is at least $1/r^2$ and proves (\ref{prop1-eq}).

It remains to use the induction hypothesis to see that the algorithm eventually finds a submatrix $A_a \in \mathbb{C}^{r \times n}$ containing $a$, such that
\begin{equation}\label{prop1-eq2}
\frac{{\cal V}(A_a)}{{\cal V}(\hat{A})} \geq \frac{1}{(r-1)!}.
\end{equation}
Indeed, let us follow the algorithm and change $A$ by replacing all the rows, starting from the second with their orthogonal complements to $a$. Any $r \times n$ submatrix containing $a$ has a volume, equal to $\|a\|_2$ times the volume of the remaining $r-1$ rows of this submatrix. $\frac{\cV(\hat A)}{\|a\|_2}$ is not greater than maximum volume in the orthogonal complement, so the volume of these $r-1$ rows is bounded from below by $\frac{\cV(\hat A)}{(r-1)!\|a\|_2}$ by the induction hypothesis, which proves (\ref{prop1-eq2}). Combining (\ref{prop1-eq}) and (\ref{prop1-eq2}) proves the propoition.
\end{proof}

\subsection{Square locally maximum volume search}\label{maxvolsubsec}

We move on to the {\myfont maxvol} algorithm (see figure \ref{maxvol-fig}). It selects the $r \times r$ submatrix of large volume.

The main idea of {\myfont maxvol} is to find a dominant submatrix. We remind that a submatrix is called dominant if replacing any single row (column) by another row (column) of the matrix does not increase its volume.
The criterion for submatrix dominance comes from the following Lemma.

\begin{lemma}[see proof of Lemma 1 in \cite{maxvol}]\label{maxvol-lem}
Let $\hat A \in \mathbb{C}^{r \times r}$ be a submatrix in the first $r$ rows of the matrix $A \in \mathC^{M \times r}$. Then, replacing the $j$-th row of the submatrix $\hat A$ with the $i$-th row of $A$ ($i > r$) changes the squared volume of $\hat A$ equal to $V_{old} = \cV(\hat A)$ as
\[
\begin{gathered}
 V_{new}^2 / V_{old}^2 = |C_{ij}|^2, \\
 C = A \hat A^{-1} \in \mathC^{M \times r}.
\end{gathered}
\]
\end{lemma}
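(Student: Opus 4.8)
The plan is to reduce the claim to a one-line determinant computation, using that for a \emph{square} matrix the volume is just the modulus of the determinant: $V_{old} = |\det \hat A|$ and $V_{new} = |\det \hat A'|$, where $\hat A' \in \mathbb{C}^{r \times r}$ denotes $\hat A$ with its $j$-th row replaced by the $i$-th row $A_{i,:}$ of $A$. Note $\hat A$ is invertible (its volume $V_{old}$ is nonzero in the ambient context), so $C = A\hat A^{-1}$ and everything below is well defined.

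The first step is to express the incoming row through the old ones. From $C = A\hat A^{-1}$ we get $A = C\hat A$, hence $A_{i,:} = C_{i,:}\hat A = \sum_{k=1}^{r} C_{ik}\hat A_{k,:}$; that is, $A_{i,:}$ is an explicit linear combination of the rows of $\hat A$ whose coefficients are the entries of the $i$-th row of $C$.

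The second step is to evaluate $\det \hat A'$ by expanding multilinearly along the $j$-th row: $\det \hat A' = \sum_{k=1}^{r} C_{ik}\,\det\bigl(\hat A \text{ with row } j \text{ set equal to } \hat A_{k,:}\bigr)$. Every term with $k \neq j$ has two identical rows (rows $k$ and $j$), so it vanishes, while the $k = j$ term is exactly $\det \hat A$. Therefore $\det \hat A' = C_{ij}\det \hat A$, and taking moduli and squaring yields $V_{new}^2 / V_{old}^2 = |C_{ij}|^2$, as claimed. Equivalently, one may write $\hat A' = \hat A + e_j (A_{i,:} - \hat A_{j,:})$ and apply the matrix determinant lemma, using that $\hat A_{j,:}\hat A^{-1}$ is the $j$-th row of the identity (so it contributes $1$) and $A_{i,:}\hat A^{-1}e_j = (A\hat A^{-1})_{ij} = C_{ij}$, which makes the rank-one correction factor collapse to $C_{ij}$.

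There is essentially no real obstacle here; the only care needed is the bookkeeping of which row is swapped and the observation that $\hat A$ is nonsingular so that $C$ is defined. The same computation, in a rank-$r$-projective variant, will be reused when justifying {\myfont Dominant-C}.
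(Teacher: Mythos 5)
Your proof is correct. Note that the paper does not actually prove this lemma itself --- it only cites the proof of Lemma~1 in \cite{maxvol} --- so there is no internal argument to compare against; your multilinear expansion of $\det \hat A'$ along the replaced row (equivalently, the rank-one determinant update with $\hat A_{j,:}\hat A^{-1}e_j = 1$ and $A_{i,:}\hat A^{-1}e_j = C_{ij}$) is exactly the standard computation underlying the cited result, and it correctly uses that for a square matrix $\cV(\hat A) = |\det \hat A|$. The closest in-paper analogue is Lemma~\ref{bij-lem}, which rederives the rectangular generalization via the {\myfont maxvol2} update formulas rather than via determinants; your determinant route is the simpler one available in the square case.
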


It immediately follows that the $r \times r$ submatrix in fixed $r$ columns is dominant if and only if all absolute values of the elements in $C$ do not exceed 1, that is, $\| C \| _C \leqslant 1$. If this inequality is not satisfied, one can replace the rows.

\begin{algorithm}[h!]
\caption{{\myfont maxvol} \cite{maxvol}} \label{maxvol-alg}
\begin{algorithmic}[1]
\REQUIRE{Matrix $A \in \mathbb{C}^{M \times N}$, starting sets of row indices $\mathcal{I}$ and column indices $\mathcal{J}$ of cardinality $r$. For example, $\mathcal{I} = \mathcal{J} = \{1, ..., r \}$.}
\ENSURE{The row and column indices of the dominant submatrix of rank $r$ written in $\mathcal{I}$ and $\mathcal{J}$.
}
\WHILE{replacements occur}
  \FOR{$changes\_in$ \textbf{in} $\{\mathcal{I}, \mathcal{J}\}$} \label{alg_bit}
    \STATE $C := A_{:,\mathcal{J}} A_{\mathcal{I},\mathcal{J}}^{-1}$ if we change the selected set of rows $\mathcal{I}$
    \STATE $C := \left( A_{\mathcal{I},\mathcal{J}}^{-1} A_{\mathcal{I},:} \right)^*$ if we change the selected set of columns $\mathcal{J}$
    \STATE Select $i$ and $j$ corresponding to $\mathop {\max }\limits_{i,j} \left| {{C_{i,j}}} \right|$
    \WHILE{$\left| C_{i,j} \right| > 1$} \label{alg_sit}
      \STATE Update $C$
      \STATE Replace index $j$ with $i$ in the set $changes\_in$ (in either $\mathcal{I}$ or $\mathcal{J}$)
      \STATE Select $i$ and $j$ corresponding to $\mathop {\max }\limits_{i,j} \left| {{C_{i,j}}} \right|$ 
    \ENDWHILE
  \ENDFOR
\ENDWHILE
\end{algorithmic}
\end{algorithm}

\begin{wrapfigure}{R}{0.5\textwidth}
    \begin{tikzpicture}
	\makeatletter
		\let\cs\@undefined
	\makeatother

	\newlength{\cs} 
	\setlength{\cs}{1.0cm}

	\foreach \x in {1.5\cs}
		\foreach \y in {1\cs}
	{\huge
		\draw [thick] (\x, \y) rectangle +(5\cs, 5\cs);
		\draw [thick] (\x + 1\cs, \y) rectangle +(1\cs, 5\cs);
		\draw [thick] (\x + 3\cs, \y) rectangle +(1\cs, 5\cs);
		\draw [thick] (\x, \y + 3.5\cs) rectangle +(5\cs, 1\cs);
		\draw [thick] (\x, \y + 1.5\cs) rectangle +(5\cs, 1\cs);

        \node at (\x + 3.5\cs, \y + 4\cs) {\( \hat A \)};
		\node at (\x + 1.5\cs, \y + 2\cs) {\( \hat A \)};
		\node at (\x + 3.5\cs, \y + 2\cs) {\( \hat A \)};
		\node at (\x + 3.5\cs, \y + 3\cs) {\( \downarrow \)};
		\node at (\x + 2.5\cs, \y + 2\cs) {\( \leftarrow \)};
		
		\node at (\x + 1.5\cs, \y + 5.7\cs) {\( r \)};
		\node at (\x + 5.7\cs, \y + 2\cs) {\( r \)};
		\node at (\x - 0.7\cs, \y + 2.5\cs) {\( M \)};
		\node at (\x + 2.5\cs, \y - 0.7\cs) {\( N \)};
		
		\draw [|<->|, very thick] (\x + 1\cs, \y + 5.2\cs) -- (\x + 2\cs, \y + 5.2\cs);
		\draw [|<->|, very thick] (\x + 5.2\cs, \y + 1.5\cs) -- (\x + 5.2\cs, \y + 2.5\cs);
		\draw [|<->|, very thick] (\x - 0.2\cs, \y) -- (\x - 0.2\cs, \y + 5\cs);
		\draw [|<->|, very thick] (\x, \y - 0.2\cs) -- (\x + 5\cs, \y - 0.2\cs);
	}
\end{tikzpicture}
    \caption{Change of the current submatrix $\hat A = A_{\mathcal{I},\mathcal{J}}$ when the algorithm {\myfont maxvol} is executed.}
    \label{maxvol-fig}
\end{wrapfigure}
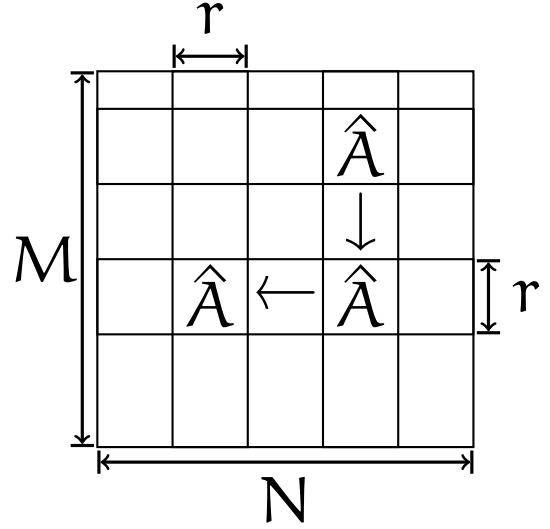

The algorithm complexity is $O((M+N)r\cdot iter + (M+N)r^2 \cdot IT)$ operations. Here $iter$ denotes the total number of replacements of rows and columns, that is, the number of iterations of ``while'' loop on the line \ref{alg_sit}; and $IT$ is the number of passes through rows and columns, that is, the total number of iterations of the ``for'' loop (line \ref{alg_bit}).

In practice $| C_{i, j} |$ is compared to $c = 1 + \varepsilon$ for small $\varepsilon$. This avoids instability and, as we will see below, allows to limit the number of row/column replacements.

So, let $|C_{i,j}|$ be compared with some constant $c > 1$ in the line \ref{alg_sit}. In this case, the algorithm no longer produces a dominant submatrix.
However, the volume of the resulting submatrix differs by no more than $c$ times from the volume of any submatrices that differs in a single row or column.

Let us denote by $\Gamma$ the ratio of the maximum volume to the volume of the starting submatrix.
Then for $\Gamma = (\alpha^2 r)^{r/2}$ we can guarantee the convergence to locally maximum volume submatrix within the corresponding rows or columns in
\[
  k = \left\lceil {\frac{{\ln \Gamma}}{{\ln c}}} \right\rceil - 1
\]
steps. Indeed, the $C$-norm of rows of the matrix $C$ cannot be greater than the ratio of the current volume to the maximum one, and this ratio for each iteration drops by at least $c$ times.

If $\alpha \leqslant c$, we need no more than
\[
  k_1 = \left\lceil r\frac{{\ln c^2 r}}{2\ln c} \right\rceil - 1 \leqslant \left\lceil r \left( 1 + \frac{\ln r}{2\ln c} \right) \right\rceil - 1
\]
steps.

Let $\alpha > c$. Let us denote $\Gamma_k$ the volume ratio after $k$ steps and $\alpha_k = \frac{\Gamma_k^{1/r}}{\sqrt{r}}$. Then the $C$-norm of some row is at least $\alpha_k$, because otherwise the $2$-norms of all the rows are less than $\alpha_k \sqrt{r}$ and the volume ratio is less than $(\alpha_k \sqrt{r})^r = \Gamma_k$. By Lemma \ref{maxvol-lem}
\begin{align}
  \Gamma_{k+1} & \leqslant \Gamma_k / \alpha_k, \nonumber \\
  \alpha_{k+1}^r & \leqslant \alpha_k^r / \alpha_k, \nonumber \\
  \alpha_{k+1} & \leqslant \alpha_k / \alpha_k^{1/r} = \alpha_k^{1 - \frac{1}{r}}. \nonumber
\end{align}
If we start from $\alpha_0 = \alpha$, then after $k$ steps
\[
  \alpha_k \leqslant \alpha^{(1-\frac{1}{r})^k}.
\]
Assume $\alpha$ becomes less or equal to $c$ after $k_2$ iterations. Then we get the following condition:
\begin{equation}\label{k2-eq}
\begin{gathered}
  {\alpha ^{{{\left( {1 - \frac{1}{r}} \right)}^{{k_2}}}}} \leqslant c, \hfill \\
  {k_2} = \left\lceil {\frac{{\ln \frac{{\ln c}}{{\ln \alpha }}}}{{\ln \left( {1 - \frac{1}{r}} \right)}}} \right\rceil  = \left\lceil {\frac{{\ln \frac{{\ln \alpha }}{{\ln c}}}}{{\ln \left( {\frac{r}{{r - 1}}} \right)}}} \right\rceil  \leqslant \left\lceil {r\ln \frac{{\ln \frac{{{\Gamma^{2/r}}}}{r}}}{{\ln c}}} \right\rceil . \hfill \\ 
\end{gathered} 
\end{equation}

Adding $k_1$, in view of the rounding, we obtain
\begin{equation}\label{it_est}
k \leqslant r\left( {1 + \frac{{\ln r}}{{2\ln c}} + \max \left( {0,\ln \frac{{\ln \frac{{{\Gamma^{2/r}}}}{r}}}{{\ln c}}} \right)} \right).
\end{equation}

Let's consider how big the ratio $\Gamma$ can be for a randomly selected starting submatrix. Let us apply the {\myfont maxvol} algorithm to a matrix with randomly distributed matrices of left and right singular vectors.
Since the Gaussian matrices are unitary invariant and $\Gamma$ equals the volume ratio, the current matrix $C$ can be considered Gaussian, multiplied by some $r \times r$ matrix, which does not affect the volume ratios.

The probability of the volume of a random Gaussian matrix to differ from its expectation is exponentially small.
Even though the maximum is taken over $M^r N^r$ different matrices, we obtain with a high probability $\Gamma = O(r^r \ln^r MN)$.
If $c = const > 1$, the substitution in (\ref{it_est}) yields
\[
  k = O\left(r (\ln r + \ln \ln \ln MN)\right) \approx O(r \ln r)
\]
and $iter \approx O(r \ln r \cdot IT)$.

Of course, it is better to have a guarantee on $\Gamma$. We already have a guarantee $\Gamma \leqslant r!$ from the proposition \ref{pre-prop}.

Thus if we apply {\myfont pre-maxvol} beforehand,
\[
  k_2 \leqslant \left\lceil {r\ln \frac{{\ln r}}{{\ln c}}} \right\rceil.
\]
The asymptotics of the number of steps {\myfont maxvol} takes is then dominated by $k_1$, so
\[
  iter = O(r \ln r \cdot IT)
\]
is guaranteed when $c = const > 1$.

In practice, the starting submatrix for {\myfont maxvol} is instead often chosen using the Bebendorf cross algorithm \cite{Beb}, which is equivalent to incomplete Gaussian elimination with partial pivoting. Although the upper bound for elements in $C$ in this algorithm is $2^r$, it is usually enough. 
Indeed, we have $\Gamma \leqslant r^{r/2} \cdot 2^{r^2}$ and the expression for the additional number of steps $k_2$ (\ref{k2-eq}) provides that for $c = O(1)$ even such an estimate on the starting submatrix does not spoil the asymptotics for the number of permutations in {\myfont maxvol}.

\subsection{How to add more rows or columns}\label{maxvol2subsec}

Now consider the addition of rows after $r$-th one, which is performed by \cite{maxvol2} (see figure \ref{maxvol2-fig}).
As in {\myfont pre-maxvol}, the row lengths provide the addition criterion.

\begin{lemma}[\cite{maxvol2}]\label{maxvol2-lem}
Let $\hat A_0 \in \mathbb{C}^{n \times r}$ be a submatrix in the first $n$ rows of the matrix $A \in \mathC^{M \times r}$. Then adding $i$-th row of $A$ to the submatrix $\hat A_0$ changes the squared volume of $\hat A_0$ as
\[
  \cV(\hat A)^2 / \cV(\hat A_0)^2 = 1 + (l_0)_i = 1 + \|(C_0)_{i,:}\|_2^2,
\]
where
\[
  C_0 = A \hat A_0^{+} \in \mathC^{M \times n}.
\]
\end{lemma}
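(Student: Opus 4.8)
The plan is to reduce the identity to the rank-one update formula for determinants. First I would use that, for any matrix $B$ with at least as many rows as columns, $\cV(B)^2 = \prod_k \sigma_k^2(B) = \det(B^{*}B)$. Writing $a^{*} := A_{i,:}$ for the appended row (so $a \in \mathbb{C}^{r}$), the enlarged submatrix $\hat A$ is $\hat A_0$ stacked on top of $a^{*}$, whence $\hat A^{*}\hat A = \hat A_0^{*}\hat A_0 + a a^{*}$.

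Since $\cV(\hat A_0) \neq 0$, the matrix $\hat A_0$ has full column rank $r$, so $G := \hat A_0^{*}\hat A_0$ is invertible and $\hat A_0^{+} = G^{-1}\hat A_0^{*}$. The matrix determinant lemma then gives
\[
  \cV(\hat A)^2 = \det\!\left(G + a a^{*}\right) = \det(G)\bigl(1 + a^{*}G^{-1}a\bigr) = \cV(\hat A_0)^2 \bigl(1 + a^{*}G^{-1}a\bigr).
\]
The last step is to recognise $a^{*}G^{-1}a$ as the squared $2$-norm of the $i$-th row of $C_0 = A\hat A_0^{+}$. That row is $a^{*}\hat A_0^{+} = a^{*}G^{-1}\hat A_0^{*}$, so (using that $G$ is Hermitian)
\[
  \|(C_0)_{i,:}\|_2^2 = a^{*}G^{-1}\hat A_0^{*}\hat A_0 G^{-1}a = a^{*}G^{-1}a = (l_0)_i ,
\]
and substituting this back into the previous display proves the lemma.

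I do not expect a genuine obstacle here; the only point needing a word of care is the invertibility of $G$, i.e. the implicit assumption that the starting volume is nonzero (otherwise the ratio is undefined anyway). An equivalent route, closer to the $QR$-based update formulas used elsewhere in the paper, is to take a thin $QR$ factorisation $\hat A_0 = Q_0 R_0$, pull $R_0$ out on the right of $\hat A$ so that $\cV(\hat A) = |\det R_0| \cdot \cV(\tilde B)$, where $\tilde B$ is $Q_0$ stacked on top of $a^{*}R_0^{-1}$, and apply the same rank-one determinant identity to $\tilde B$, whose leading Gram block is the identity; one then checks $\|a^{*}R_0^{-1}\|_2^2 = \|(C_0)_{i,:}\|_2^2$ exactly as above. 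Some care with complex conjugates is needed throughout since $A$ is over $\mathbb{C}$.
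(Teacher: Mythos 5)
Your proof is correct. Note that the paper does not actually prove this lemma itself; it is imported verbatim from \cite{maxvol2}, and the derivation that follows it in the text only establishes the update formulas for $C$, $C'$ and $l$, taking the volume ratio for granted. Your argument --- writing $\cV(\hat A)^2=\det(\hat A^*\hat A)=\det(\hat A_0^*\hat A_0+aa^*)$ and applying the matrix determinant lemma --- is the standard route and is exactly consistent with the identity $c^*c=a^*\left(\hat A_0^*\hat A_0\right)^{-1}a$ that the paper itself derives a few lines later when computing the updates, so there is no real divergence of method. Your two side remarks are both apt: the invertibility of $G=\hat A_0^*\hat A_0$ (equivalently $\cV(\hat A_0)\neq 0$ and $n\geq r$) is indeed an implicit hypothesis without which the ratio is meaningless, and the Hermitian symmetry of $G$ is what collapses $a^*G^{-1}\hat A_0^*\hat A_0G^{-1}a$ to $a^*G^{-1}a=(l_0)_i$. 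The alternative $QR$-based route you sketch is also sound and is essentially the mechanism used later in the Householder-based {\myfont maxvol2} in the Appendix.
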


The original {\myfont maxvol2} algorithm \cite{maxvol2} is presented below including the derivation of the update formulas.

We start with the notation. Again, we deal with the matrix $A \in \mathC^{M \times r}$. Its submatrix $\hat A \in \mathbb{C}^{(n+1) \times r}$ expands the submatrix $\hat A_0 \in \mathbb{C}^{n \times r}$ by appending a row $a^* = A_{i,:}$.
\[
\hat A = \left[ {\begin{array}{*{20}{c}}
  {{{\hat A}_0}} \\ 
  {{a^ * }} 
\end{array}} \right].
\]
In order to add a row maximizing the volume, we should be able to update the matrix $C_0 \in \mathC^{M \times n}$ and the squared 2-norms of its rows. They are stored in a vector $l_0 \in \mathC^M$. For this we define a matrix $C \in \mathC^{M \times n}$ and a column $C' \in \mathC^M$ as follows:
\[
A{\hat A^ + } = \left[ {\begin{array}{*{20}{c}}
  C&{C'} 
\end{array}} \right],\quad \hat A = \left[ {\begin{array}{*{20}{c}}
  {{{\hat A}_0}} \\ 
  a^* 
\end{array}} \right].
\]
They can also be expressed as
\begin{equation}\label{ccs-eq}
C = A{\left( {{{\hat A}^ * }\hat A} \right)^{ - 1}}{{\hat A}_0^*},\quad C' = A{\left( {{{\hat A}^ * }\hat A} \right)^{ - 1}}a
\end{equation}

Our task is to calculate $C$ and $C'$ on the basis of $C_0$. The expression (\ref{ccs-eq}) shows that it is sufficient to find $\left( {{{\hat A}^ * }\hat A} \right)^{ - 1}$. Since
\[
{{\hat A}^ * }\hat A = \hat A_0^ * {{\hat A}_0} + aa^* = \left( {\hat A_0^ * {{\hat A}_0}} \right)\left( {I + {{\left( {\hat A_0^ * {{\hat A}_0}} \right)}^{ - 1}}aa^*} \right),
\]
then for the inverse
\begin{align}\label{asa-eq}
{\left( {{{\hat A}^ * }\hat A} \right)^{ - 1}} & = {\left( {\hat A_0^ * {{\hat A}_0}} \right)^{ - 1}}{\left( {I + {{\left( {\hat A_0^ * {{\hat A}_0}} \right)}^{ - 1}}aa^*} \right)^{ - 1}} \nonumber \\
  & = {\left( {\hat A_0^ * {{\hat A}_0}} \right)^{ - 1}}\left( {I - \frac{{{{\left( {\hat A_0^ * {{\hat A}_0}} \right)}^{ - 1}}aa^*}}{{1 + a^*{{\left( {\hat A_0^ * {{\hat A}_0}} \right)}^{ - 1}}a}}} \right).
\end{align}
The expression in the brackets can be simplified by introducing the notation $c^* = C_{i,:}$. 
Indeed,
\begin{align}
  c^* & = {C_{0i,:}} = {A_{i,:}}\hat A_0^ +  = a^*{\left( {\hat A_0^ * {{\hat A}_0}} \right)^{ - 1}}\hat A_0^ *, \nonumber \\
  c^*c & = a^*{\left( {\hat A_0^ * {{\hat A}_0}} \right)^{ - 1}}\hat A_0^ * \hat A_0 {\left( {\hat A_0^ * {{\hat A}_0}} \right)^{ - 1}} a = a^* {\left( {\hat A_0^ * {{\hat A}_0}} \right)^{ - 1}} a. \nonumber
\end{align}
Also,
\[
{\left( {\hat A_0^ * {{\hat A}_0}} \right)^{ - 1}}aa^* = aa^*{\left( {\hat A_0^ * {{\hat A}_0}} \right)^{ - 1}}.
\]
Substituting into (\ref{asa-eq}) gives
\[
{\left( {{{\hat A}^ * }\hat A} \right)^{ - 1}} = {\left( {\hat A_0^ * {{\hat A}_0}} \right)^{ - 1}}\left( {I - \frac{{aa^*{{\left( {\hat A_0^ * {{\hat A}_0}} \right)}^{ - 1}}}}{{1 + c^*c}}} \right).
\]
Taking into account
\[
C_0 c = A{\left( {\hat A_0^ * {{\hat A}_0}} \right)^{ - 1}}\hat A_0^ * {{\hat A}_0}{\left( {\hat A_0^ * {{\hat A}_0}} \right)^{ - 1}}a = A{\left( {\hat A_0^ * {{\hat A}_0}} \right)^{ - 1}}a,
\]
one can calculate $C'$ from (\ref{ccs-eq}):
\begin{align}
C' & = A{\left( {\hat A_0^ * {{\hat A}_0}} \right)^{ - 1}}\left( {I - \frac{{{a^ * }a{{\left( {\hat A_0^ * {{\hat A}_0}} \right)}^{ - 1}}}}{{1 + c^*c}}} \right)a \nonumber \\
  & = A{\left( {\hat A_0^ * {{\hat A}_0}} \right)^{ - 1}}\left( {a - \frac{a c^*c}{{1 + c^*c}}} \right)a \nonumber \\
  & = A{\left( {\hat A_0^ * {{\hat A}_0}} \right)^{ - 1}}\frac{{a}}{{1 + c^*c}} \nonumber \\
  & = \frac{{{C_0}c}}{{1 + c^*c}} \nonumber \\
  & = C_0 C_{0i,:}^* / (1 + l_i). \nonumber
\end{align}

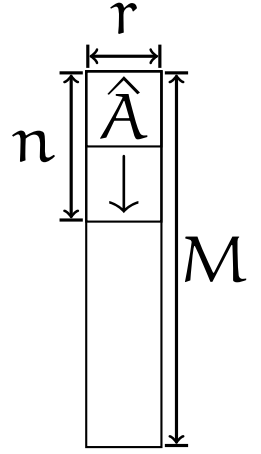
\begin{wrapfigure}{R}{0.25\textwidth}
    \begin{tikzpicture}
	\makeatletter
		\let\cs\@undefined
	\makeatother

	\newlength{\cs} 
	\setlength{\cs}{1.0cm}

	\foreach \x in {1.5\cs}
		\foreach \y in {1\cs}
	{\huge
		\draw [thick] (\x, \y) rectangle +(1\cs, 5\cs);
		\draw [thick] (\x, \y + 3\cs) rectangle +(1\cs, 2\cs);
		\draw [thick] (\x, \y + 4\cs) rectangle +(1\cs, 1\cs);

        \node at (\x + 0.5\cs, \y + 4.5\cs) {\( \hat A \)};
        \node at (\x + 0.5\cs, \y + 3.5\cs) {\( \downarrow \)};
		
		\node at (\x + 0.5\cs, \y + 5.7\cs) {\( r \)};
		\node at (\x - 0.7\cs, \y + 4\cs) {\( n \)};
		\node at (\x + 1.7\cs, \y + 2.5\cs) {\( M \)};
		
		\draw [|<->|, very thick] (\x, \y + 5.2\cs) -- (\x + 1\cs, \y + 5.2\cs);
		\draw [|<->|, very thick] (\x - 0.2\cs, \y + 3\cs) -- (\x - 0.2\cs, \y + 5\cs);
		
		\draw [|<->|, very thick] (\x + 1.2\cs, \y) -- (\x + 1.2\cs, \y + 5\cs);
	}
\end{tikzpicture}
    \caption{Extension of $\hat A = A_{\mathcal{I},:}$ by {\myfont maxvol2}.}
    \label{maxvol2-fig}
\end{wrapfigure}

We compute $C$ similarly:
\begin{align}
  C & = A{\left( {\hat A_0^ * {{\hat A}_0}} \right)^{ - 1}}\left( {I - \frac{{aa^*{{\left( {\hat A_0^ * {{\hat A}_0}} \right)}^{ - 1}}}}{{1 + c^*c}}} \right)\hat A_0^ *  \hfill \\
   & = {C_0} - C'a^*{\left( {\hat A_0^ * {{\hat A}_0}} \right)^{ - 1}}\hat A_0^ *  \hfill \\
   & = {C_0} - C'c^* \nonumber \\ 
   & = C_0 - C' C_{0i,:}. \nonumber
\end{align} 

Now we can directly calculate the lengths of the new rows. The updated $l_0$ is denoted by $l$.
\begin{align}
  {l_j} & = {C_{j,:}}C_{j,:}^ * + C'_j C'^*_j  \nonumber \\
   & = \left( {{C_{0j,:}} - {C'_j}{C_{0i,:}}} \right){\left( {{C_{0j,:}} - {C'_j}{C_{0i,:}}} \right)^ * } + {\left| {{C'_j}} \right|^2} \nonumber \\
   & = {l_{0j}} - 2{C'_j}^ * {C_{0j,:}}C_{0i,:}^ *  + {\left| {{C'_j}} \right|^2}{C_{0i,:}}C_{0i,:}^ *  + {\left| {{C'_j}} \right|^2} \nonumber \\
   & = {l_{0j}} - 2C'^*_j{C'_j}\left( {1 + {l_{0i}}} \right) + {\left| {{C'_j}} \right|^2}{l_{0i}} + {\left| {{C'_j}} \right|^2} \nonumber \\
   & = {l_{0j}} - {\left| {{C'_j}} \right|^2}\left( {1 + {l_{0i}}} \right) \nonumber
\end{align}

We present here full version of the original {\myfont maxvol2} algorithm from \cite{maxvol2}. We also derive a new version in the Appendix, which is based on the Householder reflections. It has a complexity $O\left(Mr(n-r)\right)$ instead of $O\left(M(n^2-r^2)\right)$ \cite{Mich}.

\begin{algorithm}[h!]
\caption{Original {\myfont maxvol2} \cite{maxvol2}}
\label{maxvol2-alg}
\begin{algorithmic}[1]
\REQUIRE{Matrix $A \in \mathbb{C}^{M \times r}$, the starting set of row indices $\mathcal{I}$ of cardinality $r$, the required final size $n$.}
\ENSURE{$\mathcal{I}$, supplemented by $n-r$ row indices chosen greedily to maximize the volume.}
\STATE $C := A [A_{\mathcal{I},:}^{-1}, 0_{r \times (n-r)}]$
\STATE $current\_order = \{1, \ldots, M\}$
\STATE $C.swap(\mathcal{I}, \{1, \ldots, r\}, current\_order)$
\COMMENT{$C.swap(A,B,order)$ swaps the elements of $C$, corresponding to indexes from $A$ and $B$ ($A_i$ swaps with $B_i$) and changes the order of corresponding indexes in $order$.
}
\STATE $l := 0_M$
\FOR{$i := r+1$ \TO $M$}
  \STATE $l_i := \| C_{i,:} \|_2^2$
\ENDFOR
\FOR{$new\_size := r+1$ \TO $n$}
  \STATE $i := \mathop {\arg \max }\limits_i  {{l_i}}$
  \STATE $l_i' := 1 + l_i$
  \STATE $l_i := 0$
  \STATE $C_I' := C_{i,:}^* / l_i'$
  \STATE $C' := C C_I'$
  \STATE $C := C - C' C_{i,:}$ \label{m2C}
  \FOR{$j := new\_size$ \TO $M$}
    \STATE $l_j := l_j - l_i' |C_{j}'|^2$
  \ENDFOR
  \STATE $C_{:,new\_size} := C'$
  \STATE $C.swap(new\_size,i,current\_order)$
  \STATE $l.swap(new\_size,i)$
\ENDFOR
\STATE $\mathcal{I} := current\_order[1..n]$
\end{algorithmic}
\end{algorithm}

This algorithm extends the submatrix by greedily chosen rows to maximize the volume. The matrix $C$ can be obtained from {\myfont maxvol} (algorithm \ref{maxvol-alg}) or {\myfont pre-maxvol} (algorithm \ref{premaxvol-alg}). The transposed version can be applied to select columns in some rows $R$.

Also, {\myfont maxvol2} provides some guarantees on the resulting submatrix volume.

\begin{proposition}\label{maxvol2-prop}
If the algorithm {\myfont maxvol2} is applied to $r \times r$ submatrix that differs in volume from the maximum among all $r \times r$ submatrices by no more than $\Gamma = c^r r^{r/2}$ times, where $c \geqslant 1$ (for example, after the algorithm {\myfont maxvol} with the parameter $c$), then after applying {\myfont maxvol2} the ratio of the maximum 2-volume $V_{max}$ among all $n \times r$ submatrices to the 2-volume of the found by {\myfont maxvol2} submatrix $\hat A \in \mathC^{n \times r}$ does not exceed
\begin{equation}\label{after_maxvol2}
  V_{max}/\cV(\hat A) \leqslant \max \left( \left( e\frac{n}{r} \right)^{r/2}, (c^2 r)^{r/2} \right).
\end{equation}
\end{proposition}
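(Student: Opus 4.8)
The plan is to pass to squared volumes and to monitor, along the run of {\myfont maxvol2}, the ratio
\[
  R_k \;:=\; \frac{v_k}{\cV(\hat A_k)^2},\qquad v_k := \max_{|S|=k}\cV(A_{S,:})^2,
\]
where $\hat A_k\in\mathC^{k\times r}$ is the current submatrix after $k-r$ row additions, so that $\hat A_r$ is the {\myfont maxvol} output and $\hat A_n$ the final submatrix. By hypothesis $R_r\le(c^2r)^r$, and the claim \ref{after_maxvol2}, squared, is exactly $R_n\le\max\bigl((en/r)^r,(c^2r)^r\bigr)$. Throughout every intermediate submatrix has full column rank $r$, so $\cV(\hat A_k)^2=\det(\hat A_k^*\hat A_k)$ and $\hat A_k^+=(\hat A_k^*\hat A_k)^{-1}\hat A_k^*$.

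First I would record the one-step estimate. By Lemma \ref{maxvol2-lem} the greedy addition multiplies the squared volume by $1+g_k$ with $g_k=\max_{i\notin\mathcal I_k}\|(C_0^{(k)})_{i,:}\|_2^2$ and $C_0^{(k)}=A\hat A_k^+$. To lower-bound $g_k$, let $T$ be the index set of a maximum-volume $(k+1)\times r$ submatrix and use
\[
  \sum_{i\in T}\bigl\|(C_0^{(k)})_{i,:}\bigr\|_2^2
  \;=\;\tr\!\bigl((\hat A_k^*\hat A_k)^{-1}A_{T,:}^*A_{T,:}\bigr)
  \;\geq\; r\Bigl(v_{k+1}/\cV(\hat A_k)^2\Bigr)^{1/r},
\]
the inequality being AM--GM on the $r$ positive eigenvalues of $(\hat A_k^*\hat A_k)^{-1}A_{T,:}^*A_{T,:}$, whose product is $v_{k+1}/\cV(\hat A_k)^2$. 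The rows of $T$ already lying in $\hat A_k$ are rows of the rank-$r$ orthogonal projector $\hat A_k\hat A_k^+$, so they contribute at most $r$ to this sum; discarding them and averaging over the at most $k+1$ remaining addable rows gives, with $w_k:=\bigl(v_{k+1}/\cV(\hat A_k)^2\bigr)^{1/r}$, the bound $1+g_k\ge (k-r+1+rw_k)/(k+1)$. Combining this with $v_{k+1}/v_k\le(k+1)/(k-r+1)$ --- obtained by deleting from the best $(k+1)$-submatrix its row of smallest leverage score (which is $\le r/(k+1)$, since the leverage scores sum to $r$) and applying Sherman--Morrison --- yields the recursion
\[
  R_{k+1}\;=\;\frac{w_k^{\,r}}{1+g_k}\;\leq\;\frac{(k+1)\,w_k^{\,r}}{k-r+1+rw_k},
  \qquad w_k^{\,r}\;=\;\frac{v_{k+1}}{v_k}\,R_k\;\leq\;\frac{k+1}{k-r+1}\,R_k .
\]

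The remaining task --- and I expect it to be the main obstacle --- is to show that this recursion, started from $R_r\le(c^2r)^r$, stays below $\max\bigl((ek/r)^r,(c^2r)^r\bigr)$ for all $k$, so that $k=n$ gives the statement. Since the displayed bound is increasing in $w_k$, hence in $R_k$, it suffices to study the extremal scalar iteration $\mu_k=M_k^{1/r}$ with
\[
  M_{k+1}\;=\;\frac{\dfrac{(k+1)^2}{k-r+1}\,M_k}{\,k-r+1+r\bigl(\tfrac{k+1}{k-r+1}\,M_k\bigr)^{1/r}\,},\qquad M_r=(c^2r)^r .
\]
When $\mu_k$ is large the second term dominates the denominator and the map acts like $\mu_{k+1}\lesssim(k/r)^{1/r}\mu_k^{\,1-1/r}$, a contraction with fixed point $k/r$; once $\mu_k\le ek/r$ the denominator is at least $k+1$, the map is non-expanding, and $\mu_k$ can never again exceed $ek/r$; while below the transition level the map multiplies $\mu_k$ by at most $(k+1)/(k-r+1)$, whose telescoped product is $\binom{k}{r}^{1/r}\le ek/r$. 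Putting these together one gets $\mu_k\le\max(c^2r,\;ek/r)$ up to $k=n$, the head-start value $c^2r$ decaying geometrically in the logarithm until $ek/r$ takes over. The delicate points are pinning the constant in that decay down to exactly $e$ (this is where $\binom{k}{r}\le(ek/r)^r$ and the factors $\bigl((k+1)/(k-r+1)\bigr)^{1/r}$ must be handled carefully) and checking the regime $k$ just above $r$, where $M_k$ may rise over one or two steps but must be verified to remain under the stated maximum.
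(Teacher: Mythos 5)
Your setup is sound, and in places sharper than what the paper itself uses: the one-step bound $1+g_k\ge(k-r+1+rw_k)/(k+1)$ via the trace/AM--GM argument, and the bound $v_{k+1}/v_k\le(k+1)/(k-r+1)$ via deleting the smallest-leverage row, are both correct. The genuine gap is the recursion analysis, which you yourself flag as ``the remaining task'' and never carry out; moreover, the qualitative claims offered in its place do not all hold. The assertion that ``once $\mu_k\le ek/r$ the denominator is at least $k+1$, the map is non-expanding'' is not justified: a denominator of at least $k+1$ only yields $M_{k+1}\le w_k^{\,r}\le\frac{k+1}{k-r+1}M_k$, which still permits $M_k$ to grow by a factor up to $r+1$ at $k=r$, i.e.\ $\mu_k$ to grow by $(r+1)^{1/r}$, faster than the target $ek/r$ grows over one step, so the claimed absorbing property does not follow. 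Likewise, in the small-$\mu$ regime dropping the $rw_k$ term leaves $M_{k+1}\le\bigl(\tfrac{k+1}{k-r+1}\bigr)^{2}M_k$, so the telescoped factor on $\mu_k$ is $\binom{n}{r}^{2/r}$, not $\binom{n}{r}^{1/r}$; recovering a single power requires a quantitative lower bound on $w_k$ that you do not supply. Because $R_k$ is not monotone, the regimes cannot be patched together informally, and as written the proposal does not establish the stated bound.

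The paper sidesteps the recursion entirely by normalizing against the \emph{fixed} target rather than the moving one: Cauchy--Binet gives that the maximal $n\times r$ squared volume exceeds the starting $r\times r$ squared volume by at most $c^{2r}n^{r}$, and the ratio $\rho_k=V_{max}/\cV(\hat A_k)$ is then monotone non-increasing. Writing the final ratio as $\rho_n=x^rn^{r/2}$, monotonicity means that at \emph{every} step some not-yet-selected row of the maximal submatrix has leverage at least $x^2r$ (otherwise $\|A_M\hat A_k^+\|_F^2<nx^2r$ would force $\cVr(A_M\hat A_k^+)<x^rn^{r/2}\le\rho_k$, a contradiction), so each step divides $\rho_k$ by at least $\sqrt{1+x^2r}\ge x\sqrt r$ with the \emph{final} $x$. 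A single telescoped inequality $x^rn^{r/2}\le c^rn^{r/2}/(x\sqrt r)^{n-r}$ then gives $c^r\ge x^nr^{(n-r)/2}$, and the case split --- either $\rho_n\le(c^2r)^{r/2}$, or $x^{n-r}<n^{r/2}/r^{n/2}$, whence $x^2n<(n/r)^{n/(n-r)}\le e\,n/r$ and $\rho_n=(x\sqrt n)^r<(en/r)^{r/2}$ --- finishes the proof. If you want to complete your route, the cleanest repair is to adopt this fixed normalization, which restores monotonicity and removes the need to control the non-monotone map $M_k\mapsto M_{k+1}$.
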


\begin{proof}
The volume ratio of each $r \times r$ submatrix to the maximum volume among $n \times r$ submatrices is not greater than $c^r r^{r/2}$ and there are $C_n^r \leqslant (n/r)^r$ of $r \times r$ different submatrices inside any $n \times r$ submatrix. By the Cauchy-Binet formula squared volume of $n \times r$ submatrix is equal to the sum of squared volumes of all $r \times r$ submatrices. So the volume ratio of the initial $r \times r$ submatrix to $n \times r$ submatrix of maximum volume is not greater than $c^r n^{r / 2}$.

Let $x \geqslant 1 / \sqrt{r}$. If the ratio of the current $k \times r$ ($r \leqslant k < n$) submatrix volume to the maximum volume among $n \times r$ submatrices is equal to $x^r n^{r / 2}$, at each step the ratio of the current volume to the maximum volume decreases by no less than
\[
  \sqrt{1 + x^2r} \geqslant x \sqrt{r}
\]
times, since it is possible to take rows from the maximum volume submatrix. Hence
\[
  x^r n^{r/2} \leqslant \frac{c^r n^{r/2}}{\left( x \sqrt{r} \right)^{n-r}},
\]
\[
  c^r \geqslant x^n r^{\frac{n-r}{2}}.
\]
If the ratio of volumes turns out to be larger than $c^r r^{r/2}$, then
\[
  x^r n^{r/2} > c^r r^{r/2} \geqslant x^n r^{n/2},
\]
\[
  x^{n-r} < n^{r/2} / r^{n/2},
\]
\[
  x n^{1/2} < \frac{n}{r} \cdot \left( \frac{n}{r} \right)^{\frac{r}{n-r}} \leqslant e \frac{n}{r},
\]
which proves (\ref{after_maxvol2}).
\end{proof}

\subsection{Rectangular locally maximum volume search}\label{maxvolrectsubsec}

Now we are ready to move on to the new algorithms, and we start with {\myfont Dominant-C} (see figure \ref{dominantc-fig}). The main idea is to find a dominant rectangular submatrix. Technically it is done by generalizing the updates in the original {\myfont maxvol2} algorithm \cite{maxvol2}.
Because the {\myfont Dominant-C} searches for a locally maximum volume, it has similar properties to {\myfont maxvol}. For instance, the 2-norm of any row of $C \in \mathC^{M \times n}$ outside the selected ones does not exceed $\sqrt{\frac{r}{n - r + 1}}$. 
It also makes the item 2 of Lemma \ref{first-lem} and the Corollary \ref{maxvol2-cons} true for the submatrix, found by {\myfont Dominant-C}.

The following Lemma provides the replacement criterion. 

\begin{lemma}\label{bij-lem}
Let $\hat A_0 \in \mathbb{C}^{n \times r}$ be a submatrix in the first $n$ rows of the matrix $A \in \mathC^{M \times r}$. 
Then, replacing $j$-th row of $\hat A_0$ by the $i$-th row of $A$ (for $i > n$) changes the squared volume of $\hat A_0$ by a factor
\[
  B_{ij} = |C_{0ij}|^2 + (1 + l_{0i})(1 - l_{0j}),
\]
where the matrix $C_0$ and the vector $l_0$ denote the same as in the Lemma \ref{maxvol2-lem}.
\end{lemma}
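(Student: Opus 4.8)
The plan is to realize a single row swap as a rank-two perturbation of the Gram matrix $\hat A_0^*\hat A_0$, and then collapse the resulting $r\times r$ determinant to a $2\times2$ one by Sylvester's determinant identity. Throughout I assume $\cV(\hat A_0)\neq 0$, i.e. $\rank\hat A_0=r$ (otherwise there is nothing to prove); I write $a_k:=(A_{k,:})^*\in\mathC^r$ for the $k$-th row of $A$ seen as a column vector, and let $\hat A_1$ denote the matrix obtained from $\hat A_0$ by replacing its $j$-th row ($j\le n$) with the $i$-th row of $A$ ($i>n$). Since $\cV(B)^2=\det(B^*B)$ for any full column rank tall $B$, it suffices to compare Gram matrices.

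First I would compute the Gram matrix of $\hat A_1$. Writing $\hat A_1=\hat A_0+e_j(a_i-a_j)^*$ with $e_j$ the $j$-th coordinate vector, expanding $\hat A_1^*\hat A_1$ and using $\hat A_0^*e_j=a_j$ together with $e_j^*e_j=1$, the cross terms telescope and one obtains the clean identity $\hat A_1^*\hat A_1=\hat A_0^*\hat A_0+a_ia_i^*-a_ja_j^*$: the Gram matrix simply drops the rank-one term $a_ja_j^*$ and picks up $a_ia_i^*$. Setting $G:=\hat A_0^*\hat A_0$, $U:=[\,a_i\ \ a_j\,]\in\mathC^{r\times2}$ and $D:=\mathrm{diag}(1,-1)$, we have $a_ia_i^*-a_ja_j^*=UDU^*$, hence
\[
 \frac{\cV(\hat A_1)^2}{\cV(\hat A_0)^2}=\frac{\det(G+UDU^*)}{\det G}=\det(I_r+G^{-1}UDU^*)=\det(I_2+DU^*G^{-1}U),
\]
the last step being the Weinstein--Aronszajn / Sylvester identity $\det(I+MN)=\det(I+NM)$ applied with $M=G^{-1}U$, $N=DU^*$.

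It then remains to identify the entries of $U^*G^{-1}U$ with the quantities in the statement. Because $\hat A_0^+=G^{-1}\hat A_0^*$ and $C_0=A\hat A_0^+$, a direct computation gives $a_i^*G^{-1}a_i=\|(C_0)_{i,:}\|_2^2=l_{0i}$ and likewise $a_j^*G^{-1}a_j=l_{0j}$ (for $j\le n$ this is the $j$-th diagonal entry of the orthogonal projector $\hat A_0\hat A_0^+$, which explains $0\le l_{0j}\le1$), while column $j$ of $C_0$ equals $AG^{-1}a_j$, so $a_i^*G^{-1}a_j=C_{0ij}$. Substituting, with the sign pattern coming from $D$,
\[
 \det\!\begin{pmatrix} 1+l_{0i} & C_{0ij}\\ -\overline{C_{0ij}} & 1-l_{0j}\end{pmatrix}=(1+l_{0i})(1-l_{0j})+|C_{0ij}|^2=B_{ij},
\]
which is the claim. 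This proof has no real obstacle: the only point requiring care is the bookkeeping — keeping the signs in $D$ straight and checking that the off-diagonal Gram entry is $C_{0ij}$ rather than its conjugate. An alternative derivation, ``append row $i$ using Lemma \ref{maxvol2-lem}, then delete row $j$'', also works, but it forces one to carry the updated $C$-matrix along, so the rank-two Gram update above is the cleaner route; note finally that the formula obtained is precisely (\ref{vol_div_pre}), used in the proof of Theorem \ref{maxvol2-cons}.
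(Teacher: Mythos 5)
Your proof is correct, but it takes a genuinely different route from the paper. You realize the swap as a one-shot rank-two perturbation of the Gram matrix, $\hat A_1^*\hat A_1=\hat A_0^*\hat A_0+a_ia_i^*-a_ja_j^*$, and collapse the determinant ratio to a $2\times2$ determinant via the Weinstein--Aronszajn identity; the identifications $a_i^*G^{-1}a_i=l_{0i}$, $a_i^*G^{-1}a_j=C_{0ij}$ and the sign bookkeeping in $D=\mathrm{diag}(1,-1)$ all check out, and the observation that $l_{0j}=(C_0)_{jj}$ for $j\leq n$ (diagonal of the projector $\hat A_0\hat A_0^+$) is consistent with how the algorithm actually computes $B$. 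The paper instead performs the swap in two steps --- append row $i$ using Lemma \ref{maxvol2-lem}, then delete row $j$ --- tracking how the vector $l$ and the matrix $C$ transform under each step, and multiplying the two volume factors $(1+l_{0i})$ and $(1-l_j)^{-1}$ with $l_j=l_{0j}-|C_{0ij}|^2/(1+l_{0i})$. Your derivation is shorter and self-contained as a proof of the formula for $B_{ij}$; what the paper's longer route buys is precisely the intermediate update formulas for $C$ and $l$ (equations (\ref{cjt-eq})--(\ref{eq:lj-last})), which are not a detour but the actual content needed to implement the $O(Mn)$-per-step updates in {\myfont Dominant-C}. If one only wants the lemma statement, your argument is the cleaner choice; if one also wants the algorithm, the paper's two-step derivation does double duty.
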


\begin{proof}
To prove the Lemma, we need to derive the fast update formulas. 
For this, we use the update formulas from the original {\myfont maxvol2} (algorithm \ref{maxvol2-alg}).

The update is done in 2 steps. 
First, we append the $i$-th row, and then remove $j$-th row. 
Since we already know how to append, let us only focus on the removal.

Firstly, note that appending the $i$-th row reduces its elements $1 + l_{0i}$ times. 
To check this, substitute $i$ in the update formula for $C$ (line \ref{m2C} in the original {\myfont maxvol2} algorithm). 
Removing the $j$-th row increases the elements $1 + \tilde l_j$ times (the tilde denotes the value after the removal).
\begin{equation}\label{cjt-eq}
  \tilde C_{j,:} = C_{j,:} (1 + \tilde l_j)
\end{equation}

From the update formula for $l$, we get that
\begin{align}
  l_j & = \tilde l_j - |C_j'|^2 (1 + \tilde l_j) \nonumber \\
       & = \tilde l_j - |C_{j,:} C_{j,:}^*|^2 / (1 + \tilde l_j) \nonumber \\
       & = \tilde l_j - \tilde l_j^2 / (1 + \tilde l_j) \nonumber \\
       & = \frac{\tilde l_j}{1 + \tilde l_j}, \nonumber
\end{align}
which means
\begin{equation}\label{eq:l-trans}
1 + \tilde l_j = 1 / (1 - l_j).
\end{equation}

Using (\ref{cjt-eq}) and (\ref{eq:l-trans}) we obtain a formula for the update of $C$:
\begin{eqnarray}
  C & := & \tilde C - C' \tilde C_{j,:} = \tilde C - C' C_{j,:} (1 + \tilde l_j) \nonumber \\
  \tilde C & := & C + C' C_{j,:}/(1-l_j) \nonumber
\end{eqnarray}

Similarly, the update of $l$ is calculated as
\begin{eqnarray}
  l_k & := & \tilde l_k - |C'|^2 (1 + \tilde l_j) \nonumber \\
  \tilde l_k & := & l_k + |C'|^2 / (1 - l_j) \nonumber
\end{eqnarray}

After adding $i$-th row, $l_j$ depends on $l_{0j}$ as:
\begin{equation}\label{eq:lj-first}
  l_j = l_{0j} - |C'|^2 (1 + l_{0i}) = l_{0j} - |C_{0i,:} C_{0j,:}^*|^2 / (1 + l_{0i}).
\end{equation}

The product $C_{i,:} C_{j,:}^*$ is written in the element $C_{ij}$. Indeed,
\[
{C_{0i,:}}C_{0j,:}^ *  = {A_{i,:}}{\hat A_0^ + }{\left( {{{\hat A_0}^ + }} \right)^ * }A_{j,:}^ *  = {A_{i,:}}{\left( {{{\hat A_0}^ * }\hat A_0} \right)^{ - 1}}A_{j,:}^ *  = {A_{i,:}} \hat A_{0j,:}^ +  = {C_{0ij}}.
\]
Therefore, the addition of the $i$-th row changes the $j$-th row length as
\begin{equation}\label{eq:lj-last}
  l_j = l_{0j} - |C_{0ij}|^2 / (1 + l_{0i}).
\end{equation}

With the help of the update formulas, we can calculate the ratio of the volumes after the interchange. 
From the Lemma \ref{maxvol2-lem}, we know that adding the $i$-th row increases the volume $1 + l_{0i}$ times. The expression (\ref{eq:l-trans}) shows that removing the $j$ -th row results in the volume decrease $1/(1-l_j)$ times. The total change considering (\ref{eq:lj-last}) is equal to
\[
  B_{ij} := (1 + l_{0i})(1 - l_j) = |C_{0ij}|^2 + (1 + l_{0i})(1 - l_{0j}).
\]
\end{proof}

The use of the matrix $B_{ij}$ for the decision to replace the columns leads us to the next algorithm.

The complexity of the algorithm is $O(Mn \cdot iter)$.

\begin{algorithm}[h!]
\caption{{\myfont Dominant-C}}
\label{dominantc-alg}
\begin{algorithmic}[1]
\REQUIRE{Matrix $A \in \mathbb{C}^{M \times r}$, the starting set of row indices $\mathcal{I}$ of cardinality $n$. For example, $\mathcal{I} = \{1, ..., n \}$.}
\ENSURE{The updated set $\mathcal{I}$, corresponding to a dominant submatrix.}
\STATE $C := A A_{\mathcal{I},:}^+$
\FOR{$i := 1$ \TO $M$}
  \STATE $l_i := \| C_{i,:} \|_2^2 + 1$
\ENDFOR
\FOR{$i := n+1$ \TO $M$}
  \FOR{$j := 1$ \TO $n$}
    \STATE $B_{i,j} := |C_{i,j}|^2 + l_i (1 - C_{j,j})$
  \ENDFOR
\ENDFOR
\STATE $\{i,j\} := \mathop {\arg \max }\limits_{i,j} {B_{i,j}}$
\WHILE{$B_{i,j} > 1$}
  \STATE Update $C$, $l$ and $B$
  \STATE Replace $j$ with $i$ in $\mathcal{I}$
  $\{i,j\} := \mathop {\arg \max }\limits_{i,j} {B_{i,j}}$
\ENDWHILE
\end{algorithmic}
\end{algorithm}

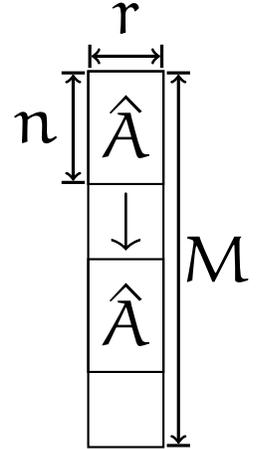
\begin{wrapfigure}{R}{0.25\textwidth}
    \begin{tikzpicture}
	\makeatletter
		\let\cs\@undefined
	\makeatother

	\newlength{\cs} 
	\setlength{\cs}{1.0cm}

	\foreach \x in {1.5\cs}
		\foreach \y in {1\cs}
	{\huge
		\draw [thick] (\x, \y) rectangle +(1\cs, 5\cs);
		\draw [thick] (\x, \y + 3.5\cs) rectangle +(1\cs, 1.5\cs);
		\draw [thick] (\x, \y + 1\cs) rectangle +(1\cs, 1.5\cs);

        \node at (\x + 0.5\cs, \y + 4.25\cs) {\( \hat A \)};
        \node at (\x + 0.5\cs, \y + 1.75\cs) {\( \hat A \)};
        \node at (\x + 0.5\cs, \y + 3\cs) {\( \downarrow \)};
		
		\node at (\x + 0.5\cs, \y + 5.7\cs) {\( r \)};
		\node at (\x - 0.7\cs, \y + 4.25\cs) {\( n \)};
		\node at (\x + 1.7\cs, \y + 2.5\cs) {\( M \)};
		
		\draw [|<->|, very thick] (\x, \y + 5.2\cs) -- (\x + 1\cs, \y + 5.2\cs);
		\draw [|<->|, very thick] (\x - 0.2\cs, \y + 3.5\cs) -- (\x - 0.2\cs, \y + 5\cs);
		
		\draw [|<->|, very thick] (\x + 1.2\cs, \y) -- (\x + 1.2\cs, \y + 5\cs);
	}
\end{tikzpicture}
    \caption{Update of $\hat A = A_{\mathcal{I},:}$ through the algorithm {\myfont Dominant-C}.}
    \label{dominantc-fig}
\end{wrapfigure}

Let's estimate the number of iterations after {\myfont pre-maxvol}.

After appending $n-r$ arbitrary rows, we get a starting $\hat A \in \mathC^{n \times r}$ submatrix for {\myfont Dominant-C} with the volume which differs from the maximum not more than $\Gamma = (rn)^{r/2}$ times (by the same reasoning as at the beginning of the proposition \ref{maxvol2-prop} proof). 
The use of {\myfont maxvol} or {\myfont maxvol2} does not improve the asymptotics, since we take logarithm from $\Gamma$, like in {\myfont maxvol} algorithm (\ref{k2-eq}).

Let the maximum row length in $C$ be $x$. The same is true for $C' = C Q_{\hat A} = A R_{\hat A}^{-1}$, where $Q_{\hat A} \in \mathC^{n \times r}$ and $R_{\hat A} \in \mathC^{r \times r}$ are the $QR$ decomposition of $\hat A$. 
We further deal with $C'$, since it has only $r$ columns.

The volume of the extended submatrix is $1 + x^2$ times greater. 
The sum of 2-volumes of all $n \times r$ submatrices is $(n-r+1)(1+x^2)$. 
Among these submatrices, there exists one with a volume not less than
\[
  \frac{(n-r+1)(1+x^2)-1}{n} \geqslant x^2 \frac{n-r+1}{n}.
\]

Thus, if the initial volume ratio of the current submatrix to the maximal volume submatrix without common rows is equal to $V = \left (\alpha^2 \frac{n^2}{r (n - r + 1)} \right)^{r / 2}$, then with each step this ratio decreases in $\alpha$ times. 
So, until $\alpha = c$ (the calculation is the same as for {\myfont maxvol})
\[
 k_2 \leqslant \left\lceil {r\ln \frac{{\ln \frac{{{r(n-r+1) \Gamma^{2/r}}}}{n^2}}}{{\ln c}}} \right\rceil \leqslant \left\lceil {r\ln \frac{{2 \ln r}}{{\ln c}}} \right\rceil
\]
steps are required.

After that, the volume is reduced by $c$ times at each step. So, no more than
\[
{k_1} \leqslant \left\lceil {r\frac{{\ln {c^2}\frac{{{n^2}}}{{r\left( {n - r + 1} \right)}}}}{{2\ln c }}} \right\rceil  \leqslant \left\lceil {r\left( {1 + \frac{{\ln n}}{{2 \ln c}}} \right)} \right\rceil  - 1
\]
additional steps are required. Summarizing and taking into account that one rounding is enough, we obtain
\[
k \leqslant r\left( {1 + \frac{{\ln n}}{{2 \ln c}}} \right) + r\ln \left( \frac{2 \ln r}{\ln c} \right).
\]
If $c = const > 1$, then
\[
  it = O(r \ln n).
\]

In addition to being an intermediate step in the cross approximation, {\myfont Dominant-C} is interesting in itself. 
Firstly, it allows to guarantee the cross approximation accuracy estimates in the 2-norm  (if used in the first $r$ left and right singular vectors, see Theorem 2 of \cite{me}) and in the $C$-norm (Theorem 4.7 of \cite{Mich}), because these estimates are based on Lemma \ref{first-lem} and require locally maximum volume submatrices.
Such estimates are not guaranteed by the {\myfont maxvol2} algorithm even if some approximation of rank $r$ is known. 
Moreover, the limited step number makes the algorithm competitive with many algorithms from \cite{SubSel} for finding a submatrix with a small norm of the pseudoinverse. 
The comparison is presented in subsection \ref{appsubsec}. 

\begin{wrapfigure}{L}{0.45\textwidth}
    \begin{tikzpicture}
	\makeatletter
		\let\cs\@undefined
	\makeatother

	\newlength{\cs} 
	\setlength{\cs}{1.0cm}

	\foreach \x in {1.5\cs}
		\foreach \y in {1\cs}
	{\huge
		\draw [thick] (\x, \y) rectangle +(5\cs, 2\cs);
		\draw [thick] (\x, \y) rectangle +(1\cs, 2\cs);
		\draw [thick] (\x + 2\cs, \y) rectangle +(1\cs, 2\cs);

        \node at (\x + 0.5\cs, \y + 1\cs) {\( \hat A \)};
        \node at (\x + 1.5\cs, \y + 1\cs) {\( \rightarrow \)};
        \node at (\x + 2.5\cs, \y + 1\cs) {\( \hat A \)};
		
		\node at (\x - 0.7\cs, \y + 1\cs) {\( m \)};
		\node at (\x + 0.5\cs, \y + 2.7\cs) {\( r \)};
		\node at (\x + 2.5\cs, \y - 0.7\cs) {\( N \)};
		
		\draw [|<->|, very thick] (\x - 0.2\cs, \y) -- (\x - 0.2\cs, \y + 2\cs);
		\draw [|<->|, very thick] (\x, \y + 2.2\cs) -- (\x + 1\cs, \y + 2.2\cs);
		
		\draw [|<->|, very thick] (\x, \y - 0.2\cs) -- (\x + 5\cs, \y - 0.2\cs);
	}
\end{tikzpicture}
    \caption{Update of $\hat A = A_{:,\mathcal{J}}$ through the algorithm {\myfont Dominant-R}.}
    \label{dominantr-fig}
\end{wrapfigure}
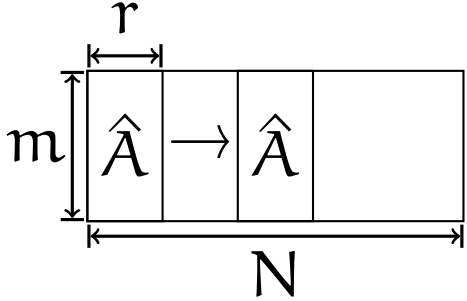

Now, to find the dominant rectangular submatrix, it only remains to present the algorithm {\myfont Dominant-R} (see figure \ref{dominantr-fig}). 
We need to maximize the volume by replacing one of the $r$ columns. 
It can be done with the $RRQR$ algorithm from \cite{rrqr}. We call this $RRQR$ algorithm $GEQR$ by the first letters of the authors' surnames.
{\myfont Dominant-R} is its modification with the same asymptotics, but with the reduced coefficient due to the specifics of the matrix size ($N \gg n$). We use the same notation as in \cite{rrqr} for the Lemma and the algorithm.

\begin{lemma}[\cite{rrqr}, Lemma 3.1]
Let
\[
M = Q\left[ {\begin{array}{*{20}{c}}
  A&B \\ 
  {}&C^T 
\end{array}} \right] \in {\mathbb{R}^{m \times N}},
\]
where $QA$ is a $QR$ decomposition of the submatrix $\hat A \in \mathbb{R}^{m \times r}$ of the matrix $M$.
Then, replacing the $j$-th column of $\hat A$ by the $i$-th column of $M$ changes the squared volume of $\hat A$ by a factor of
\[
  m_{ij} = |(A^{-1}B)_{ij}|^2 + \gamma_i \omega_j.
\]
Here $\gamma$ denotes the vector of the squared $2$-norms of the columns of $C^T$, and $\omega$ is the vector of the squared $2$-norms of the rows of $A^{-l}$.
\end{lemma}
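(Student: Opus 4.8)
The plan is to reduce, via the orthogonal invariance of the $2$-volume, to the explicit block matrix $Q^{T}M$ written in the statement, and then to evaluate a single $r\times r$ determinant. Left-multiplying $M$ by $Q^{T}$ leaves every column-submatrix volume unchanged, so in what follows I replace $M$ by that block matrix; then $\cV(\hat A)=|\det A|$, and the $i$-th column of $M$ becomes a vector with top block $\beta_{i}$ (the $i$-th column of $[A\ B]$) and bottom block $\chi_{i}$ (so $\|\chi_{i}\|_{2}^{2}=\gamma_{i}$, and $\chi_{i}=0$ when $i\le r$). After replacing the $j$-th column, the new submatrix $\hat A'$ has top block $A^{(j)}$ — the matrix $A$ with its $j$-th column overwritten by $\beta_{i}$ — and bottom block $\chi_{i}e_{j}^{T}$, which vanishes outside the $j$-th column.

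The heart of the argument is that the Gram matrix then splits as
\[
  (\hat A')^{T}\hat A'=(A^{(j)})^{T}A^{(j)}+\gamma_{i}\,e_{j}e_{j}^{T},
\]
a rank-one correction supported at the $(j,j)$ position, because the bottom block of $\hat A'$ has a single nonzero column. Expanding the determinant by linearity in the $j$-th row gives $\cV^{2}(\hat A')=\det\!\big((A^{(j)})^{T}A^{(j)}\big)+\gamma_{i}\,\mathrm{cof}_{jj}\!\big((A^{(j)})^{T}A^{(j)}\big)$. For the first term, Cramer's rule gives $\det A^{(j)}=\det A\cdot(A^{-1}\beta_{i})_{j}=\det A\cdot(A^{-1}B)_{ij}$, so it equals $(\det A)^{2}\,|(A^{-1}B)_{ij}|^{2}$. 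The cofactor is the Gram determinant of $A^{(j)}$ with column $j$ deleted, which is the same as the Gram determinant of $A$ with column $j$ deleted; by the adjugate (cofactor) expression for the inverse it equals $\det(A^{T}A)\cdot\big((A^{T}A)^{-1}\big)_{jj}=(\det A)^{2}\,\|e_{j}^{T}A^{-1}\|_{2}^{2}=(\det A)^{2}\,\omega_{j}$. Dividing through by $\cV^{2}(\hat A)=(\det A)^{2}$ yields exactly $m_{ij}=|(A^{-1}B)_{ij}|^{2}+\gamma_{i}\omega_{j}$.

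Almost everything here is bookkeeping; the one place where care is needed is that $A^{(j)}$ may be singular — precisely when $(A^{-1}B)_{ij}=0$ — so one cannot use the tempting Sherman–Morrison formula for $(A^{(j)})^{-1}$, nor the matrix-determinant lemma for $\det\!\big((A^{(j)})^{T}A^{(j)}+\gamma_{i}e_{j}e_{j}^{T}\big)$, without a case split. Organising the computation through the multilinear expansion above sidesteps this: it never inverts $A^{(j)}$, only $A$, so the identity holds uniformly and passes the obvious checks ($m_{jj}=1$; $m_{ij}=0$ when $i\le r$, $i\ne j$, since then $\hat A'$ has a repeated column; $m_{ij}=\gamma_{i}\omega_{j}$ when $\beta_{i}$ lies in the span of the columns of $A$ other than the $j$-th). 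The identical argument with conjugate transposes covers the complex case and accounts for the modulus in $|(A^{-1}B)_{ij}|^{2}$.
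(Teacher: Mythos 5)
Your argument is correct, and it is worth noting that the paper itself gives no proof of this lemma at all: it is imported verbatim from Gu--Eisenstat \cite{rrqr} (their Lemma 3.1), so there is nothing in the text to compare against line by line. Your derivation is a sound, self-contained replacement. The reduction to $Q^{T}M$ by orthogonal invariance, the observation that the Gram matrix of the modified column set is $(A^{(j)})^{T}A^{(j)}+\gamma_{i}e_{j}e_{j}^{T}$, the row-multilinearity expansion $\det(G+\gamma_{i}e_{j}e_{j}^{T})=\det G+\gamma_{i}\,\mathrm{cof}_{jj}(G)$, Cramer's rule for $\det A^{(j)}$, and the identification $\mathrm{cof}_{jj}(A^{T}A)=(\det A)^{2}\|e_{j}^{T}A^{-1}\|_{2}^{2}=(\det A)^{2}\omega_{j}$ are all valid, and your remark that this organisation avoids inverting the possibly singular $A^{(j)}$ (where Sherman--Morrison or the matrix determinant lemma would need a case split) is exactly the right point of care. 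This differs in flavour from the source: Gu and Eisenstat work with the triangular factor directly and interpret $1/\sqrt{\omega_{j}}$ as the distance from the $j$-th column to the span of the others, deriving the factor from how the orthogonal component of the incoming column decomposes; your Gram-determinant route is more elementary and handles the degenerate cases uniformly, at the cost of not exposing the geometric meaning of $\omega_{j}$ that the updating formulas in the full {\myfont Dominant-R} listing exploit. One cosmetic caveat: the statement as printed pairs $\gamma_{i}\omega_{j}$ with $|(A^{-1}B)_{ij}|^{2}$, so the two index conventions are mutually transposed (one of $i,j$ must index a column of $B$ and the other a column of $A$); you silently adopt the paper's convention via $(A^{-1}\beta_{i})_{j}=(A^{-1}B)_{ij}$, which is fine, but a one-line remark fixing which index ranges over $\overline{1,r}$ and which over the remaining columns would make the proof unambiguous.
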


We set the approximation rank, and perform column replacements. 
The new columns can be appended by {\myfont pre-maxvol}.

\begin{algorithm}[h!]
\caption{{\myfont Dominant-R} (The full version is a modification of the column replacement in $RRQR$ algorithm from \cite{rrqr})}
\label{dominantr-alg}
\begin{algorithmic}[1]
\REQUIRE{Matrix $M \in \mathbb{R}^{m \times N}$, the starting set of column indices $\mathcal{I}$ of cardinality $r$ (for example, $\mathcal{I} = \{1, ..., r \}$}), threshold parameter $f \geqslant 1$. 
\ENSURE{The updated set $\mathcal{I}$, corresponding to a dominant rectangular submatrix.}
\STATE $Q, A := QR(M_{:,\mathcal{I}})$
\STATE $Q := Q_{:,1:r}$
\FOR{$i := 1$ \TO $N-r$}
  \STATE $\gamma_i := \| M_{:,i+r} \|_2^2 - \| (Q^T M)_{:,i+r} \|_2^2$
\ENDFOR
\FOR{$i := 1$ \TO $r$}
  \STATE $\omega_i := \| A_{i,:}^{-1} \|_2^2$
\ENDFOR
\STATE $A^{-1}B := A^{-1} \cdot B$
\STATE $m := \omega^T \gamma$
\FOR{$i := 1$ \TO $r$}
  \FOR{$j := 1$ \TO $N-r$}
    \STATE $m_{i,j} := m_{i,j} + | (A^{-1}B)_{i+r,j+r}|^2$
  \ENDFOR
\ENDFOR
\STATE $\{i1,j1\} := \mathop {\arg \max }\limits_{i1,j1} {m_{i1,j1}}$
\WHILE{$m_{i1,j1} > f$}
  \STATE Replace $i1$ with $j1$ in $\mathcal{I}$
  \STATE Update $Q$ and $A$
  \STATE Update $\gamma$, $\omega$ and $A^{-1}B$
  \STATE $m := \omega^T \gamma$
  \FOR{$i := 1$ \TO $r$}
    \FOR{$j := 1$ \TO $N-r$}
      \STATE $m_{i,j} := m_{i,j} + | (A^{-1}B)_{i+r,j+r}|^2$
    \ENDFOR
  \ENDFOR
  \STATE $\{i1,j1\} := \mathop {\arg \max }\limits_{i1,j1} {m_{i1,j1}}$
\ENDWHILE
\end{algorithmic}
\end{algorithm}

Similar to {\myfont maxvol} we get that {\myfont Dominant-R} has complexity (see the notation in the algorithm) $O(Nmr^2 \frac {\ln r} {\ln f})$ if applied after {\myfont pre-maxvol}. This improves the original $GEQR$ \cite{rrqr} complexity of $O(Nmr^2 \frac {\ln N} {\ln f})$.

\begin{theorem}
Let (transposed) {\myfont pre-maxvol} and {\myfont Dominant-R} be successively applied to the matrix $A \in \mathbb{R}^{m \times N}$. Then after $O(r \log_f r)$ steps {\myfont Dominant-R} outputs a submatrix $\hat A \in \mathbb{R}^{m \times r}$ with $QR$ decomposition $\hat A = \tilde Q \tilde R$ such that $Q = \tilde Q$ and $R = \tilde Q^T A$ are a strong rank-revealing $QR$ decomposition with a factor $f$ (see definition in \cite{rrqr}) up to column permutation.
\end{theorem}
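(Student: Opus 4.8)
The plan is to show two things: first, that the while-loop of \textbf{Dominant-R} terminates after $O(r\log_f r)$ steps when started from the output of (transposed) \textbf{pre-maxvol}; and second, that its termination condition $m_{i1,j1}\le f$ is exactly the definition of a strong rank-revealing $QR$ factorization with factor $f$, as formulated in \cite{rrqr}.

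\textbf{Step 1 (iteration count).} I would run the same argument already used above for \textbf{Dominant-C} and \textbf{maxvol}, only with the roles of rows and columns exchanged and with the factor $f$ playing the role of $c$. By Lemma~5 (the \cite{rrqr} Lemma 3.1 quoted above), a single column swap with $m_{ij}>f$ multiplies $\cV(\hat A)^2$ by a factor $m_{ij}>f$, so the volume strictly increases by at least a factor $f$ per step; hence the number of swaps is at most $\log_f\Gamma$, where $\Gamma$ is the ratio of the maximal $m\times r$ column-submatrix volume to the volume of the starting submatrix. Since (transposed) \textbf{pre-maxvol} is applied first, Proposition~\ref{pre-prop} gives $\Gamma\le r!$, so $\log_f\Gamma\le \log_f r! = O(r\log_f r)$. (Exactly as in the \textbf{maxvol} analysis, the more refined two-phase bound $k_1+k_2$ could be invoked, but the crude $\log_f r!$ bound already yields the claimed $O(r\log_f r)$; I would just cite the \textbf{maxvol} computation (\ref{k2-eq})–(\ref{it_est}) with $\Gamma\le r!$.) This also justifies the complexity claim $O(Nmr^2\ln r/\ln f)$ stated just before the theorem.

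\textbf{Step 2 (characterization of the output).} On termination we have $m_{ij}\le f$ for all $i\in\overline{1,r}$, $j\in\overline{1,N-r}$, i.e.
\[
 \bigl|(A^{-1}B)_{ij}\bigr|^2 + \gamma_i\,\omega_j \le f^2
\]
in the notation of Lemma~5 (writing $f^2$ for the quantity the algorithm compares against, consistently with the squared-volume scaling). I would recall the definition of a strong RRQR from \cite{rrqr}: $A\Pi = \tilde Q\tilde R$ with $\tilde R=\begin{bmatrix}\tilde R_{11}&\tilde R_{12}\\&\tilde R_{22}\end{bmatrix}$ is strong rank-revealing with factor $f$ when every entry of $\tilde R_{11}^{-1}\tilde R_{12}$ is bounded by $f$ and, for each $i,j$, $\bigl(\tilde R_{11}^{-1}\tilde R_{12}\bigr)_{ij}^2 + \bigl(\omega_i(\tilde R_{11})\,\gamma_j(\tilde R_{22})\bigr)^2$ is bounded by $f^2$ — which is precisely the combined quantity $m_{ij}$. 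With $Q=\tilde Q$ and $R=\tilde Q^{T}A$ as in the statement, the block $A^{-1}B$ equals $\tilde R_{11}^{-1}\tilde R_{12}$, $\omega$ collects the squared row norms of $\tilde R_{11}^{-1}$, and $\gamma$ the squared column norms of $\tilde R_{22}$; so the loop's exit test is literally the strong-RRQR inequality up to the column permutation recorded in $\mathcal I$. I would then note that the singular-value separation bounds (the usual $\sigma_k(\tilde R_{11})\ge \sigma_k(A)/q(f,r)$ and $\sigma_j(\tilde R_{22})\le \sigma_{r+j}(A)\,q(f,r)$ type estimates) follow from $m_{ij}\le f$ by the purely linear-algebraic lemmas of \cite{rrqr}, which I would cite rather than reprove.

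\textbf{Main obstacle.} The delicate point is matching conventions: \cite{rrqr} phrases its bound on $\omega_i\gamma_j$ without the square and measures volume change slightly differently, whereas here $m_{ij}$ is defined as a ratio of \emph{squared} volumes, so I must be careful that the algorithm's threshold $f$ corresponds to the paper \cite{rrqr}'s factor (possibly after replacing $f$ by $f^2$ or $\sqrt f$) and that the "$\tilde Q$ has only $r$ columns" truncation $Q:=Q_{:,1:r}$ is consistent with $R=\tilde Q^{T}A$ reproducing $[\tilde R_{11}\ \tilde R_{12}]$. Everything else — the induction-free volume-monotonicity argument for Step 1, and the identification of the three bookkeeping vectors $A^{-1}B,\omega,\gamma$ with the RRQR blocks for Step 2 — is routine once the $\Gamma\le r!$ input guarantee from \textbf{pre-maxvol} is in hand.
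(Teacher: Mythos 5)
Your proposal is correct and follows essentially the same route as the paper: bound the iteration count by combining the $\Gamma\leqslant r!$ guarantee of \textbf{pre-maxvol} with the fact that each swap with $m_{ij}>f$ increases the squared volume by at least a factor $f$ (giving $\log_f\Gamma^2=O(r\log_f r)$; note your $\log_f\Gamma$ misses the square, which is harmless for the asymptotic), and observe that the stopping criterion coincides with the $GEQR$ criterion from \cite{rrqr}, so termination yields a strong rank-revealing $QR$ by definition. Your Step~2 merely spells out in more detail what the paper dispatches in one sentence.
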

\begin{proof}
If {\myfont Dominant-R} with the parameter $f$ stops, then the output submatrix $\hat A$ indeed produces a strong rank-revealing $QR$ decomposition, because the update criteria is the same as in $GEQR$ \cite{rrqr}.

The number of steps is limited, because {\myfont pre-maxvol} outputs a submatrix with the volume ratio $\Gamma \leqslant r!$ to the maximum volume among all $m \times r$ (we apply {\myfont pre-maxvol} to $A^T$, so $m \geqslant r$) submatrices and the squared volume ratio is decreased at least by a factor $f$ at each step, so the number of iterations is bounded by
\[
  iter \leqslant \log_f \Gamma^2 \leqslant \log_f \left( r! \right)^2 = O(r \log_f r).
\]
\end{proof}

The total complexity follows from $O(Nmr)$ complexity of each {\myfont Dominant-R} iteration (see full version description in the Appendix). And the complexity of {\myfont pre-maxvol} is lower.

Now we combine the algorithms {\myfont Dominant-C} and {\myfont Dominant-R} into a form similar to the one for {\myfont maxvol}. 
This combination allows us to search for the dominant rectangular submatrix in the entire matrix. 

The algorithm {\myfont maxvol-rect} consists of two parts. 
We search for a large volume submatrix in the fixed rows and a large volume submatrix in the fixed columns (see figure \ref{maxvolrect-fig}). 
Without loss of generality, we assume that there are more rows than columns.
Otherwise, the algorithm differs only in transposition.

\begin{figure}[H]
\center
    \begin{tikzpicture}
	\makeatletter
		\let\cs\@undefined
	\makeatother

	\newlength{\cs} 
	\setlength{\cs}{1.0cm}

	\foreach \x in {1.5\cs}
		\foreach \y in {1\cs}
	{\huge
		\draw [thick] (\x, \y) rectangle +(5\cs, 5\cs);
		\draw [thick] (\x + 1\cs, \y) rectangle +(1\cs, 5\cs);
		\draw [thick] (\x + 3\cs, \y) rectangle +(1\cs, 5\cs);
		\draw [thick] (\x, \y + 3\cs) rectangle +(5\cs, 1.5\cs);
		\draw [thick] (\x, \y + 0.5\cs) rectangle +(5\cs, 1.5\cs);

        \node at (\x + 3.5\cs, \y + 3.75\cs) {\( \hat A \)};
		\node at (\x + 1.5\cs, \y + 1.25\cs) {\( \hat A \)};
		\node at (\x + 3.5\cs, \y + 1.25\cs) {\( \hat A \)};
		\node at (\x + 3.5\cs, \y + 2.5\cs) {\( \downarrow \)};
		\node at (\x + 2.5\cs, \y + 1.25\cs) {\( \leftarrow \)};
		
		\node at (\x + 1.5\cs, \y + 5.7\cs) {\( r \)};
		\node at (\x + 5.7\cs, \y + 1.25\cs) {\( n \)};
		\node at (\x - 0.7\cs, \y + 2.5\cs) {\( M \)};
		\node at (\x + 2.5\cs, \y - 0.7\cs) {\( N \)};
		
		\draw [|<->|, very thick] (\x + 1\cs, \y + 5.2\cs) -- (\x + 2\cs, \y + 5.2\cs);
		\draw [|<->|, very thick] (\x + 5.2\cs, \y + 0.5\cs) -- (\x + 5.2\cs, \y + 2\cs);
		
		\draw [|<->|, very thick] (\x - 0.2\cs, \y) -- (\x - 0.2\cs, \y + 5\cs);
		\draw [|<->|, very thick] (\x, \y - 0.2\cs) -- (\x + 5\cs, \y - 0.2\cs);
	}
\end{tikzpicture}
    \caption{Update of  current $\hat A = A_{\mathcal{I},\mathcal{J}}$ through the algorithm {\myfont maxvol-rect}.}
    \label{maxvolrect-fig}
\end{figure}
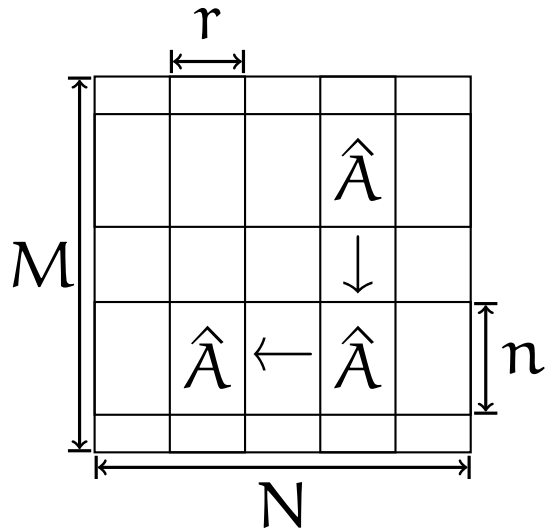

\begin{algorithm}[H]
\caption{{\myfont maxvol-rect}}
\label{maxvolrect-alg}
\begin{algorithmic}[1]
\REQUIRE{Matrix $A \in \mathbb{R}^{M \times N}$, the starting sets of row indices $\mathcal{I}$ and column indices $\mathcal{J}$ of cardinality $n$ and $r$ respectively.}
\ENSURE{The updated sets $\mathcal{I}$ and $\mathcal{J}$, corresponding to the dominant rectangular submatrix of rank $r$.
}
\STATE \COMMENT{In the beginning {\myfont pre-maxvol} can be applied.}
\STATE $changed := $ \TRUE
\STATE $old\_changed := $ \TRUE
\WHILE{$changed$}
  \STATE $changed := $ \FALSE
  \FOR{$changes\_in$ \textbf{in} $\{\mathcal{I}, \mathcal{J}\}$}
    \IF{$changes\_in = \mathcal{I}$}
      \STATE $C := A_{:,\mathcal{J}}$
      \STATE $changed := \operatorname{Dominant-C}(C, \mathcal{I})$ \COMMENT{Here a small addition to the algorithm is needed, which returns whether there were exchanges}
      \STATE $old\_changed := changed$
    \ELSE
      \STATE $R := A_{\mathcal{I},:}$
      \STATE $changed := \operatorname{Dominant-R}(R, \mathcal{J})$
    \ENDIF
    \IF{((\NOT $changed$) \AND ($changes\_in = \mathcal{J}$))}
      \STATE $changed = $ \FALSE
      \STATE \textbf{break}
    \ENDIF
    \IF{((\NOT $old\_changed$) \AND ($changes\_in = \mathcal{I}$))}
      \STATE $changed = $ \FALSE
      \STATE \textbf{break}
    \ENDIF
  \ENDFOR
\ENDWHILE
\end{algorithmic}
\end{algorithm}

\subsection{Large projective volume search}\label{maxvolprojsubsec}

Finally, we search for $m \times n$ submatrix of large $r$-projective volume by using $r \times n$ and $m \times r$ submatrices of large volume. This is the {\myfont maxvol-proj} algorithm. The construction idea is illustrated in figure \ref{proj-pic}.

\begin{algorithm}[H]
\caption{{\myfont maxvol-proj}}
\label{maxvolproj-alg}
\begin{algorithmic}[1]
\REQUIRE{Matrix $A \in \mathbb{R}^{M \times N}$, starting sets of row indices $\mathcal{I}$ and column indices $\mathcal{J}$ columns $r$ and final sizes of $C$ and $R$ ($n$ and $m$, respectively).
}
\ENSURE{The set $\mathcal{I}$ of the row indices of $R$ and the set $\mathcal{J}$ of the column indices of $C$ that contain a submatrix of a large projective volume.
}
\STATE $\operatorname{maxvol-rect} (A, \mathcal{I}, -)$
\STATE $\operatorname{maxvol-rect} (A^T, \mathcal{J}, -)$
\end{algorithmic}
\end{algorithm}

The informal justification is as follows. Suppose that $\rank A = r$ and some $r \times n$ and $m \times r$ submatrices are submatrices of maximal volume. Then it is easy to see that a submatrix $\hat A$ at the intersection of the $m$ rows and $n$ columns is indeed a submatrix of the maximal $r$-projective volume.

Thus, we reduced the problem of finding a submatrix of a large projective volume to the search for two smaller submatrices of large 2-volume. Thankfully, we already know how to find rectangular submatrices of large 2-volume by using {\myfont maxvol-rect}.

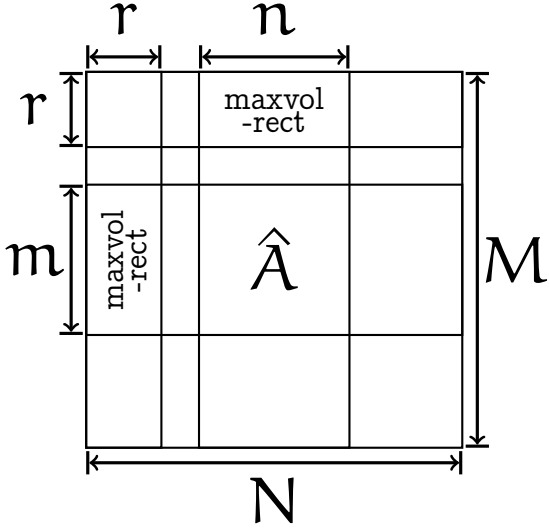
\begin{SCfigure}
    \begin{tikzpicture}
	\makeatletter
		\let\cs\@undefined
	\makeatother

	\newlength{\cs} 
	\setlength{\cs}{1.0cm}

	\foreach \x in {1.5\cs}
		\foreach \y in {1\cs}
	{\huge
		\draw [thick] (\x, \y) rectangle +(5\cs, 5\cs);
		\draw [thick] (\x, \y) rectangle +(1\cs, 5\cs);
		\draw [thick] (\x + 1.5\cs, \y) rectangle +(2\cs, 5\cs);
		\draw [thick] (\x, \y + 4\cs) rectangle +(5\cs, 1\cs);
		\draw [thick] (\x, \y + 1.5\cs) rectangle +(5\cs, 2\cs);

        \node at (\x + 2.5\cs, \y + 2.5\cs) {\( \hat A \)};
		
		\node at (\x + 0.5\cs, \y + 5.7\cs) {\( r \)};
		\node at (\x - 0.7\cs, \y + 2.5\cs) {\( m \)};
		\node at (\x - 0.7\cs, \y + 4.5\cs) {\( r \)};
		\node at (\x + 2.5\cs, \y + 5.7\cs) {\( n \)};
		\node at (\x + 5.7\cs, \y + 2.5\cs) {\( M \)};
		\node at (\x + 2.5\cs, \y - 0.7\cs) {\( N \)};
		
		{\tiny
		\node [align = center] at (\x + 2.5\cs, \y + 4.5\cs) {\normalsize maxvol\\ \normalsize -rect};
		\node [rotate = 90, align = center] at (\x + 0.5\cs, \y + 2.5\cs) {\normalsize maxvol\\ \normalsize -rect};
		}
		
		\draw [|<->|, very thick] (\x, \y + 5.2\cs) -- (\x + 1\cs, \y + 5.2\cs);
		\draw [|<->|, very thick] (\x + 1.5\cs, \y + 5.2\cs) -- (\x + 3.5\cs, \y + 5.2\cs);
		\draw [|<->|, very thick] (\x - 0.2\cs, \y + 4\cs) -- (\x - 0.2\cs, \y + 5\cs);
		\draw [|<->|, very thick] (\x - 0.2\cs, \y + 1.5\cs) -- (\x - 0.2\cs, \y + 3.5\cs);
		
		\draw [|<->|, very thick] (\x + 5.2\cs, \y) -- (\x + 5.2\cs, \y + 5\cs);
		\draw [|<->|, very thick] (\x, \y - 0.2\cs) -- (\x + 5\cs, \y - 0.2\cs);
	}
\end{tikzpicture}
    \caption{Submatrix $\hat A = A_{\mathcal{I},\mathcal{J}}$ returned by {\myfont maxvol-proj} algorithm.}\label{proj-pic}
\end{SCfigure}

\subsection{Improvements and simplifications}\label{impsubsec}

Algorithms {\myfont Dominant-C} and {\myfont Dominant-R} can work several times slower than {\myfont maxvol}. 
Firstly, $n$ appears instead of $r$ in the asymptotics. 
Secondly, the constant factor increases. 
Below we present an algorithm to find the large projective volume submatrix without {\myfont Dominant-R} (see figure \ref{cgr-fig}). 
It can be used either after {\myfont maxvol-proj} or independently.

\paragraph*{Algorithm {\myfont maxvol-proj} without {\myfont Dominant-R}}
\begin{algorithmic}[1]
\REQUIRE{Matrix $A \in \mathbb{C}^{M \times N}$, the starting sets of row indices $\mathcal{I}$ and column indices $\mathcal{J}$ of cardinality $r$, and final sizes $n$ and $m$ of $C$ and $R$ respectively.}
\ENSURE{The updated sets $\mathcal{I}$ (of the row indices of $R$) and $\mathcal{J}$ (of the column indices of $C$) corresponding to a submatrix of a large projective volume.
}
\STATE $changed := $ \TRUE
\STATE $old\_changed := $ \TRUE
\WHILE{$changed$}
  \STATE $changed := $ \FALSE
  \FOR{$changes\_in$ \textbf{in} $\{\mathcal{I}, \mathcal{J}\}$}
    \STATE $U, S, V := \operatorname{SVD} (A_{\mathcal{I},\mathcal{J}})$
    \IF{$changes\_in = \mathcal{I}$}
      \STATE $C := A_{:,\mathcal{J}} (V_{\{1, ..., r \},:})^T$
      \STATE $changed := \operatorname{Dominant-C}(C, \mathcal{I})$ \COMMENT{Here a small addition to the algorithm is needed, which returns whether there were exchanges}
      \STATE $old\_changed := changed$
    \ELSE
      \STATE $R := (U_{:,\{1, ..., r \}})^T A_{\mathcal{I},:}$
      \STATE $changed := \operatorname{Dominant-C}(R^T, \mathcal{J})$
    \ENDIF
    \IF{((\NOT $changed$) \AND ($changes\_in = \mathcal{J}$))}
      \STATE $changed = $ \FALSE
      \STATE \textbf{break}
    \ENDIF
    \IF{((\NOT $old\_changed$) \AND ($changes\_in = \mathcal{I}$))}
      \STATE $changed = $ \FALSE
      \STATE \textbf{break}
    \ENDIF
  \ENDFOR
\ENDWHILE
\end{algorithmic}

Multiplication of the columns by the first $r$ right singular vectors of $\hat A$ leads to the problem of finding the large $r$-projective volume submatrix in $C \hat A_r^+$. 
At the same time, the projective volume increases. 
Indeed, if
\[
  \cVr(\hat A' \hat A_r^+) > \cVr(\hat A \hat A_r^+) = 1,
\]
then since
\[
  \cVr(\hat A') \cVr(\hat A_r^+) \geqslant \cVr(\hat A \hat A_r^+),
\]
we get 
\[
  \cVr(\hat A') > 1 / \cVr(\hat A_r^+) = \cVr(\hat A).
\]

For the rows, the situation is similar. 
If $m = r$ or $n = r$, this algorithm is a simplified version of {\myfont maxvol-rect}. 
In this case the row replacements occur only if $|(A^{-1}B)_{ij}| > 1$, but not under the condition $m_{ij} > 1$, which holds more frequently.

Another version of the projective volume maximization \cite{me} is based on the application of the algorithm {\myfont maxvol2}. 

\begin{algorithm}[H]
\caption{Fast $CGR$ approximation search (2 maxvol2)}\label{fastcgr-alg}
\begin{algorithmic}[1]
\REQUIRE{Matrix $A \in \mathbb{C}^{M \times N}$, the starting sets of row indices $\mathcal{I}$ and column indices $\mathcal{J}$ of cardinality $r$, and final sizes $n$ and $m$ of $C$ and $R$ respectively.}
\ENSURE{The updated sets $\mathcal{I}$ (of the row indices of $R$) and $\mathcal{J}$ (of the column indices of $C$) corresponding to a submatrix of a large projective volume.
}
\STATE $\operatorname{maxvol} (A, \mathcal{I}, \mathcal{J})$
\STATE $C := A_{:,\mathcal{J}}$
\STATE $R := A_{\mathcal{I},:}$
\STATE $\operatorname{maxvol2} (C, \mathcal{I}, n)$
\STATE $\operatorname{maxvol2} (R^T, \mathcal{J}, m)$
\end{algorithmic}
\end{algorithm}

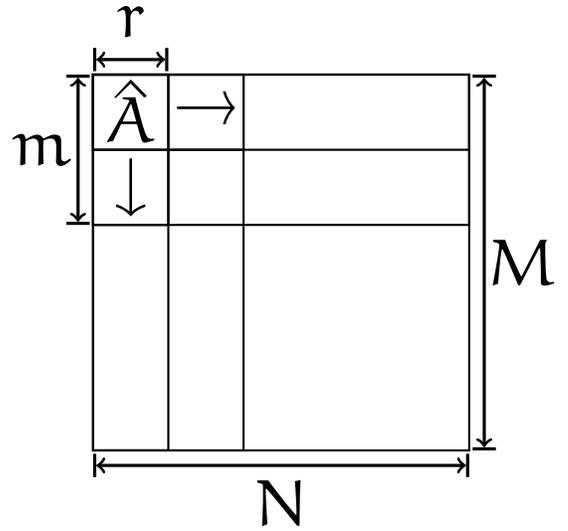
\begin{wrapfigure}{R}{0.5\textwidth}
    \begin{tikzpicture}
	\makeatletter
		\let\cs\@undefined
	\makeatother

	\newlength{\cs} 
	\setlength{\cs}{1.0cm}

	\foreach \x in {1.5\cs}
		\foreach \y in {1\cs}
	{\huge
		\draw [thick] (\x, \y) rectangle +(5\cs, 5\cs);
		\draw [thick] (\x, \y) rectangle +(1\cs, 5\cs);
		\draw [thick] (\x, \y) rectangle +(2\cs, 5\cs);
		\draw [thick] (\x, \y + 3\cs) rectangle +(1\cs, 2\cs);
		\draw [thick] (\x, \y + 4\cs) rectangle +(5\cs, 1\cs);
		\draw [thick] (\x, \y + 3\cs) rectangle +(5\cs, 2\cs);
		\draw [thick] (\x, \y + 4\cs) rectangle +(2\cs, 1\cs);

        \node at (\x + 0.5\cs, \y + 4.5\cs) {\( \hat A \)};
        \node at (\x + 1.5\cs, \y + 4.5\cs) {\( \rightarrow \)};
        \node at (\x + 0.5\cs, \y + 3.5\cs) {\( \downarrow \)};
		
		\node at (\x + 0.5\cs, \y + 5.7\cs) {\( r \)};
		\node at (\x - 0.7\cs, \y + 4\cs) {\( m \)};
		\node at (\x + 5.7\cs, \y + 2.5\cs) {\( M \)};
		\node at (\x + 2.5\cs, \y - 0.7\cs) {\( N \)};
		
		\draw [|<->|, very thick] (\x, \y + 5.2\cs) -- (\x + 1\cs, \y + 5.2\cs);
		\draw [|<->|, very thick] (\x - 0.2\cs, \y + 3\cs) -- (\x - 0.2\cs, \y + 5\cs);
		
		\draw [|<->|, very thick] (\x + 5.2\cs, \y) -- (\x + 5.2\cs, \y + 5\cs);
		\draw [|<->|, very thick] (\x, \y - 0.2\cs) -- (\x + 5\cs, \y - 0.2\cs);
	}
\end{tikzpicture}
    \caption{Fast $CGR$: extending $\hat A = A_{\mathcal{I},\mathcal{J}}$ (obtained by {\myfont maxvol}) by two executions of {\myfont maxvol2}.}
    \label{cgr-fig}
\end{wrapfigure}

Combining all the estimates together, and taking into account the low probability to start from a low volume submatrix, we obtain the following overall complexity if we use Householder-based version of {\myfont maxvol2} (algorithm \ref{hmaxvol2-alg} in the Appendix)
\[
\begin{gathered}
\approx O\left( (M+N)r^2 \cdot IT + \right. \hfill \\
  + \left. (M+N)r \cdot iter_{maxvol} + (M+N)(m+n)r \right) \hfill
\end{gathered}
\]

Let's require the condition with $c = const > 1$  instead of the submatrix dominance  (although even for $c = 1$ the number of the row replacements in {\myfont maxvol} is of order $r$). Then the complexity estimate becomes
\[
\approx O\left( (M+N)r^2 \ln r \cdot IT + (M+N) (m+n)r \right)
\]

This algorithm works faster because all the row and column replacements are produced by the algorithm {\myfont maxvol}, and not by {\myfont Dominant-C} and {\myfont Dominant-R}. 
Also, appending a single row or column is cheaper than replacing. 
Nevertheless, numerical experiments show that the approximation by this algorithm is worse than by {\myfont maxvol-proj}.

Let us shortly mention some other ways to simplify and speed up the algorithm. 
The number of switches between rows and columns can be restricted (high approximation accuracy and large volume are already achieved in step 4). 
The choice $c \gg 1$ can guarantee a small number of replacements (although in practice $c = 1$ works fast enough). 
The criterion for substitutions can be simplified (through the replace of the condition $B_{ij} > 1$ by $|C_{ij}| > 1$ or $(1 + l_i)(1-l_j) > 1$, thus avoiding the calculation of $B$).
The number of replacements can be limited directly (for example, by the number $n$, which also does not greatly aggravate the accuracy). 

In addition, the {\myfont maxvol} algorithm can be replaced by analogs. 
For example, one can obtain a set of rows and columns with just {\myfont pre-maxvol} or the Bebendorf algorithm \cite{Beb}. 
The use of the latter also gives the necessary rank (estimated from above) that guarantees the required accuracy. 

\subsection{Application to the subset selection problem}\label{appsubsec}

Algorithms {\myfont maxvol} and {\myfont Dominant-C} can be used to search for the columns containing a strongly nondegenerate submatrix. 
The obtained estimates of the step number allow to compare them with algorithms from \cite{SubSel}.
By the item 2 of Lemma \ref{first-lem}, choice of the submatrix $\hat A \in \mathC^{r \times n}$ in the rows $R \in \mathC^{r \times N}$ guarantees the inequality
\[
  \|\hat A^+ R\|_2 \leqslant \sqrt{1 + \frac{r(N-n)}{n-r+1}}.
\]
It immediately follows that
\[
  \sigma_k(\hat A^+) \leqslant \sigma_k(R^+) \sqrt{1 + \frac{r(N-n)}{n-r+1}}.
\]
For the Frobenius norm, using another expression from item 2 of the Lemma \ref{first-lem}, we obtain 
\begin{equation}\label{AR_F2}
  \|\hat A^+\|_F \leqslant \|R^+\|_2 \sqrt{r + \frac{r(N-n)}{n-r+1}}.
\end{equation}

The table \ref{tabl_mat2} contains the estimates of $\|\hat A^+\|_2$ and $\|\hat A^+\|_F$ for {\myfont maxvol} and {\myfont Dominant-C}, and the estimates for the algorithms from \cite{SubSel}. 
Note that the algorithm $GEQR$ \cite{rrqr}, equivalent to {\myfont maxvol} in case of $k=r$ columns, has already been mentioned in \cite{SubSel}.  
However, we proved a better estimate on the step number.

The algorithms are sorted by the number of operations. 

In order to simplify the comparison for {\myfont Dominant-C}, we give the case $n = 2r-1$ and $c = 2$.
It is assumed that the {\myfont pre-maxvol} algorithm is used before {\myfont maxvol} or {\myfont Dominant-C}, thus we can estimate the number of operations.

\begin{table}[ht]
\caption{Methods for finding a strongly nondegenerate rectangular submatrix $\hat A \in \mathC^{r \times n}$ in the rows $R \in \mathC^{r \times N}$.}\label{tabl_mat2}
\begin{center}
\small
\begin{tabular}{|c|c|c|c|}
\hline
Method & $\|\hat A^+\|_F^2 / \|R^+\|_F^2$ & $\|\hat A^+\|_2^2 / \|R^+\|_2^2$ & Complexity \\
\hline
Theorem 3.7 ($\delta = 1/2$) from & \multirow{2}{*}{$4N$} & \multirow{2}{*}{$4N$} & \multirow{2}{*}{$O(N r^2 + n \log n)$} \\
\cite{SubSel}, $n \geqslant 32k \ln(4k)$ & & & \\
\hline
{\myfont maxvol} & \multirow{2}{*}{$\left(1 + c(N-r)\right)\frac{r \|R^+\|_2}{\|R^+\|_F}$} & \multirow{2}{*}{$1 + cr(N-r)$} & \multirow{2}{*}{$O(Nr^2 \log r / \log c)$} \\
\cite{maxvol}, $n = r$ & & & \\
\hline
{\myfont Dominant-C} & \multirow{2}{*}{$\left(2\frac{N+1}{r}-4\right)\frac{r \|R^+\|_2}{\|R^+\|_F}$} & \multirow{2}{*}{$2(N-2r)+3$} & \multirow{2}{*}{$O(N r^2 \log r)$} \\
$n = 2r-1$, $c = 2$ & & & \\
\hline
{\myfont Dominant-C} & \multirow{2}{*}{$\left(1 + \frac{N-n}{n-r+1}\right)\frac{r \|R^+\|_2}{\|R^+\|_F}$} & \multirow{2}{*}{$1 + r\frac{N-n}{n-r+1}$} & \multirow{2}{*}{$O(Nnr \log n / \log c)$} \\
$n \geqslant r$ & & & \\
\hline
Theorem 3.11 ($\delta = 1/2$) from & \multirow{2}{*}{$c(N-r+1)$} & \multirow{2}{*}{$cr(N-r+1)$} & \multirow{2}{*}{$O(N r^3 / \log c)$} \\
\cite{SubSel}, $n = r$ & & & \\
\hline
Theorem 3.5 from & \multirow{2}{*}{$\frac{(1 + \sqrt{\frac{N}{n}})^2}{(1 - \sqrt{\frac{r}{n}})^2}$} & \multirow{2}{*}{$\frac{(1 + \sqrt{\frac{N}{n}})^2}{(1 - \sqrt{\frac{r}{n}})^2}$} & \multirow{2}{*}{$O(N n r^2)$} \\
\cite{SubSel}, $n > r$ & & & \\
\hline
Theorem 3.1 from & \multirow{2}{*}{$\frac{N-r+1}{n-r+1}$} & \multirow{2}{*}{$r \frac{N-r+1}{n-r+1}$} & \multirow{2}{*}{$O(N r^2 + N(N-n)r)$} \\
\cite{SubSel}, $n \geqslant r$ & & & \\
\hline
Cons. 3.3 from & \multirow{2}{*}{$\frac{N-r+1}{n-r+1} \cdot \frac{r \|R^+\|_2}{\|R^+\|_F}$} & \multirow{2}{*}{$1 + r\frac{N-n}{n-r+1}$} & \multirow{2}{*}{$O(N r^2 + N(N-n)r)$} \\
\cite{SubSel}, $n \geqslant r$ & & & \\
\hline
\end{tabular}
\end{center}
\end{table}

The last two algorithms are based on column removals. 
Since their complexities depend on $N$ quadratically, the application is possible only for small matrices. 

Only the algorithms which are at least about $r$ times slower give better estimates of $\|\hat A^+\|_2$ and $\|\hat A^+\|_F$ than {\myfont maxvol} and {\myfont Dominant-C}. 
For the Frobenius norm, {\myfont maxvol} and {\myfont Dominant-C} have worse guarantees only if the ratio $\frac{r \|R^+\|_2}{\|R^+\|_F}$ is not close to 1. 
In addition, if {\myfont maxvol} or {\myfont Dominant-C} selects a submatrix with almost equal singular values, then the factor $r$ in the 2-norm estimate disappears (see (\ref{AR_F2})).

When comparing the complexity, we see that the only algorithm from \cite{SubSel} that can compete with {\myfont Dominant-C} in speed, leads to a larger error and requires significantly more columns, while its number of operations is only $O(\log r)$ times smaller.

\section{Numerical experiments}\label{exp-sec}

In this section, we study the efficiency of the above algorithms on the cross approximation problem.

In matrix $A \in \mathbb{C}^{M \times N}$ we select $n \geq r$ columns $C \in \mathbb{C}^{M \times n}$ and $m \geq r$ rows $R \in \mathbb{C}^{m \times N}$, and the cross approximation  has the form $CGR,$ with the {\it approximation generator} $G \in \mathbb{C}^{n \times m},$ of rank at most $r$.

Generally, the errors for cross approximations are calculated in the 2-norm and the Frobenius norm. 
The existing lower bounds \cite{lowerCCA} do not allow to guarantee the high approximation accuracy in the 2-norm. 
For the Frobenius norm, the result has been recently obtained with the same coefficient as for the $C$-norm in \cite{me}. Unfortunately, it does not apply to the maximum volume submatrices.

\begin{theorem}[\cite{dan2}]
For any matrix $A \in \mathbb{C}^{M \times N}$ there exists a skeleton approximation $C \hat {A }^{-1} R$ such that
\begin{equation}\label{eq:main}
\|A - C \hat{A}^{-1} R\|_F \leq \left(r + 1\right) \|A - A_r\|_F
\end{equation}
\end{theorem}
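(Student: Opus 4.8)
The plan is to rewrite the skeleton error as a Schur complement and then bound it by an averaging (volume‑sampling) argument that uses only the Cauchy--Binet formula and Lemma~\ref{for-lem}. We may assume $\operatorname{rank} A\geqslant r$, since otherwise $A=A_r$ and a smaller exact skeleton already works. Fix index sets $I,J$ of size $r$ with $\hat A:=A_{I,J}$ invertible and permute rows and columns so that $\hat A$ occupies the top‑left $r\times r$ corner, with remaining blocks $B$ (top‑right), $D$ (bottom‑left), $F$ (bottom‑right). A one‑line computation shows that $C\hat A^{-1}R$ agrees with $A$ in those $r$ rows and $r$ columns and equals $D\hat A^{-1}B$ in the remaining block, so $A-C\hat A^{-1}R$ is zero outside that block, where it is the Schur complement $A/\hat A=F-D\hat A^{-1}B$; hence $\|A-C\hat A^{-1}R\|_F=\|A/\hat A\|_F$. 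Sylvester's determinant identity applied to the bordered $(r{+}1)\times(r{+}1)$ minors gives $(A/\hat A)_{ij}=\det A_{I\cup\{i\},\,J\cup\{j\}}/\det A_{I,J}$, so
\[
  \|A-C\hat A^{-1}R\|_F^2=\frac{1}{|\det A_{I,J}|^2}\sum_{i\notin I}\,\sum_{j\notin J}\bigl|\det A_{I\cup\{i\},\,J\cup\{j\}}\bigr|^2 .
\]

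Next I would average over $(I,J)$ drawn with probability proportional to $|\det A_{I,J}|^2$. Its normalization is $Z=\sum_{I,J}|\det A_{I,J}|^2$, and by Cauchy--Binet ($\sum_{J}|\det A_{I,J}|^2=\det(A_{I,:}A_{I,:}^*)={\cal V}^2(A_{I,:})$) together with Lemma~\ref{for-lem}, $Z=\sum_{|I|=r}{\cal V}^2(A_{I,:})=e_r(\sigma_1^2,\ldots,\sigma_{\min(M,N)}^2)>0$. Multiplying the displayed identity by the sampling probability the factor $|\det A_{I,J}|^2$ cancels, and in the resulting sum each $(r{+}1)\times(r{+}1)$ minor $|\det A_{I',J'}|^2$ appears exactly $(r{+}1)^2$ times — once for every choice of the deleted row from $I'$ and the deleted column from $J'$ — while terms with $\det A_{I,J}=0$ only make the full sum larger. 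Applying Cauchy--Binet and Lemma~\ref{for-lem} once more, now with rank $r{+}1$, yields
\[
  \mathbb{E}\,\|A-C\hat A^{-1}R\|_F^2\leqslant(r+1)^2\,\frac{e_{r+1}(\sigma_1^2,\ldots,\sigma_{\min(M,N)}^2)}{e_r(\sigma_1^2,\ldots,\sigma_{\min(M,N)}^2)} .
\]

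To finish, I would invoke the elementary inequality $e_{r+1}(x)\leqslant e_r(x)\,(x_{r+1}+x_{r+2}+\cdots)$ for $x_1\geqslant x_2\geqslant\cdots\geqslant 0$, proved by grouping the $(r{+}1)$‑subsets by their largest index $m\geqslant r+1$ so that $e_{r+1}(x)=\sum_{m>r}x_m\,e_r(x_1,\ldots,x_{m-1})\leqslant e_r(x)\sum_{m>r}x_m$. Taking $x_i=\sigma_i^2$ gives $e_{r+1}/e_r\leqslant\sum_{i>r}\sigma_i^2=\|A-A_r\|_F^2$, hence $\mathbb{E}\,\|A-C\hat A^{-1}R\|_F^2\leqslant(r+1)^2\|A-A_r\|_F^2$, and therefore some realization of $(I,J)$, necessarily with $\hat A$ invertible, attains $\|A-C\hat A^{-1}R\|_F\leqslant(r+1)\|A-A_r\|_F$.

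The main obstacle is not any individual estimate (each of the steps is short) but the bookkeeping that glues them together: pinning down the Schur‑complement/Sylvester identity with the correct normalization, legitimately comparing the sum restricted to nonsingular $\hat A$ with the full sum over all $(r{+}1)$‑minors, and verifying that the multiplicity is exactly $(r{+}1)^2$. One should also treat the rank‑deficient case and clarify the meaning of ``skeleton'' there. Everything else reduces to the two applications of Cauchy--Binet with Lemma~\ref{for-lem} and the symmetric‑function inequality.
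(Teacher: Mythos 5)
Your argument is correct: the Schur-complement identity for the skeleton error, the volume-sampling average with the $(r{+}1)^2$ multiplicity count, the two applications of Cauchy--Binet together with Lemma~\ref{for-lem}, and the inequality $e_{r+1}\leqslant e_r\sum_{i>r}\sigma_i^2$ all check out, and restricting to nonsingular $\hat A$ only discards nonnegative terms. Note, however, that the paper itself offers no proof of this theorem --- it is imported by citation from \cite{dan2} --- so there is nothing internal to compare against; your proof is essentially the volume-sampling argument of that reference (in the spirit of \cite{LowerBounds}), and no gap remains beyond the degenerate case $\operatorname{rank}A<r$, which you already dispose of.
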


Still, we hope that this estimate and its analog for the projective volume are often achieved on the matrices obtained by the algorithms maximizing the volume and the projective volume. This is the hypothesis, which we will try to check numerically.

The rest of this section is split into subsections, where we make a statement about the quality of the approximations and show the corresponding numerical results.

\subsection{Projective volume can lead to approximations, arbitrary close to SVD}

The complexity of {\myfont maxvol-proj} is linear in matrix size, while SVD is cubic, so the ability to replace SVD by {\myfont maxvol-proj} with is a small multiplicative error can be very useful.

Let us represent the cross approximation error in the form
\begin{equation}\label{eps-eq}
  \|A - CGR\|_F = (1 + \varepsilon) \|A - A_r\|_F.
\end{equation}
The question is, how many rows and columns and how much time it takes to achieve (\ref{eps-eq})?

We will show, that {\myfont maxvol-proj} can produce approximations with the error $(1+\varepsilon) \|A - A_r\|_F$ in $O(Nr^2 / \varepsilon^2)$ time. Fast $CGR$-approximation construction (algorithm \ref{fastcgr-alg}) with Householder-based {\myfont maxvol2} (algorithm \ref{hmaxvol2-alg} in the Appendix) has complexity $O(Nr^2 / \varepsilon)$, but should be used with caution, because {\myfont maxvol2} provides fewer guarantees than {\myfont maxvol-rect}. Nevertheless, we will see that it still works almost as well as {\myfont maxvol-proj}.

We hypothesize the expected ratio of the obtained error to the best approximation error (in Frobenius norm) to be the same as for the case of the $C$-norm in \cite{me}
\begin{equation}\label{hyp-eq}
  {\mathbb{E}_{U,V}} \| A - C \hat A_r^+ R \|_F \leqslant \sqrt{\left( 1 + \frac{r}{m-r+1} \right) \left( 1 + \frac{r}{n-r+1} \right)} \| A - A_r \|_F
\end{equation}
in the same notations as before. Hereinafter $U$ and $V$ in the expectation denote random unitary matrices. They are also left and right singular vectors of $A$. We hope that by increasing $n$ and $m$ we can reach arbitrary small $\varepsilon$ and that $1/\varepsilon$ depends linearly on the size of the submatrix. Hereinafter we limit the number of swaps by $2n = 2m$, so there is no logariphmic factor in complexity.

It is important to note that all the algorithms in this paper do not observe the entire matrix and thus there are no guarantees for the worst case.

Figure \ref{new-fig} shows that the value $1/\varepsilon$ indeed depends linearly on the submatrix size on the average, and even is larger than expected ($\varepsilon \approx \frac{r}{n-r+1}$ in (\ref{hyp-eq})). However, there are still cases when the error coefficient $1 + \varepsilon$ is larger than $1 + \frac{r}{n-r+1}$ (when $1/\varepsilon$ is smaller than $\frac{n-r+1}{r}$), which is also expected. Nevertheless, they appear only in the case, when singular values of the matrix quickly decrease. We discuss how to significantly decrease the error in this case at the end of this section. Finally, fast $CGR$ (2 maxvol2) also shows linear dependence, which means it often successfully finds large projective volume submatrices, in spite of fewer guarantees.

\begin{figure}[ht]
\begin{subfigure}[b]{0.49\textwidth}
\centering
\includegraphics[width=\columnwidth]{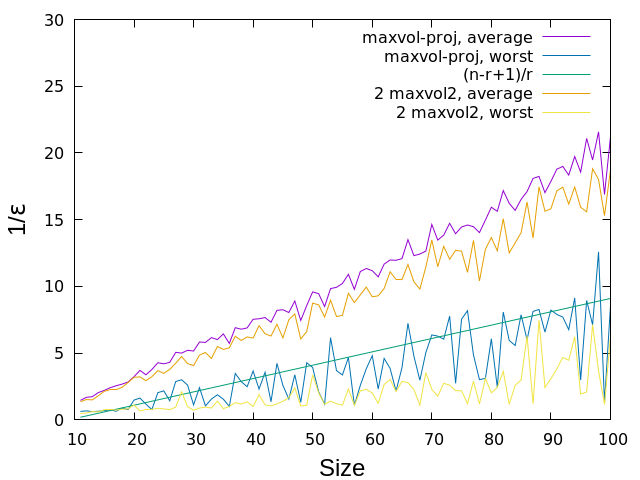}
\caption{The $k$-th singular value of the matrix equals $1/2^k$.}
\end{subfigure}
\begin{subfigure}[b]{0.49\textwidth}
\centering
\includegraphics[width=\columnwidth]{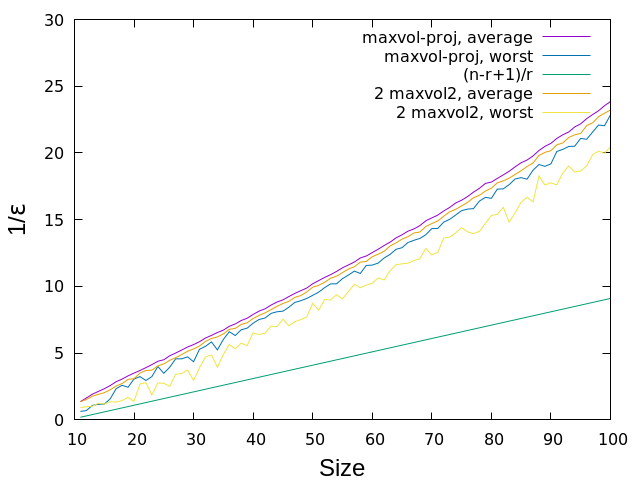}
\caption{First $r$ singular values are equal to $20$, the rest are $1$.}
\end{subfigure}
\caption{
Dependence of $1/\varepsilon$ (\ref{eps-eq}) on the number of rows and columns in the submatrix, used in different projective volume search algorithms.
Matrix size $400 \times 400$, rank $r = 10$, matrix singular values are described under the figures.
The results are averaged over $100$ random matrix generations. Note that $1/\varepsilon$ is plotted, so larger values mean smaller error. $1/\varepsilon$ for largest error (``worst'' case) among these $100$ random matrices is also plotted.}
\label{new-fig}
\end{figure}

As we see, the hypothetical estimate (\ref{hyp-eq}) is rather rough for the average case.
However, it is possible to construct an estimate much closer to reality, but less justified.

\subsection{Average error estimates}

Hereinafter in numerical experiments we consider 3 cases: $m = n = r$, $m = 2r, n = r$ and $m = n = 2r$, denoted on the figures by $r * r$, $2r * r$ and $2r * 2r$.
According to our hypothesis, the corresponding errors are estimated as 
\begin{eqnarray}
  {\mathbb{E}_{U,V}} \| A - C \hat A^{-1} R \|_F & \leqslant & (r+1) \| A - A_r \|_F, \nonumber \\
  {\mathbb{E}_{U,V}} \| A - C \hat A^+ R \|_F & \leqslant & \sqrt{2r+1} \| A - A_r \|_F, \nonumber \\
  {\mathbb{E}_{U,V}} \| A - C \hat A_r^+ R \|_F & \leqslant & \left(1 + \frac{r}{r+1} \right) \| A - A_r \|_F. \nonumber
\end{eqnarray}

By item 2 of Lemma \ref{first-lem}, the coefficients like $\sqrt{\frac{n+1}{n-r+1}}$ come from the the Frobenius norm of the pseudoinverse of some submatrix $\hat U \in \mathbb{C}^{n \times r}$. 
For the 2-norm estimates, we use the 2-norm of the pseudoinverse. 
Using the equations from item 2 of the Lemma \ref{first-lem} to replace  $\sqrt{\frac{n + 1}{n - r + 1}}$, we obtain the following estimate:
\begin{equation}\label{advappr}
  {\mathbb{E}_{U,V}} \| A - C \hat A_r^+ R \|_F \approx {\mathbb{E}_{U',V'}} \sqrt{\left( 1 + \frac{1}{M-r} (\| (\hat U')^+ \|_F^2 - r) \right)\left( 1 + (\frac{1}{N-r} \| (\hat V')^+ \|_F^2 - r) \right)} \| A - A_r \|_F
\end{equation}
$\hat U' \in \mathC^{m \times r}$ and $\hat V' \in \mathC^{r \times n}$ here are the dominant submatrices of some random unitary matrices $U' \in \mathC^{M \times r}$ and $V' \in \mathC^{r \times N}$. 
Their sizes for each of the three cases correspond to the sizes of $\hat A$. For  $M = N = 100$, the values of the coefficients are shown in figure \ref{u2-pic}.

\begin{SCfigure}
\centering
\includegraphics[width=0.7\columnwidth]{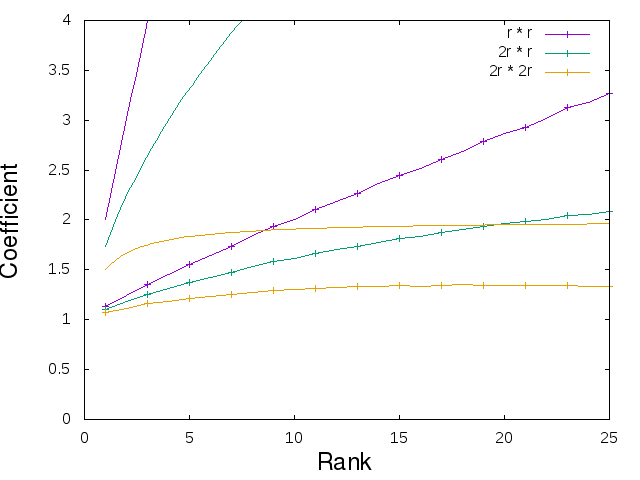}
\caption{
The coefficients before the Frobenius norm error in (\ref{advappr}) presented as a function of rank for various submatrix sizes (lines with crosses).
The corresponding estimates from (\ref{hyp-eq}) (lines without crosses). 
The sizes of unitary matrices are $100 \times r$.
The results are averaged over $100$ random matrix generations.
}
\label{u2-pic}
\end{SCfigure}

As one can see, the difference is significant. 
The following figures do not have an upper bound to avoid chaos.

Let's compare our estimates to practice.
In the figure \ref{decpic}, the submatrices are constructed by {\myfont maxvol} \cite{maxvol}, {\myfont maxvol-rect} and {\myfont maxvol-proj}.

\begin{figure}[ht]
\begin{subfigure}[b]{0.49\textwidth}
\centering
\includegraphics[width=\columnwidth]{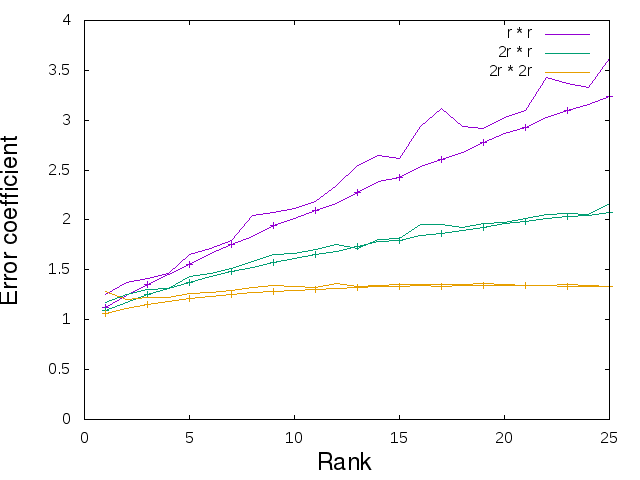}
\caption{The $k$-th singular value of the matrix equals $1/2^k$.}
\end{subfigure}
\begin{subfigure}[b]{0.49\textwidth}
\centering
\includegraphics[width=\columnwidth]{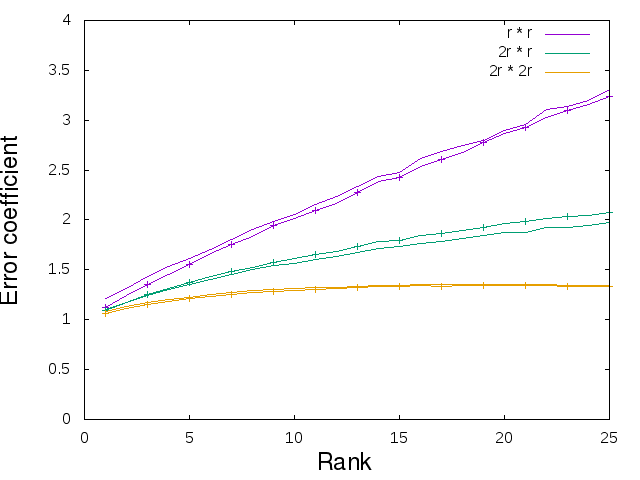}
\caption{First $r$ singular values are equal to $10$, the rest are $1$.}
\end{subfigure}
\caption{
The ratio of the cross approximation error to the best Frobenius norm error presented as a function of rank for various submatrix sizes (lines without crosses).
Estimates from (\ref{advappr}) (lines with crosses). 
Matrix size $100 \times 100$, matrix singular values are described under the figures.
The results are averaged over $100$ random matrix generations.}
\label{decpic}
\end{figure}

The upper bound estimate always holds, and the estimate (\ref{advappr}) is reasonably accurate.
We note that the relative error is less in the case of equal singular values of the error. 
The approximation is accurate even though in both cases the Frobenius norm of the error is comparable with the minimum singular value of the approximation. Note that the average error coefficient of the algorithms does not depend on the singular values of the error. The quality of the approximation is the same, whether the singular values of $A-A_r$ are small and quickly decrease, or they are large (compared to the singular values of $A_r$) and do not decrease after $r$-th at all.

If the ratio of singular values becomes about $1/1000$, the graphs visually coincide. 

Of course, the average case scenario is not too informative and we should also look at how bad the error can get and how often.

\subsection{Larger size means smaller variance}

Here we show that using more rows and columns also significantly decreases the variance and thus large errors become much rarer. To do it, we plot the distributions of the cross approximation error.

\begin{figure}[ht]
\begin{subfigure}[b]{0.49\textwidth}
\centering
\includegraphics[width=\columnwidth]{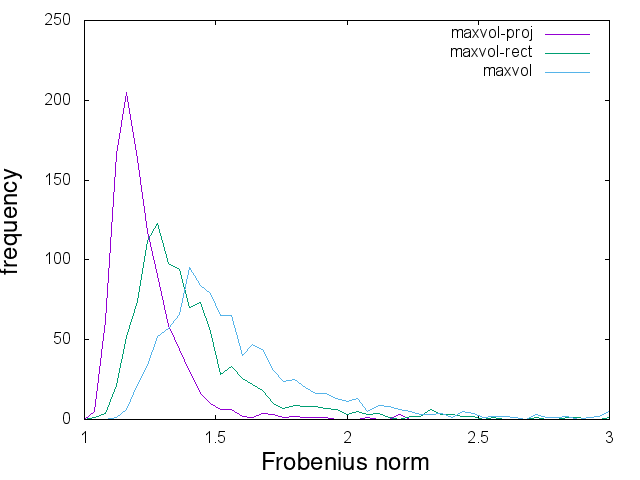}
\caption{Rank 5.}
\end{subfigure}
\begin{subfigure}[b]{0.49\textwidth}
\centering
\includegraphics[width=\columnwidth]{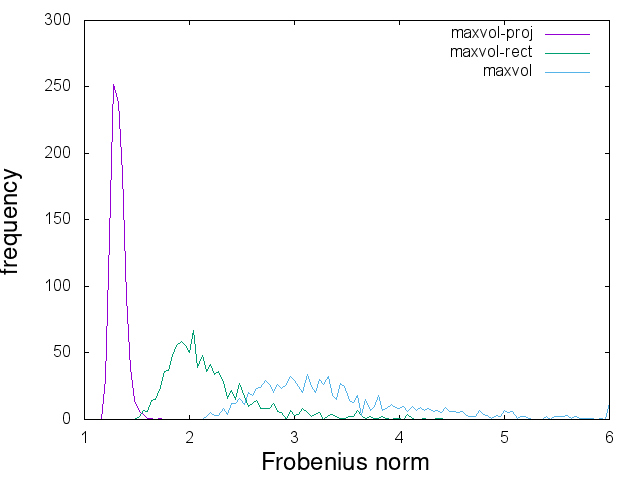}
\caption{Rank 25.}
\end{subfigure}
\caption{
Distributions of the Frobenius norms of the error for different cross approximation algorithms.
Matrix size $100 \times 100$, different values of rank $r$,
{\myfont maxvol-rect} uses $2r$ rows. 
The $k$-th singular value equals $1/2^k$.
The distribution is based on  $1000$ random matrix generations.}
\label{fhistpic}
\end{figure}

\begin{figure}[ht]
\begin{subfigure}[b]{0.49\textwidth}
\centering
\includegraphics[width=\columnwidth]{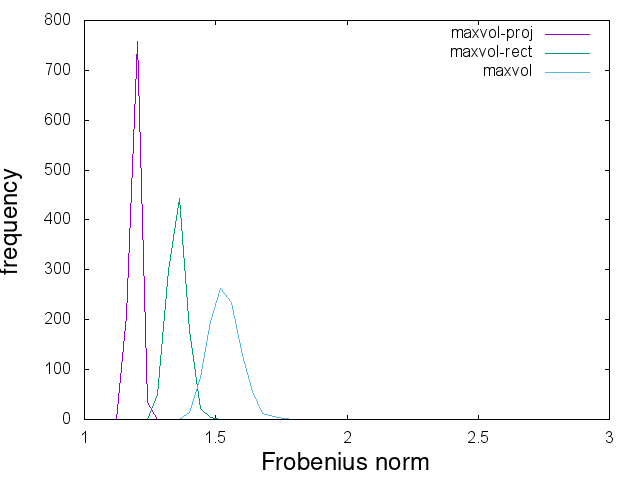}
\caption{Rank 5.}
\end{subfigure}
\begin{subfigure}[b]{0.49\textwidth}
\centering
\includegraphics[width=\columnwidth]{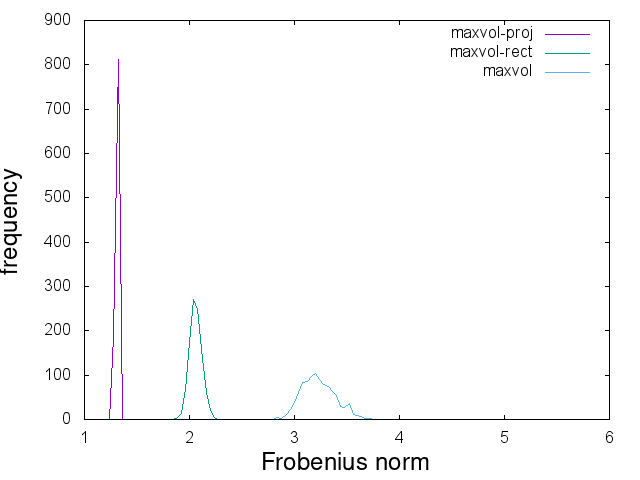}
\caption{Rank 25.}
\end{subfigure}
\caption{Distributions of the Frobenius norms of the error for different cross approximation algorithms.
Matrix size $100 \times 100$, different values of rank $r$,
{\myfont maxvol-rect} uses $2r$ rows. 
The singular values of the error are $1000$ times less than the singular values of the best approximation.
The distribution is based on $1000$ random matrix generations.
}
\label{shistpic}
\end{figure}

The error distribution histograms are shown in figures \ref{fhistpic} and \ref{shistpic}. 
It is indeed easy to see that not only the average error but also the error variance is less for {\myfont maxvol-proj}. 

\begin{figure}[ht]
\begin{subfigure}[b]{0.49\textwidth}
\centering
\includegraphics[width=\columnwidth]{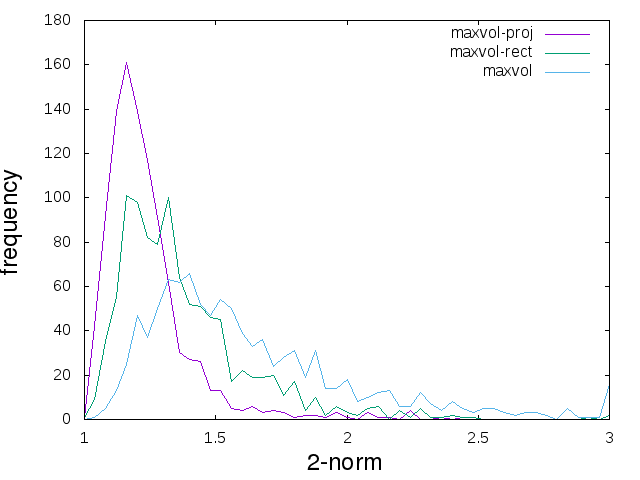}
\caption{Rank 5.}
\end{subfigure}
\begin{subfigure}[b]{0.49\textwidth}
\centering
\includegraphics[width=\columnwidth]{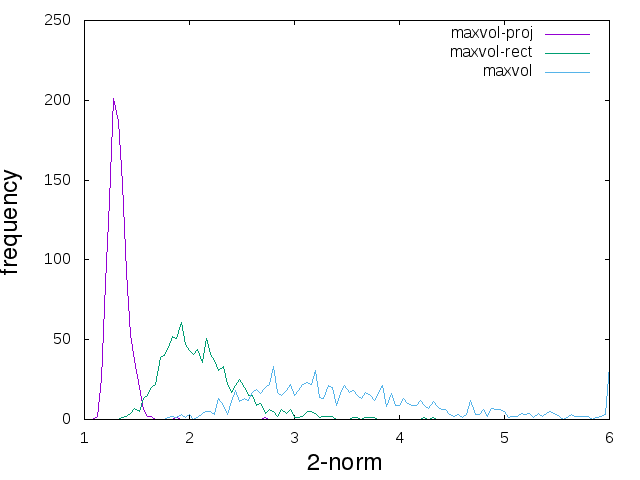}
\caption{Rank 25.}
\end{subfigure}
\caption{
Distributions of the error 2-norm for different cross approximation algorithms.
Matrix size $100 \times 100$, different values of rank $r$,
{\myfont maxvol-rect} uses $2r$ rows. 
The $k$-th singular value equals $1/2^k$.
The distribution is based on $1000$ random matrix generations.
}
\label{fhistpic2}
\end{figure}

\begin{figure}[ht]
\begin{subfigure}[b]{0.49\textwidth}
\centering
\includegraphics[width=\columnwidth]{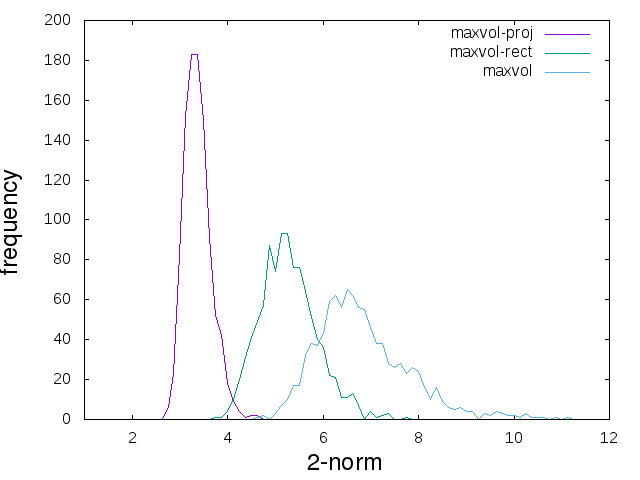}
\caption{Rank 5.}
\end{subfigure}
\begin{subfigure}[b]{0.49\textwidth}
\centering
\includegraphics[width=\columnwidth]{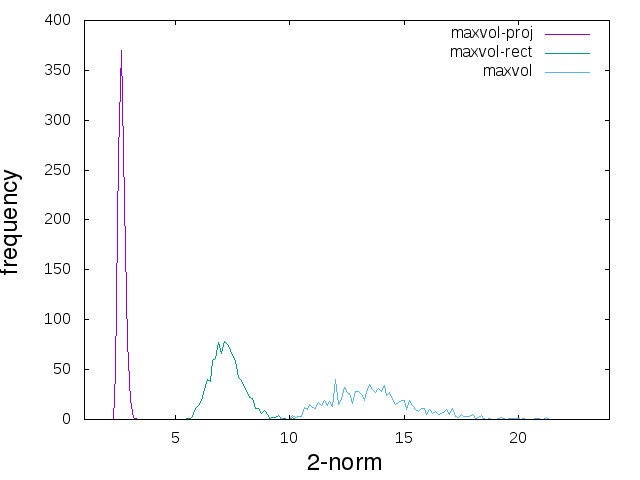}
\caption{Rank 25.}
\end{subfigure}
\caption{
Distributions of the error 2-norm for different cross approximation algorithms.
Matrix size $100 \times 100$, different values of rank $r$,
{\myfont maxvol-rect} uses $2r$ rows. 
The singular values of the error are $1000$ times less than the singular values of the best approximation.
The distribution is based on $1000$ random matrix generations.
}
\label{shistpic2}
\end{figure}

Figures \ref{fhistpic2} and \ref{shistpic2} show the similar histograms for the 2-norm. 
If the 2-norm and the Frobenius norm of the error are close, the $CGR$ approximation also has a small error. And in the case of a large difference between the error norms the approximation is much less accurate.

The $C$-norm of the error was studied in detail in \cite{me}.
Figure \ref{cnormhist} shows that the distributions of the $C$-norm and of the Frobenius norm are close, which coincides with our hypothesis that the coefficients should be similar.

\begin{wrapfigure}{R}{0.5\textwidth}
\includegraphics[width=0.5\columnwidth]{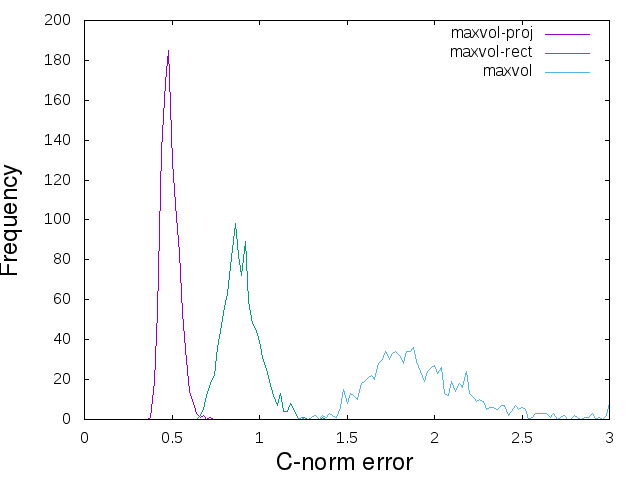}
\caption{
Distributions of the $C$-norm error for different cross approximation algorithms.
Matrix size $100 \times 100$, approximation rank $r=10$, $n = 20$.
First $10$ singular values are equal to $10$, the rest are $1$.
The distribution is based on $2000$ random matrix generations.
}
\label{cnormhist}
\end{wrapfigure}

It is also noticeable that in the case of methods with $r$-projective volume maximization, the error does not heavily depend on the particular maximization approach. 
As already mentioned, instead of {\myfont maxvol-proj}, one can quite quickly apply {\myfont maxvol2} twice to the found submatrix of size $r \times r$.

\subsection{Comparison of different projective volume maximization methods}

In this subsection, we compare different algorithms for large projective volume search to see if the exact algorithm of choosing more rows and columns is important or we can use any intuitively reasonable approach.

In addition to already presented algorithms, we examine two more methods that work asymptotically slower:
\begin{enumerate}
\item maxvol of rank 2r: 
The {\myfont maxvol} algorithm is used for the size $2r \times 2r$, and then the resulting submatrix is $r$-pseudoinversed;
\item maxvol2r: instead of the second applying of {\myfont maxvol2}, the matrix is expanded by the column $c$ containing the maximum value of $\| \hat A_r^+ c \|_2 $ (the efficiency justification can be found in \cite{me}). 
\end{enumerate}

Note that instead of the exact singular value decomposition of a submatrix, we can use an approximate technique (for example, $RRQR$).
Often (but not always!) the error does not increase much.

\begin{SCfigure}
\centering
\includegraphics[width=0.7\columnwidth]{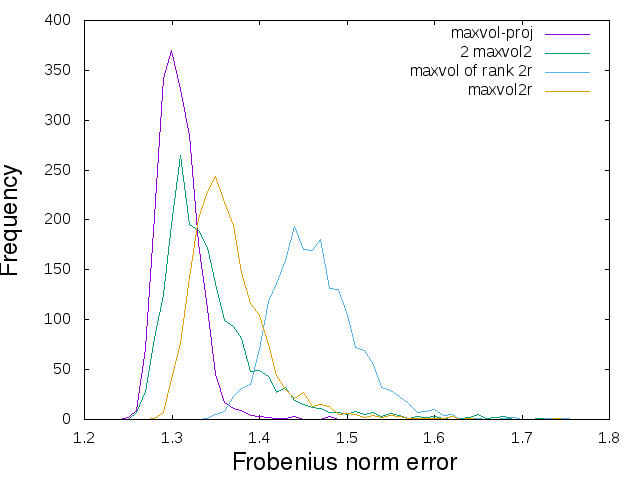}
\caption{
Distributions of the Frobenius norms of the error for different methods of $r$-projective volume maximization.
Matrix size $100 \times 100$, approximation rank $r=10$, $n = 20$.
First $r$ singular values are equal to $10$, the rest are $1$.
The distribution is based on $2000$ random matrix generations.
}\label{dif_max_gr}
\end{SCfigure}

As figure \ref{dif_max_gr} shows, {\myfont maxvol-proj} yields the best mean error and the best variance, although the difference is not too large. 

\subsection{Truncated SVD can improve the approximation in particular cases.}

Previously we have seen that the coefficient for the approximation error does not depend on the singular values of $A_r$. It must be noted, however, that in the case, when singular values decrease very quickly, the approximation quality can be further improved.

Let us consider a non-random matrix example. 
Let
\begin{equation}\label{tab_mat}
  (A_1)_{i,j} = (i^{1/3}+j^{1/3})^2 \sqrt{\frac{1}{i} + \frac{1}{j}}; \quad i,j = \overline{1,n}.
\end{equation}
It is the so-called ``ballistic core'' used in the coagulation and fragmentation problems \cite{coag}. 
Results for several matrix sizes are presented in the table \ref{tab:t1}. 
The approximation accuracy is of the order $10^{-5} - 10^{-6}$.
As always, {\myfont maxvol-rect} uses $2r$ rows. 
The starting columns and rows are selected randomly.

\begin{table}[ht]
\caption{The cross approximation errors for the matrix $A_1$ (\ref{tab_mat}).}\label{tab:t1}
\begin{center}
\small
\begin{tabular}{|c|c|c|c|c|c|}
\hline
Size & \multirow{2}{*}{SVD} & \multirow{2}{*}{{\myfont maxvol}} & \multirow{2}{*}{{\myfont maxvol-rect}} & \multirow{2}{*}{{\myfont maxvol-proj}} & $TSVD$ after {\myfont maxvol}\\
and rank &  &  &  &  & of rank $r+2$ \\
\hline
$n = 800$, $r = 12$ & $1.01 \cdot 10^{-5}$ & $5.40 \cdot 10^{-5}$ & $5.15 \cdot 10^{-5}$ & $3.23 \cdot 10^{-5}$ & $1.02 \cdot 10^{-5}$ \\
\hline
$n = 400$, $r = 11$ & $6.09 \cdot 10^{-6}$ & $2.64 \cdot 10^{-5}$ & $2.25 \cdot 10^{-5}$ & $1.59 \cdot 10^{-5}$ & $6.13 \cdot 10^{-6}$ \\
\hline
$n = 200$, $r = 10$ & $3.59 \cdot 10^{-6}$ & $1.23 \cdot 10^{-5}$ & $1.04 \cdot 10^{-5}$ & $7.09 \cdot 10^{-6}$ & $3.59 \cdot 10^{-6}$ \\
\hline
$n = 100$, $r = 9$ & $2.01 \cdot 10^{-6}$ & $5.41 \cdot 10^{-6}$ & $4.87 \cdot 10^{-6}$ & $3.35 \cdot 10^{-6}$ & $2.01 \cdot 10^{-6}$ \\
\hline
\end{tabular}
\end{center}
\end{table}

In the last column we use truncated singular value decomposition ($TSVD$) of the $C \hat A^{-1} R$ approximation of $A_1$ with rank $r+2$ (which takes $O(Nr^2)$), so that
\[
  A_1 \approx \left(C \hat A^{-1} R\right)_r, \quad \hat A \in \mathbb{R}^{(r+2) \times (r+2)}.
\]
We see that with this approach
\[
  \|A_1 - \left(C \hat A^{-1} R\right)\|_F \leqslant 1.01 \|A_1 - (A_1)_r\|_F,
\]
though we used only $r+2$ rows and columns. The reason for this fact lies in the fast decrease of singular values of $A_1$.

Suppose we want to construct some rank-$r$ approximation of $A$ with another arbitrary matrix $\tilde A$. Let $P_r$ be an orthogonal projection on the first left singular vectors of $A$, so that $P_r A = A_r$. Then
\[
\begin{gathered}
  {\left\| {A - {{\left( {\tilde A} \right)}_r}} \right\|_F} \leqslant {\left\| {A - \tilde A} \right\|_F} + {\left\| {\tilde A - {{\left( {\tilde A} \right)}_r}} \right\|_F} \\ 
   \leqslant {\left\| {A - \tilde A} \right\|_F} + {\left\| {\left( {I - {P_r}} \right)\tilde A} \right\|_F} \\ 
   \leqslant {\left\| {A - \tilde A} \right\|_F} + {\left\| {\left( {I - {P_r}} \right)A} \right\|_F} + {\left\| {\left( {I - {P_r}} \right)\left( {A - \tilde A} \right)} \right\|_F} \\ 
   \leqslant 2{\left\| {A - \tilde A} \right\|_F} + {\left\| {A - {A_r}} \right\|_F}. \\ 
\end{gathered}
\]
For $\tilde A = CGR$ with rank $r + k$ approximation error $\|A - \tilde A\|_F \tilde \|A - A_{r+k}\|_F$ is much smaller than $\|A - A_r\|_F$ when singular values quickly decrease, so we get the error close to $\|A - A_r\|_F$ with only a few additional rows and columns.

Nevertheless, for the closely distributed singular values, the truncated $SVD$ of the constructed approximation no longer gives any advantages, and {\myfont maxvol-proj} becomes more efficient. 
For such an example, we replace the singular values of the error of $A_1$ by equal numbers.
That is, consider a matrix 
\begin{equation}\label{tab_mat2}
{A_2} = {U_1}{\Sigma _2}{V_1},
\end{equation}
such that
\[
{A_1} = {U_1}{\Sigma _1}{V_1}
\]
and
\[
\begin{gathered}
  {\sigma _i}\left( {{\Sigma _2}} \right) = {\sigma _i}\left( {{\Sigma _1}} \right),\quad i = \overline {1,r} , \hfill \\
  {\sigma _i}\left( {{\Sigma _2}} \right) = {\sigma _j}\left( {{\Sigma _2}} \right),\quad i,j > r, \hfill \\
  {\left\| {({A_1}) - {{({A_1})}_r}} \right\|_F} = {\left\| {({A_2}) - {{({A_2})}_r}} \right\|_F}. \hfill \\ 
\end{gathered} 
\]
Thus, $SVD$-based approximation gives the same error for $A_1$ and $A_2$.

\begin{table}[ht]
\caption{The cross approximation errors for the matrix $A_2$ (\ref{tab_mat2}).}\label{tab:t2}
\begin{center}
\small
\begin{tabular}{|c|c|c|c|c|c|}
\hline
Size & \multirow{2}{*}{SVD} & \multirow{2}{*}{{\myfont maxvol}} & \multirow{2}{*}{{\myfont maxvol-rect}} & \multirow{2}{*}{{\myfont maxvol-proj}} & $TSVD$ after {\myfont maxvol}\\
and rank &  &  &  &  & of rank $2r$ \\
\hline
$n = 800$, $r = 12$ & $1.01 \cdot 10^{-5}$ & $2.02 \cdot 10^{-5}$ & $1.71 \cdot 10^{-5}$ & $1.44 \cdot 10^{-5}$ & $1.59 \cdot 10^{-5}$ \\
\hline
$n = 400$, $r = 11$ & $6.09 \cdot 10^{-6}$ & $1.19 \cdot 10^{-5}$ & $9.63 \cdot 10^{-6}$ & $9.01 \cdot 10^{-6}$ & $9.94 \cdot 10^{-6}$ \\
\hline
$n = 200$, $r = 10$ & $3.59 \cdot 10^{-6}$ & $6.86 \cdot 10^{-6}$ & $6.03 \cdot 10^{-6}$ & $5.03 \cdot 10^{-6}$ & $5.57 \cdot 10^{-6}$ \\
\hline
$n = 100$, $r = 9$ & $2.01 \cdot 10^{-6}$ & $3.84 \cdot 10^{-6}$ & $3.30 \cdot 10^{-6}$ & $2.71 \cdot 10^{-6}$ & $3.11 \cdot 10^{-6}$ \\
\hline
\end{tabular}
\end{center}
\end{table}

Table \ref{tab:t2} shows how the algorithms approximate $A_2$.
One can see that the best results are given by {\myfont maxvol-proj}. 
Truncated $SVD$ after {\myfont maxvol} of higher rank $n = 2r$ is much less useful than for $A_1$. 
In this case simple {\myfont maxvol} also performs better. 
However, its error still grows with size and rank. 
For {\myfont maxvol-proj}, the error never exceeds the $SVD$-based error more than $1.5$ times.

It is also worth noting that the use of Truncated $SVD$ still improves the accuracy: the direct application of {\myfont maxvol} of rank $2r$ gives a more significant error. 
Thus, for the slowly decaying singular values, the use of {\myfont maxvol} of higher rank may not yield any advantages.
To improve accuracy, one must reduce the rank of the approximation. 

If one doesn't know in advance that singular values quickly decrease, it can still be checked in the constructed approximation, and the rank can be reduced the same way after {\myfont maxvol-proj}.

\section{Conclusion}

In general, low-rank approximations based on the maximum volume principle require very few operations and show high accuracy of the approximation. 

When we need to choose between the speed and the accuracy, we can use the algorithm {\myfont maxvol-proj}, its version without {\myfont Dominant-R}, or its simplifications. 
At the same time, the necessary rank can be estimated from above, starting with the Bebendorf algorithm \cite{Beb}. 
If necessary, one can reduce it by applying Truncated $SVD$ to the constructed cross approximation. 
In the case of a fast singular values decay, Truncated $SVD$ can be applied to an approximation of a larger rank, and thus significantly improve the accuracy (compared to the direct rank $r$ approximations). 
Sometimes, this makes it possible to achieve better accuracy without {\myfont maxvol-proj}. 
However, {\myfont maxvol-proj} is a universal method in the sense that the accuracy in the Frobenius norm does not depend (on average) on the distribution of singular values. 

The numerical experiments show that averaging over random matrices of left and right singular vectors gives the Frobenius norm error with the coefficient of the same order as in the $C$-norm. 
This fact is directly related to the Frobenius norm of pseudoinverse to some submatrices of unitary matrices. 
Moreover, the hypothesized coefficient upper bound (\ref{hyp-eq}) is similar to the best known upper bounds \cite{bestCUR} and with the lower bounds both for $n = r$ and for $n \to \infty$ \cite{CWexist, LowerBounds}.

\section{Appendix}
\appendix

\section{Full versions of algorithms}

Below we present the detailed algorithm descriptions and the derivations for some new update formulas. 
The justifications of the update formulas for {\myfont maxvol} \cite{maxvol} and $GEQR$ \cite{rrqr} are in the corresponding articles. References are given in the algorithm headings.
The update formula for {\myfont Dominant-C} is provided in the Algorithms section, Lemma \ref{bij-lem}.

We tried to formulate the algorithms to make the coding in any programming language as close as possible to simple copying. 

\paragraph*{Algorithm {\myfont maxvol} \cite{maxvol}}
\begin{algorithmic}[1]
\REQUIRE{Matrix $A \in \mathbb{C}^{M \times N}$, the starting sets of row indices $\mathcal{I}$ and column indices $\mathcal{J}$ of cardinality $r$. For example, $\mathcal{I} = \mathcal{J} = \{1, ..., r \}$.}
\ENSURE{The updated sets $\mathcal{I}$ and $\mathcal{J}$ corresponding to the rank-$r$ dominant submatrix.
}
\STATE $C := A_{:,\mathcal{J}} A_{\mathcal{I},\mathcal{J}}^{-1}$
\STATE $current\_order := \{1, \ldots, M\}$
\STATE $C.swap(\mathcal{I}, \{1, \ldots, r\}, current\_order)$
\COMMENT{$C.swap(A,B,order)$ swaps the elements of $C$, corresponding to indexes from $A$ and $B$ ($A_i$ swaps with $B_i$) and changes the order of corresponding indexes in $order$.
}
\STATE $\{i, j\}: = \mathop {\arg \max }\limits_{i,j} \left| {{C_{i,j}}} \right|$
\STATE $changed := $ \TRUE
\STATE $old\_changed := $ \TRUE
\WHILE{$changed$}
  \STATE $changed := $ \FALSE
  \FOR{$changes\_in$ \textbf{in} $\{\mathcal{I}, \mathcal{J}\}$}
    \STATE{}
    \COMMENT{Here, we assign the pointer, so no copies of the sets are made.
    }
    \WHILE{$\left| C_{i,j} \right| > 1$}
      \IF{$i = j$}
        \STATE \textbf{break}
      \ENDIF
      \STATE $C := C - C_{:,j} \left( C_{i,:} - e_j \right)/C_{i,j}$
      \STATE $C.swap(i,j,current\_order)$
      \STATE $changed := $ \TRUE
      \STATE $old\_changed := $ \TRUE
    \ENDWHILE
    \IF{((\NOT $changed$) \AND ($changes\_in = \mathcal{J}$))}
      \STATE $changed := $ \FALSE
      \STATE \textbf{break}
    \ENDIF
    \IF{((\NOT $old\_changed$) \AND ($changes\_in = \mathcal{I}$))}
      \STATE $changed := $ \FALSE
      \STATE \textbf{break}
    \ENDIF
    \STATE $changes\_in := current\_order[1..r]$
    \COMMENT{The numbering starts at 1.}    
    \IF{$changes\_in = \mathcal{J}$}
      \STATE $old\_changed := $ \FALSE
      \STATE $C := A_{:,\mathcal{J}} A_{\mathcal{I},\mathcal{J}}^{-1}$    
      \STATE $current\_order := \{1, \ldots, M\}$
      \STATE $C.swap(\mathcal{I}, \{1, \ldots, r\}, current\_order)$
    \ELSE
      \STATE $C := A_{\mathcal{I},:}^T A_{\mathcal{I},\mathcal{J}}^{-T}$    
      \STATE $current\_order := \{1, \ldots, N\}$
      \STATE $C.swap(\mathcal{J}, \{1, \ldots, r\}, current\_order)$
    \ENDIF
    \STATE $\{i, j\} := \mathop {\arg \max }\limits_{i,j} \left| {{C_{i,j}}} \right|$
  \ENDFOR
\ENDWHILE
\end{algorithmic}

\paragraph*{Householder-based {\myfont maxvol2} algorithm\\}

Here we derive and present a faster algorithm for greedy row selection. The idea is the same as in the original {\myfont maxvol2} \cite{maxvol2}, but the complexity is lower.

Let some $k$ rows be selected. Matrix $A \in \mathC^{M \times r}$ can be represented in the form
\[
A = \left[ {\begin{array}{*{20}{c}}
  {\hat A} \\ 
  {\tilde A} 
\end{array}} \right],\quad \hat A \in {\mathbb{C}^{k \times r}}\, - \,{\text{current submatrix.}}
\]
For the fast update it is required to know the 2-norms of the rows from $\tilde A \hat A^+ \in \mathC^{(M-k) \times k}$. This matrix is of rank $r \leqslant k$, and therefore can be represented in the form
\[
\tilde A{{\hat A}^+ } = \tilde A M^{-1} Q,\quad Q^* M = \hat A,\quad Q \in {\mathbb{C}^{r \times k}},\quad Q{Q^ * } = I, \quad M \in \mathC^{r \times r}.
\]
For $k = r$, we can take $Q = I$. Next, we need to recalculate the matrix
\[
C = \tilde A M^{-1}  \in {\mathbb{C}^{M \times r}}.
\]
Only the permutations of the rows occur in the matrix $\tilde A$, so let's concentrate on the matrix $M$ recalculation. By appending the $i$-th row to the matrix $\hat A$, we obtain the matrix $\hat A' \in \mathC^{(k+1) \times r}$ of the form
\[
\hat A' = \left[ {\begin{array}{*{20}{c}}
  {{Q^ * }} \\ 
  {{C_{i,:}}} 
\end{array}} \right]M.
\]
Matrix $\left[ {\begin{array}{*{20}{c}}
  {{Q^ * }} \\ 
  {{C_{i,:}}} 
\end{array}} \right] \in \mathC^{(k+1) \times r}$ can be made unitary with the aid of the Householder reflector $H \mathC^{r \times r}$ such that
\[
{C_{i,:}}H = \left[ {\begin{array}{*{20}{c}}
  {{{\left\| {{C_{i,:}}} \right\|}_2}}&{\begin{array}{*{20}{c}}
  0& \ldots &0 
\end{array}} 
\end{array}} \right] = \left[ {\begin{array}{*{20}{c}}
  {\sqrt {{l_i}} }&{\begin{array}{*{20}{c}}
  0& \ldots &0 
\end{array}} 
\end{array}} \right]
\]
and through normalization of the first column by the matrix
\[D = diag(\frac{1}{{\sqrt {1 + {l_i}} }},1, \ldots ,1) \in \mathC^{r \times r}.
\]
Eventually
\[
\begin{gathered}
  \hat A' = \left[ {\begin{array}{*{20}{c}}
  {{Q^ * }} \\ 
  {{C_{i,:}}} 
\end{array}} \right]HD{D^{ - 1}}HM = Q'{D^{^{ - 1}}}HM = Q'M', \hfill \\
  {\left( {M'} \right)^{ - 1}} = {M^{ - 1}}HD, \hfill \\
  C' = \tilde A'{\left( {M'} \right)^{ - 1}} = \tilde A'{M^{ - 1}}HD, \hfill \\ 
\end{gathered} 
\]
that is, we only need to rearrange the columns, make the reflection and multiply the first column by a number. The change in the 2-norm of the rows occurs only at the last multiplication and can easily be calculated in terms of the 2-norm of the first column.

\begin{algorithm}[H]
\caption{Householder-based {\myfont maxvol2}}\label{hmaxvol2-alg}
\begin{algorithmic}[1]
\REQUIRE{Matrix $A \in \mathbb{C}^{M \times r}$, the starting set of row indices $\mathcal{I}$ of cardinality $r$, the required final size $n$.}
\ENSURE{$\mathcal{I}$, supplemented by $n-r$ row indices chosen greedily to maximize the volume.}
\STATE $C := A [A_{\mathcal{I},:}^{-1}, 0_{r \times (n-r)}]$
\STATE $current\_order := \{1, \ldots, M\}$
\STATE $C.swap(\mathcal{I}, \{1, \ldots, r\}, current\_order)$
\STATE $C := C_{\{r+1, \ldots, M\},:}$
\STATE $l := 0_{M-r}$
\FOR{$i := 1$ \TO $M-r$}
  \STATE $l_i := \| C_{i+r,:} \|_2^2$
\ENDFOR
\FOR{$new\_size := 1$ \TO $n-r$}
  \STATE $i := \mathop {\arg \max }\limits_i {l_i}$
  \STATE $current\_order.swap(new\_size+r,i+r)$
  \STATE $l_i' := 1 + l_i$
  \STATE $l_i := l_{new\_size}$
  \STATE $l_{new\_size} := 0$
  \STATE $C_I := C_{i,:}$
  \STATE $\tau, v := Householder(C_I)$
  \STATE $C_{i,:} := C_{new\_size,:}$
  \STATE $C_{new\_size,:} := 0_r$
  \STATE $C := C - \tau \left( C \left[ {\begin{array}{*{20}{c}}
  1 \\ 
  v 
\end{array}} \right] \right) \left[ {\begin{array}{*{20}{c}}
  1&{{v^ * }} 
\end{array}} \right]$
  \FOR{$j := new\_size$ \TO $M$}
    \STATE $l_j := l_j - \left( 1 - \frac{1}{l_i'} \right) |C_{j,1}'|^2$
  \ENDFOR
  \STATE $C_{:,1} := C_{:,1} / \sqrt{l_i'}$
\ENDFOR
\STATE $\mathcal{I} := current\_order[1..n]$
\end{algorithmic}
\end{algorithm}

\paragraph*{Algorithm {\myfont Dominant-C}}
\begin{algorithmic}[1]
\REQUIRE{Matrix $A \in \mathbb{C}^{M \times r}$, the starting set of row indices $\mathcal{I}$ of cardinality $n$. For example, $\mathcal{I} = \{1, ..., n \}$.}
\ENSURE{The updated set $\mathcal{I}$  corresponding to a dominant rectangular submatrix.}
\STATE $C := A A_{\mathcal{I},:}^+$
\STATE $current\_order := \{1, \ldots, M\}$
\STATE $C.swap(\mathcal{I}, \{1, \ldots, n\}, current\_order)$
\STATE $l := 0_M$
\FOR{$i := 1$ \TO $M$}
  \STATE $l_i := \| C_{i,:} \|_2^2$
\ENDFOR
\STATE $B := 0_{M \times n}$
\FOR{$i := n+1$ \TO $M$}
  \FOR{$j := 1$ \TO $n$}
    \STATE $B_{i,j} := |C_{i,j}|^2 + (1 + l_i) (1 - l_j)$
  \ENDFOR
\ENDFOR
\STATE \STATE $\{i,j\} := \mathop {\arg \max }\limits_{i,j} {B_{i,j}}$
\WHILE{$B_{i,j} > 1$}
  \STATE $C_I' := C_{i,:}^* / (1 + l_i)$
  \STATE $C' := C C_I'$
  \FOR{$k := 1$ \TO $M$}
    \STATE $l_k' := l_k - |C_k'|^2 (1 + l_i)$
  \ENDFOR
  \STATE $C := C - C' C_{i,:}$
  \STATE $C.swap(i,j,current\_order)$
  \STATE $C'.swap(i,j)$
  \STATE $l'.swap(i,j)$
  \STATE $swap(C_{:,j},C')$
  \FOR{$k := 1$ \TO $M$}
    \STATE $l_k := l_k' + |C_k'|^2 / (1 - l_i')$
  \ENDFOR
  \STATE $C := C + C' (C_{i,:} / (1 - l_i'))$
  \FOR{$i := n+1$ \TO $M$}
    \FOR{$j := 1$ \TO $n$}
      \STATE $B_{i,j} := |C_{i,j}|^2 + (1 + l_i) (1 - l_j)$
    \ENDFOR
  \ENDFOR
  \STATE $\{i,j\} := \mathop {\arg \max }\limits_{i,j} {B_{i,j}}$
\ENDWHILE
\STATE $\mathcal{I} := current\_order[1..n]$
\end{algorithmic}

\paragraph*{Algorithm {\myfont Dominant-R} (modified $RRQR$ from \cite{rrqr})\\}
As in \cite{rrqr}, the matrix sizes are denoted by $m$ and $n$. 
The approximation rank is $k$. In our case $n \gg m$, while \cite{rrqr} considers the case $m \geqslant n$. A small value of $n$ allows us not to worry about the update of $Q \in \mathbb{R}^{m \times r}$, and not to store it as a product of reflections. 

Some changes in the notation:

vector $\gamma$ contains \textbf{squared lengths} of the columns of $C^T$.

vector $\omega$ contains \textbf{squared lengths} of the rows of $A^{-1}$.

\begin{algorithmic}[1]
\REQUIRE{Matrix $M \in \mathbb{R}^{m \times n}$, the starting set of column indices $\mathcal{I}$ of cardinality $r$, threshold parameter $f \geqslant 1$. For example, $\mathcal{I} = \{1, ..., r \}$.}
\ENSURE{The updated set $\mathcal{I}$, corresponding to the dominant rectangular submatrix.}
\STATE $current\_order := \{1,...,n\}$
\STATE $M.swap\_columns(\mathcal{I}, \{1,...,k\}, current\_order)$ \COMMENT{the same as $swap$, but for columns.}
\STATE $Q, A := QR(M_{:,\mathcal{I}})$
\STATE $\gamma := 0_{n-k}$
\FOR{$i := 1$ \TO $n-k$}
  \STATE $\gamma_i := \| M_{:,i+k} \|_2^2$
\ENDFOR
\STATE $B := Q_{:,k}^T M$
\FOR{$i := 1$ \TO $n-k$}
  \STATE $\gamma_i := \gamma_i - \| B_{:,i+k} \|_2^2$
\ENDFOR
\STATE $\omega := 0_{k}$
\FOR{$i := 1$ \TO $k$}
  \STATE $\omega_i := \| A_{i,:}^{-1} \|_2^2$
\ENDFOR
\STATE $A^{-1}B := A^{-1} \cdot B$
\STATE $m := \omega^T \gamma$
\FOR{$i := 1$ \TO $k$}
  \FOR{$j := 1$ \TO $n-k$}
    \STATE $m_{i,j} := m_{i,j} + | (A^{-1}B)_{i+k,j+k}|^2$
  \ENDFOR
\ENDFOR
\STATE $\{i1,j1\} := \mathop {\arg \max }\limits_{i1,j1} {m_{i1,j1}}$
\STATE $\rho := m_{i1,j1}$
\item[]
\WHILE{$\rho > f$}
  \item[]
  \item[] \COMMENT{Exchange of $i1$ and $k$}
  \FOR{$i := i1$ \TO $k-1$}
    \STATE $\omega.swap(i,i+1)$
    \STATE $(A^{-1}B).swap\_columns(i,i+1)$
    \STATE $A.swap(i,i+1)$
    \STATE $M.swap\_columns(i,i+1,current\_order)$
  \ENDFOR
  \FOR{$i := i1$ \TO $k-1$}
    \STATE $\alpha, \beta := givens(A_{i,i}, A_{i+1,i})$ \COMMENT{see \cite{hqr}, page 240}
    \STATE $A_{i:i+1,i:k} := \left[ {\begin{array}{*{20}{c}}
  \alpha &-\beta \\ 
  \beta &\alpha  
\end{array}} \right] A_{i:i+1,i:k}$
    \STATE $Q_{:,i:i+1} := Q_{:,i:i+1} \left[ {\begin{array}{*{20}{c}}
  \alpha &\beta \\ 
  -\beta &\alpha  
\end{array}} \right]$
  \ENDFOR
  \item[]
  \item[] \COMMENT{Exchange $k, k+1, i1$ => $j1, k, k+1$}
  \STATE $b_c := M_{:,j1}$
  \STATE $M.swap\_columns(j1,k+1,current\_order)$
  \STATE $\gamma.swap(j1, k+1)$
  \STATE $(A^{-1}B).swap\_columns(j1, k+1)$
  \STATE $c_1^T := A_{k,k} (A^{-1}B)_{k,k+2:n}$ \label{Bre1}
  \STATE $A_{:,k} := A (A^{-1}B)_{:,k+1}$ \label{Bre2}
  \STATE $\gamma \nu := \sqrt{c_{k+1}}$ \label{Cre1}
  \STATE $\gamma := A_{k,k}$
  \STATE $b_1 := A_{1:k-1,k}$
  \STATE $q := (b_c - M_{:,1:k} (A^{-1}B)_{:,k+1})/(\gamma \nu)$ \label{Qre}
  \STATE $M.swap\_columns(k,k+1,current\_order)$
  \STATE $c_2^T := q^T M_{:,k+2:n}$ \label{Cre3}
  \item[]
  \item[] \COMMENT{Nullification of $k$-th column under the diagonal}
  \STATE $\alpha, \beta := givens(A_{k,k}, \gamma v)$
  \STATE $\left[ {\begin{array}{*{20}{c}}
  {\gamma \mu /\rho }&{\bar c_1^T} \\ 
  {\gamma \nu /\rho }&{\bar c_2^T} 
\end{array}} \right] := \left[ {\begin{array}{*{20}{c}}
  \alpha &-\beta \\ 
  \beta &\alpha  
\end{array}} \right] \left[ {\begin{array}{*{20}{c}}
  \gamma &{c_1^T} \\ 
  0&{c_2^T} 
\end{array}} \right]$
  \STATE $A_{k,k} := \sqrt{(\gamma \nu)^2 + A_{k,k}^2}$
  \item[]
  \item[] \COMMENT{Update of $\gamma$}
  \STATE $\gamma_{k+1} := (\gamma \nu / \rho)^2$
  \FOR{$i := k+2$ \TO $n$}
    \STATE $\gamma_i := \gamma_i + |\bar c_2^T{}_{i-k-1}|^2 - |c_2^T{}_{i-k-1}|^2$
  \ENDFOR
  \item[]
  \item[] \COMMENT{Update of $\omega$}
  \STATE $u := A_{k-1,k-1}.RTsolve(b_1)$ \COMMENT{Solving the system with the upper triangular matrix}
  \STATE $\omega_k := 1/A_{k,k}^2$
  \STATE $\mu := (A^{-1}B)_{k,k+1}$
  \STATE $\bar \mu := (A^{-1}B)_{1:rank-1,rank+1} + \mu u$
  \FOR{$i := 1$ \TO $k-1$}
    \STATE $\omega_i := \omega_i + \omega_k |\bar \mu_i|^2 - |u_i / \gamma|^2$
  \ENDFOR
  \item[]
  \item[] \COMMENT{Update of $A^{-1}B$}
  \STATE $u_1 := (A^{-1}B)_{1:k-1,k+1}$
  \STATE $(A^{-1}B)_{k,k+1} := (\gamma \mu / \rho) / A_{k,k}$
  \STATE $(A^{-1}B)_{k,k+2:n} := \bar c_1^T / A_{k,k}$
  \STATE $(A^{-1}B)_{1:k-1,k+1} := (1 - \mu (A^{-1}B)_{k,k+1})u - (A^{-1}B)_{k,k+1} u_1$
  \STATE $(A^{-1}B)_{1:k-1,k+2:n} := (A^{-1}B)_{1:k-1,k+2:n} + u(c_1^T/\gamma - \mu \bar c_1^T / A_{k,k}) - u_1 \bar c_1^T / A_{k,k}$
  \item[]
  \STATE $m := \omega^T \gamma$
  \FOR{$i := 1$ \TO $k$}
    \FOR{$j := 1$ \TO $n-k$}
      \STATE $m_{i,j} := m_{i,j} + | (A^{-1}B)_{i+k,j+k}|^2$
    \ENDFOR
  \ENDFOR
  \STATE $\{i1,j1\} := \mathop {\arg \max }\limits_{i1,j1} {m_{i1,j1}}$
  \STATE $\rho := m_{i1,j1}$
\ENDWHILE
\STATE $\mathcal{I} := current\_order[1..k]$
\end{algorithmic}

This is a modification of $GEQR$ \cite{rrqr} without recalculation of the matrices $Q \in \mathbb{R}^{m \times r}$, $B \in \mathbb{R}^{r \times (n-r)}$ and $C^T \in \mathbb{R}^{(m-r) \times (n-r)}$.
Thus in the complexity estimate of a single iteration, the coefficient of $mn$ is reduced by two times.

All notations and updates are the same as in the original article \cite{rrqr}. 
We focus only on the main changes: removing the updates of $B$ and $C$.

Getting rid of updating $B$ is quite simple: we need only the first row and the last column of $B$, which are easily computed through $A \in \mathbb{R}^{r \times r}$ and $A^{-1}B  \in \mathbb{R}^{r \times (n-r)}$ (lines \ref{Bre1} and \ref{Bre2}).

For the $C^T$, we only need to know the first row. 
Since unitary transformations were performed in $GEQR$ \cite{rrqr} for the matrix $C^T$, the first element of this row is equal to the 2-norm of the first column (line \ref{Cre1}).
The other elements are obtained as scalar products with the $k+1$-st column of $Q$ ($q$) (\ref{Cre3}).

To rotate, we still need to know $q = Q_{:,k+1} \in \mathbb{R}^m$ (line \ref{Qre}). 
It is easily obtained from the column $b_c \in \mathbb{R}^m$ of $M \in \mathbb{R}^{m \times n}$ as a product of $Q$ by the last column of the extension of~$A$:
\[{b_c} = \left[ {\begin{array}{*{20}{c}}
  {{Q_{:,1:k}}}&{{Q_{:,k + 1}}} 
\end{array}} \right]\left[ {\begin{array}{*{20}{c}}
  {A_{:,k}} \\ 
  {\gamma \nu } 
\end{array}} \right].\]
Then we use the equality
\[
Q_{:,1:k} A_{:,k} = M_{:,1:k} (A^{-1} B)_{:,{k+1}}
\]

Compared to $GEQR$ \cite{rrqr} we save one product of vectors of size $m-k$ and $n-k$ and one addition with $(m-k) \times (n-k)$ matrix. 
Update of $C$ also contains a matrix by vector multiplication. In {\myfont Dominant-R} we multiply the $m \times (n-k)$ matrix by a vector.

We can apply the same approach to append the columns. 
We do not repeat the derivation; the description of the original algorithm is in \cite{rrqr}. 
Here the benefit is even greater if we use $A^{-l} B$ later in {\myfont Dominant-R} or in {\myfont maxvol}. 

As before, $k = 1$ is not considered separately, although many operations disappear in this case. 
The maximum $k$ is denoted by $r$.

\paragraph*{Algorithm {\myfont pre-maxvol} (modification of the column addition in $RRQR$ from \cite{rrqr})}
\begin{algorithmic}[1]
\REQUIRE{Matrix $M \in \mathbb{R}^{m \times n}$, the required rank $r$.}
\ENSURE{Set of column indices $\mathcal{I}$ of cardinality $r$,  corresponding to the submatrix, whose volume differs from the maximum by no more than $r!$ times.}
\STATE $\mathcal{I} := \emptyset$
\STATE $A^{-1}B := 0_{r \times n}$
\STATE $\gamma := 0_n$
\FOR{$i := 1$ \TO $n$}
  \STATE $\gamma_i := \| M_{:,i} \|_2^2$
\ENDFOR
\STATE $j := \mathop {\arg \max }\limits_j {\gamma_j}$
\FOR{$k := 1$ \TO $r$}
  \STATE $\mathcal{I} := \mathcal{I} \cup \{j\}$
  \item[]
  \item[] \COMMENT{Exchange of $j$ and $k$}
  \STATE $b_c := M_{:,j}$
  \STATE $M.swap\_columns(j,k,current\_order)$
  \STATE $\gamma.swap(j, k)$
  \STATE $(A^{-1}B).swap\_columns(j, k)$
  \STATE $\gamma \nu := \sqrt{c_k}$
  \STATE $q := (b_c - M_{:,1:k-1} (A^{-1}B)_{1:k-1,k})/(\gamma \nu)$
  \STATE $c_2^T := q^T M_{:,k+1:n}$
  \item[]
  \item[] \COMMENT{Update of $\gamma$}
  \STATE $\gamma_k := 0$
  \FOR{$i := k+1$ \TO $n$}
    \STATE $\gamma_i := \gamma_i - \left|(c_2^T)_{i-k} \right|^2$
  \ENDFOR
  \item[]
  \item[] \COMMENT{Update of $A^{-1}B$}
  \STATE $(A^{-1}B)_{k,k+1:n} := c_2^T / A_{k,k}$
  \STATE $(A^{-1}B)_{1:k-1,k+1:n} := (A^{-1}B)_{1:k-1,k+1:n} - (A^{-1}B)_{1:k-1,k+1} (A^{-1}B)_{k,k+1:n}$
  \item[]
  \STATE $j := \mathop {\arg \max }\limits_j {\gamma_j}$
\ENDFOR
\end{algorithmic}

\section{Acknowledgments}
The work was supported by the Russian Science Foundation, Grant 14-11-00806.

\par\bigskip\noindent

\end{document}